\documentclass{proc-l}

\usepackage{epsfig}
\usepackage{epstopdf}
\usepackage{graphicx}
\usepackage[leqno]{amsmath}
\usepackage{amssymb}
\usepackage{amsmath}
\usepackage{psfrag}
\usepackage{rotating}
\usepackage[usenames]{color}
\usepackage{multirow}
\usepackage{tikz}
\usetikzlibrary{cd}
\usetikzlibrary{calc}
\usepackage{verbatim}
\usepackage{multicol}

\usepackage{soul}

\newtheorem{theorem}{Theorem}[section]
\newtheorem{lemma}[theorem]{Lemma}

\theoremstyle{definition}
\newtheorem{definition}[theorem]{Definition}
\newtheorem{example}[theorem]{Example}

\theoremstyle{remark}
\newtheorem{remark}[theorem]{Remark}

\numberwithin{equation}{section}

\newtheorem{proposition}[theorem]{Proposition}
\newtheorem{corollary}[theorem]{Corollary}

\newcommand{\R}{\mathbb{R}}
\newcommand{\Z}{\mathbb{Z}}
\newcommand{\N}{\mathbb{N}}

\newcommand{\p}{\varphi}

\newcommand{\eps}{\varepsilon}
\newcommand{\dgm}{\mathrm{Dgm}}

\newcommand{\newatop}[2]{\genfrac{}{}{0pt}{1}{#1}{#2}}


\begin{document}

\title[The coherent matching distance in 2D persistent homology]{On the geometrical properties of the coherent matching distance in 2D persistent homology}

\author{Andrea Cerri}
\address{IMATI -- CNR, Genova, Italia}
\curraddr{}
\email{andrea.cerri@ge.imati.cnr.it}

\author{Marc Ethier}
\address{Facult\'e des Sciences, Universit\'e de Saint-Boniface, Winnipeg, Manitoba, Canada}
\curraddr{}
\email{methier@ustboniface.ca}

\author{Patrizio Frosini}
\address{Dipartimento di Matematica, Universit\`a di Bologna, Italia}
\curraddr{}
\email{patrizio.frosini@unibo.it}

\subjclass[2010]{Primary 55N99; Secondary 68U05 65D18}

\date{}

\dedicatory{}


\begin{abstract}
In this paper we study a new metric for comparing Betti numbers functions in bidimensional persistent homology, based on coherent matchings, i.e. families of matchings that vary in a continuous way. We prove some new results about this metric, including a property of stability. In particular, we show that the computation of this distance is strongly related to suitable filtering functions associated with lines of slope $1$, so underlining the key role of these lines in the study of bidimensional persistence. In order to prove these results, we introduce and study the concepts of extended Pareto grid for a normal filtering function as well as of transport of a matching. As a by-product, we obtain a theoretical framework for managing the phenomenon of monodromy in 2D persistent homology.
\end{abstract}

\maketitle


\section*{Introduction}
The classical approach to persistent homology is based on the study of the homological changes of the sublevel sets $X_{f\preceq w}$ of a topological space $X$ filtered by means of a continuous function $f:X\to \R^m$, when $w$ varies in $\R^m$. This theory is interesting both from the theoretical and applicative point of view, since the function $f$ can be used to describe both topological properties of $X$ and data defined on this space. A description of persistent homology and its use can be found in \cite{EdMo13}.

The case $m=2$ is intrinsically more difficult to study than the case $m=1$ and calls for the development of new mathematical ideas and methods. One of these methods consists in a reduction from the $2$-dimensional to the $1$-dimensional case by means of a family of functions $f_{(a,b)}:X\to\R$, with $a\in\ ]0,1[$ and $b\in\R$ (cf.~\cite{CeDi*13}), defined by setting $f_{(a,b)}(x):=\max\left\{\frac{f_1(x)-b}{a},\frac{f_2(x)+b}{1-a}\right\}$. Each pair $(a,b)$ identifies the positive slope line $r_{(a,b)}$  in $\R^2$ defined by the parametric equation $(u,v)=(at+b,(1-a)t-b). $
The function  $f_{(a,b)}$ allows one to represent the set $\{x\in X:f(x)\preceq (u,v)\}$ as the set $\{x\in X:f_{(a,b)}(x)\le t\}$, which describes a $1$-dimensional filtration of $X$ for $t$ varying in $\R$. For technical reasons, we normalize the function $f_{(a,b)}$ by setting $f^*_{(a,b)}(x):=\min\{a,1-a\}\cdot f_{(a,b)}(x)$.
In plain words, the previous 1D filtration associated with the function $f^*_{(a,b)}$ is obtained by
projecting $X$ to the plane $\R^2$ by means of $f$ and considering for each $p\in r_{(a,b)}$ the subset $X_p\subseteq X$ given by the points staying on the bottom left of $p$ (see Figure~\ref{1Dfilt}). It is well-known that in each degree $k$ the collection of the 1D Betti numbers functions associated with the 1D filtrations defined by the filtering functions $f^*_{(a,b)}$ is equivalent to the 2D Betti numbers function of $f$ \cite{CeDi*13}.

\begin{figure}[ht]
\begin{center}
\begin{tikzpicture}[scale=1.0]
\draw [->,line width=1.1] (0,-0.3) -- (0,5.3);
\draw [->,line width=1.1] (-0.3,0) -- (7.3,0);
\draw (7.0,-0.25) node {$f_1$};
\draw (-0.25,5.0) node {$f_2$};

\newcommand{\potato}{plot [smooth cycle] coordinates {(3.5,0.5) (5,1.0) (6,2) (5.5,3.5) (3.5,4.0) (2.5,5.0) (0.8,4.9) (0.4,3.5) (1.0,1.0)}}

\draw [black,fill=yellow] \potato;

\begin{scope}
	\clip \potato;
	\fill [color=blue!0.5!cyan] (0.2,0.2) rectangle (4.5,3.55);
\end{scope}
\draw \potato;

\draw [black,fill=white] (3.0,2.5) ellipse (0.8 and 0.4);

\draw (0.4,0.4) -- (6.0,4.7);
\draw (6.1,4.3) node {$r_{(a,b)}$};

\draw (4.5,3.55) node {$\bullet$};

\draw (0.2,3.55) -- (4.5,3.55) -- (4.5,0.2);
\draw (4.75,3.5) node {$p$};
\draw (3.5,1.0) node {$X_p$};
\draw (5.75,1.2) node {$X$};
\end{tikzpicture}
\end{center}
\caption{The 1D filtration $\{X_p\}_{p\in r_{(a,b)}}$ defined by the function $f^*_{(a,b)}$.
The light blue set $X_p$ is the sublevel set associated with the value $p$.}
\label{1Dfilt}
\end{figure}
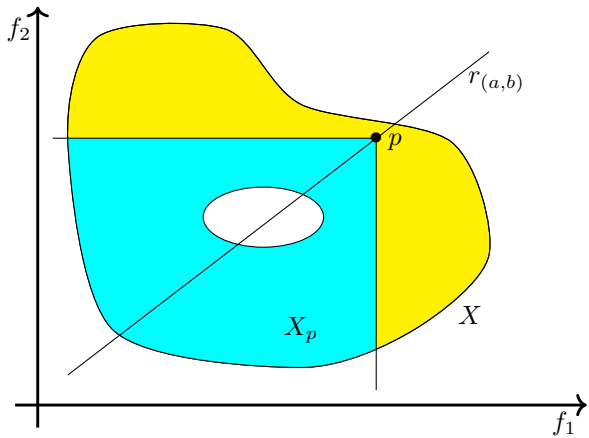

After fixing $k\in \N$, each $1$-dimensional filtration associated with the function $f^*_{(a,b)}$ defines a persistence diagram $\dgm\left(f_{(a,b)}^*\right)$, which is the set of pairs $(b_i,d_i)$ describing the time of birth $b_i$ and the time of death $d_i$ of the $i$th homological class in degree $k$ along the filtration associated with $f^*_{(a,b)}$. If two filtering functions $f,g:X\to\R^2$ are given, a common way to compare the two collections $\left\{\dgm\left(f_{(a,b)}^*\right)\right\}_{(a,b)\in ]0,1[\times\R}$ and $\left\{\dgm\left(g_{(a,b)}^*\right)\right\}_{(a,b)\in ]0,1[\times\R}$ consists in computing the supremum of the classical bottleneck distance between the persistence diagrams $\dgm\left(f_{(a,b)}^*\right)$ and $\dgm\left(g_{(a,b)}^*\right)$ over $(a,b)$. This idea leads to a metric $D_{\mathrm{match}}$ between the aforementioned families of persistence diagrams (cf. \cite{BiCe*11,CeDi*13}). We observe that, in principle, a small change of the pair $(a,b)$ can cause a large change in the ``optimal'' matching, that is, the matching realizing the bottleneck distance between $\dgm\left(f_{(a,b)}^*\right)$ and $\dgm\left(g_{(a,b)}^*\right)$. In other words, the definition of $D_{\mathrm{match}}$ is based on a family of optimal matchings that is not required to change continuously with respect to the pair $(a,b)$.

Experiments concerning the computation of this distance $D_{\mathrm{match}}$ reveal an interesting phenomenon, consisting of the fact that many examples exist where the supremum defining $D_{\mathrm{match}}(f,g)$ is taken for lines $r_{(a,b)}$ with $a\approx 1/2$. Figure~\ref{fattostrano} illustrates two of these examples.

\begin{figure}[ht]
\includegraphics[width=\textwidth]{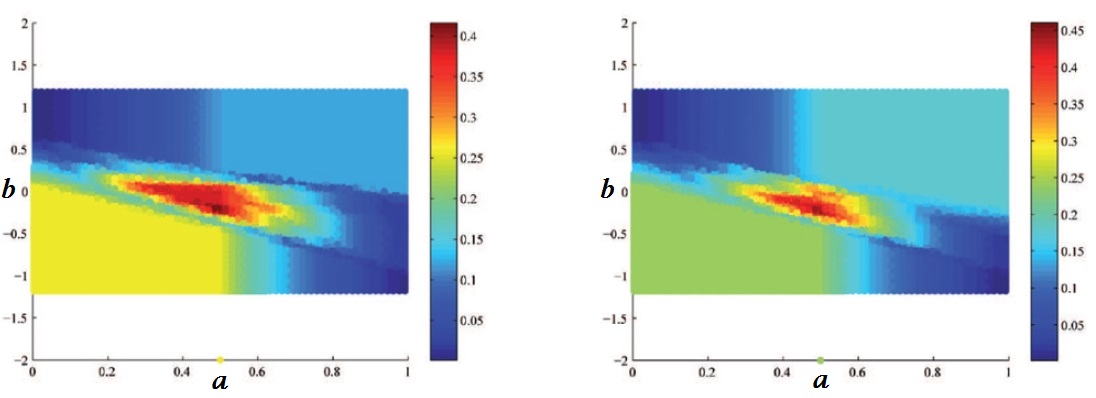}
\caption{The bottleneck distance between the persistence diagrams $\dgm\left(f_{(a,b)}^*\right)$ and $\dgm\left(g_{(a,b)}^*\right)$ for two different pairs $(f,g)$ of functions from $S^2$ to $\R^2$, represented as a function of $(a,b)$. The colors correspond to the value of the bottleneck distance at each point $(a,b)$, with red meaning higher values and blue, lower values. We can observe that the maximum value is taken at a point $\left(\bar a,\bar b\right)$ with $\bar a\approx 1/2$. More details about the considered functions can be found in \cite{BiCe*11}.}
\label{fattostrano}
\end{figure}

A natural question arises: Does the property illustrated in those examples always hold for the distance $D_{\mathrm{match}}$?

Unfortunately, we are not able to directly answer this question, because of the lack of geometrical properties in the definition of $D_{\mathrm{match}}$. Furthermore, we observe that
 while the metric $D_{\mathrm{match}}$ is rather simple to define and approximate by considering a suitable family of filtering functions associated with lines having positive slope, it has two main drawbacks. First, it forgets the natural link between the homological properties of filtrations associated with lines that are close to each other. As a consequence, part of the interesting homological information is lost. Second, its intrinsically discontinuous definition makes studying its properties difficult.

 For these reasons, in the previous paper \cite{CeEtFr16} we have introduced a new matching distance between 2D persistence diagrams (i.e. families of persistence diagrams associated with the lines $r_{(a,b)}$ as $(a,b)$ changes), called \emph{coherent matching distance} and based on matchings that change ``coherently'' with the filtrations we take into account. In other words, the basic idea consists of considering only matchings between the persistence diagrams $\dgm\left(f_{(a,b)}^*\right)$ and $\dgm\left(g_{(a,b)}^*\right)$ that change continuously with respect to the pair $(a,b)$. This requirement is both natural and useful, and this paper is devoted to the exploration of its main consequences.

First of all, the idea of ``coherent matching'' leads to the discovery of an interesting phenomenon of monodromy. We observe that when we require that the matchings change continuously, we have to avoid the pairs $(a,b)$ at which the persistence diagram contains double points, called \emph{singular pairs}. This is done by choosing a connected open set $U$ of regular (non-singular) pairs in the parameter space, and assuming that $(a,b)\in U$. In doing this, we can preserve the ``identity'' of points in the persistence diagram and follow them when we move in the parameter space. From this easily arises the concept of a family of matchings that is continuously changing. Interestingly, turning around a singular pair can produce a permutation in the considered persistence diagram, so that the considered filtering function is associated with a monodromy group.
A basic example of this monodromy phenomenon can be found by taking the filtering function $f=(f_1,f_2):X=\R^2\to \R^2$
with
$f_1(x,y)=x$, and
$$f_2(x,y) = \left\{
\begin{array}{ll}
-x & \mbox{ if } y=0\\
-x+1 & \mbox{ if } y=1\\
-2x & \mbox{ if } y=2\\
-2x+\frac{5}{4} & \mbox{ if } y=3\\
\end{array}\right.,$$
$f_2(x,y)$ then being extended linearly for every $x$ on the segments respectively joining $(x,0)$ with $(x,1)$, $(x,1)$ with $(x,2)$, and $(x,2)$ to $(x,3)$. On the half-lines $\{(x,y)\in \R^2: y<0\}$ and $\{(x,y)\in \R^2: y>3\}$, $f_2$ is then being taken with constant slope $-1$ in the variable $y$. The graph of $f_2$ is shown in Figure~\ref{figmon1}.

\begin{figure}
\begin{center}
\includegraphics[width=0.9\textwidth]{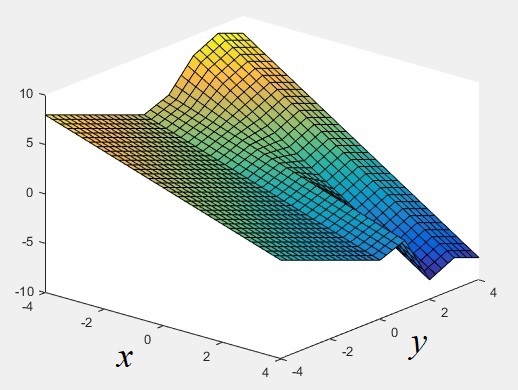}
\end{center}
\caption{The graph of $f_2$ in our basic example of monodromy for 2D persistent homology.}
\label{figmon1}
\end{figure}

The persistence diagram in degree $0$ of the function $f_{\left(1/4,0\right)}^*$ contains a double point, so that $\left(1/4,0\right)$ is a singular pair for $f$ in degree $0$. If we move around the point $(1/4,0)$ in the parameter space, we can see that two points of the persistence diagram $\dgm\left(f_{(a,b)}^*\right)$ exchange their position. For more details about this example we refer the interested reader to the paper~\cite{CeEtFr13}. We can easily adapt this example and get a smooth filtering function defined on a smooth closed manifold, revealing a similar phenomenon of monodromy.

As a consequence, our definition of ``coherent matching'' must take a monodromy group into account.
This is done in our paper by defining a transport operator $T^{(f,g)}_\pi$, which continuously transports each matching $\sigma_{(a,b)}$ between
the persistence diagrams $\dgm\left(f_{(a,b)}^*\right)$, $\dgm\left(g_{(a,b)}^*\right)$ to a matching $\sigma_{(a',b')}$
between
the persistence diagrams $\dgm\left(f_{(a',b')}^*\right)$, $\dgm\left(g_{(a',b')}^*\right)$ along a path $\pi$ from $(a,b)$ to $(a',b')$ in the set $U$. The existence of monodromy implies that the transport of $\sigma_{(a,b)}$ does not depend only on the pairs $(a,b)$, $(a',b')$ but also on the path $\pi$ we consider.

 By introducing the transport operator $T^{(f,g)}_\pi$, we can define the coherent cost
 $\mathrm{cohcost}\left(\sigma_{(a,b)}\right)$
 as the supremum of the classical cost of the matchings that we can obtain from $\sigma_{(a,b)}$ by means of every possible transport operator $T^{(f,g)}_\pi$ over $\pi$.

This done, the definition of the coherent matching distance $CD_U$ is straightforward: If two filtering functions $f,g:X\to\R^2$ are given and $U$ does not contain their singular pairs in the considered degree $k$, $CD_U(f,g)$  is the infimum of the coherent costs of the matchings between the sets $\dgm\left(f_{(a,b)}^*\right)$ and $\dgm\left(g_{(a,b)}^*\right)$ computed with respect to the chosen degree $k$, for a pair $(a,b)$ arbitrarily fixed. We also prove that this definition does not depend on the choice of $(a,b)$.

A key point in our paper consists in proving that the function $\mathrm{cost}\left(T^{(f,g)}_\pi\left(\sigma_{(a,b)}\right)\right)$ takes its global maximum over $\pi$ when the endpoint $\pi(1)$ of $\pi$ belongs to the vertical line $a=1/2$ or to the boundary of $U$ (Theorem~\ref{finalth} in Section~\ref{proofconjecture}). This result follows from the \emph{maximum principle for the coherent transport} (Theorem~\ref{maxprinc}) and casts new light on the abundance of examples where the supremum defining the classical distance $D_{\mathrm{match}}$ is taken for lines $r_{(a,b)}$ with $a\approx 1/2$.
In our opinion, the previous result can be seen as a strong signal that the coherent matching distance $CD_U$ should be preferred to the classical matching distance $D_{match}$ both in theory and applications,
since its use allows one to manage the parameter space $]0,1[\times\R$ more efficiently.
We observe that the value $\bar a=1/2$ identifies the planar lines with slope $1$. We think that the filtering functions associated with these lines are worth further study in 2D persistent homology, since they appear to encapsulate most relevant information. It is interesting to point out that these lines also take an important place in the paper \cite{CeLa16}, although in a different context, and that the direction of the vector $(1,1,\ldots,1)$ has a key role in the definition of interleaving distance between multidimensional persistence modules \cite{Le15}. The fact that lines of slope $1$ appear in various different approaches suggests to us that they would deserve further study. We observe that for $a=1/2$ the function $f^*_{(a,b)}$ coincides with the function $f_b:=\max\left\{f_1-b,f_2+b\right\}$, so our research suggests that this collection of filtering functions could play an important role in 2D persistent homology. Incidentally, this is also supported by the fact that, fixing $a=1/2$, it is possible to replace the classical upper bound for the distance between the 2-dimensional persistent Betti numbers, that is, $\|f-g\|_{\infty}$, by $\max_b\|f_b-g_b\|_{\infty}$ (Proposition~\ref{statement}).

We conclude by observing that, while our research highlights the importance of the lines of slope $1$, this does not mean that lines with a different slope are useless in 2D persistent homology. As we will show,
the construction of matchings that change coherently with filtrations defined by lines of slope $1$ compels us to use lines with slope different from $1$ as well. This is due to the need to avoid lines possibly corresponding to singular pairs. Furthermore, the phenomenon of monodromy can appear only if lines with a slope different from $1$ are also considered. These facts justify our approach, which is based on the use of every line of positive slope.

 Our paper is devoted to illustrating the theoretical model that we have sketched in this introduction. This will require the use of several new concepts and the proof of many properties related to these concepts, so that a by-product of our research is the development of a new theoretical framework to manage 2D persistent homology, based on the concept of extended Pareto grid.

 The outline of the paper is as follows. In Section~\ref{MS} we recall the necessary mathematical background. In Section~\ref{2DS} we illustrate the 2D setting for persistent Betti numbers functions. In Section~\ref{preparing} we introduce the concept of extended Pareto grid as the main mathematical tool in our approach, and prove several results paving the way to the mathematical framework illustrated in the following sections.
 In Section~\ref{CMD} we introduce the concept of transport of a matching together with its main properties, and present the definition of the coherent 2-dimensional matching distance, also proving its stability.
 In Section~\ref{proofconjecture} we prove the maximum principle for the coherent transport and present our main result on the coherent matching distance in 2D persistent homology (Theorem~\ref{finalth}).
 In Section~\ref{relCUDmatch} we conclude the paper by illustrating the relation between the coherent matching distance and the classical matching distance.
 \smallskip


\subsection*{Related literature}
Studying the persistence properties of vector-valued functions is usually referred to as \emph{multidimensional persistence}. These concepts were first investigated in \cite{FrMu99} with respect to homotopy groups; multidimensional persistence modules were then considered in \cite{CaZo09}, and subsequently studied in other papers including \cite{CaSiZo09} and the recent \cite{Le15,LeWr15}. Another approach to the multidimensional setting is the one proposed in \cite{BiCe*08}. Focusing on 0th homology, the authors introduce a procedure allowing for a reduction of the multidimensional case to the 1-dimensional setting by using a suitable family of derived real-valued filtering functions. Such a result has been partially extended in \cite{CaDiFe10}, i.e. for any homology degree but restricted to the case of max-tame filtering functions, and then further refined in \cite{CeDi*13} for continuous filtering functions. This approach leads to the definition of a multidimensional matching distance between persistent Betti numbers functions and to algorithms for its computation (cf. \cite{BiCe*11,CeFr11}). More recently, the interleaving distance between multidimensional persistence modules has been formally introduced and discussed in \cite{Le15}. However, according to the author of \cite{Le15}, the question
of if and how this last distance can be computed or approximated remains open, thus justifying the study of other metrics such as the one we propose in this paper. In the same line of thought, some recent papers have been devoted to the computation of bounds for the interleaving distance \cite{Bj16,BoLe16,DeXi18}. The phenomenon of monodromy in 2D persistent homology has been described and studied in \cite{CeEtFr13}.

\section{Mathematical setting}\label{MS}

In what follows we will assume that $f=(f_1,f_2)$ is a continuous map from a finitely triangulable topological space $M$ to the real plane $\R^2$.

\subsection{Persistent Betti numbers}\label{PBN}

As a reference for multidimensional persistent Betti numbers we use \cite{CeDi*13}. According to the main topic of this paper, we will also stick to the notations and working assumptions adopted in \cite{CeEtFr13}. In particular, we build on the strategy adopted in the latter paper to study certain instances of monodromy for multidimensional persistent Betti numbers. Roughly, the idea is to reduce the problem to the analysis of a collection of persistent Betti numbers associated with real-valued functions, and to their compact representation in terms of \emph{persistence diagrams}.

We use the following notations: $\Delta^+$ is the open set $\{(u,v)\in\R\times\R:u< v\}$. $\Delta$ represents the diagonal $\{(u,v)\in\R\times\R:u= v\}$. We can extend $\Delta^+$ with points at infinity of the kind $(u,\infty)$, where $|u|<\infty$. Denote this set $\Delta^*$. For a continuous function $\p:M\to\R$, and for any $k\in\mathbb{N}$, if $u<v$, the inclusion map of the sublevel set $M_u=\{x\in M:\p(x)\leq u\}$ into the sublevel set $M_v=\{x\in M:\varphi(x)\leq v\}$ induces a homomorphism from the $k$th homology group of $M_u$ into the $k$th homology group of $M_v$. The image of this homomorphism is called the {\em $k$th persistent homology group of $(M,\p)$ at $(u,v)$}, and is denoted by $H_k^{(u,v)}(M,\p)$. In other words, the group $H_k^{(u,v)}(M,\p)$ contains all and only the homology classes of $k$-cycles born before or at $u$ and still alive at $v$. By assuming that coefficients are chosen in a field $\mathbb{K}$, we get that homology groups are vector spaces. Therefore, they can be completely described by their dimension, leading to the following definition \cite{EdLeZo02}.

\begin{definition}[Persistent Betti Numbers]\label{Rank}
The {\em persistent Betti numbers function} of $\p$ in degree $k$, briefly PBN, is the function $\beta_{\p}:\Delta^+\to\mathbb{N}\cup\{\infty\}$ defined by
\begin{displaymath}
\beta_{\p}(u,v)=\dim H_k^{(u,v)}(M,\p).
\end{displaymath}
\end{definition}
Under the above requirements for $M$, it is possible to show that $\beta_{\p}$ is finite for all $(u,v)\in\Delta^+$ \cite{CeDi*13}. Obviously, for each $k\in\mathbb{N}$, we have different PBNs of $\p$ (which might be denoted by $\beta_{\p,k}$, say), but for the sake of notational simplicity we omit adding any reference to $k$.

Following \cite{CeDi*13}, we assume the use of \v{C}ech homology, and refer the reader to that paper for a detailed explanation about preferring this homology theory to others. For the present work, it is sufficient to recall that, with the use of \v{C}ech homology, the PBNs of a real-valued
function can be completely described by the corresponding \emph{persistence diagrams}. Formally, a persistence diagram can be defined via the notion of \emph{multiplicity} \cite{CoEdHa07,FrLa01}. Following the convention used for PBNs, any reference to $k$ will be dropped in the sequel.

\begin{definition}[Multiplicity]\label{Multiplicity}
The \emph{multiplicity} $\mu_{\p}(u,v)$  of $(u,v)\in\Delta^+$ is the finite, non-negative number given by
\begin{equation*}
\min_{\newatop{\eps>0}{u+\eps<v-\eps}} \beta_{\p}(u+\eps ,v-\eps)-\beta_{\p}(u-\eps ,v-\eps)-\beta_{\p}(u+\eps,v+\eps)+\beta_{\p}(u-\eps ,v+\eps).
\end{equation*}
The \emph{multiplicity} $\mu_{\varphi}(u,\infty)$  of $(u,\infty)$ is the finite, non-negative number given by {\setlength\arraycolsep{1pt}
\begin{equation*}
\min_{\eps > 0,\,u+\eps<v} \beta_{\p}(u+\eps,v)-\beta_{\p}(u-\eps ,v).
\end{equation*}}
\end{definition}
\begin{definition}[Persistence Diagram]\label{persDiag}
The persistence diagram $\dgm(\p)$ is the multiset of all points $(u,v)\in\Delta^*$ such that $\mu_{\p}(u,v)>0$, counted with their multiplicity, union the singleton $\{\Delta\}$, where the point $\Delta$ is counted with infinite multiplicity.
\end{definition}
Each point $(u,v)\in \dgm(\p)\cap\Delta^+$ will be called \emph{proper}, while each point $(u,\infty)\in \dgm(\p)$ will be called \emph{a point at infinity} or \emph{an improper point}.

\begin{remark}\label{remDelta}
In literature, persistence diagrams are usually defined to contain each single point of the diagonal $\Delta$ instead of one point representing the whole diagonal, with infinite multiplicity. The two definitions are equivalent, but we prefer the latter because it will allow us to make easier our exposition and in particular the definition of the set $\mathcal{F}_{U,c}$ in Section~\ref{preparing}.
\end{remark}

We endow $\Delta^*\cup \{\Delta\}$ with the following extended metric $d$.
We define
\begin{equation}\label{deltaDistance}
d\left(\left(u,v\right),\left(u',v'\right)\right):=\min\left\{\max\left\{|u-u'|,|v-v'|\right\},\max\left\{\frac{v-u}{2},\frac{v'-u'}{2}\right\}\right\}
\end{equation}
for every
$\left(u,v\right),\left(u',v'\right)\in\Delta^*$, with the convention about points at infinity that $\infty - v =  v-\infty = \infty$ when $v\neq \infty$, $\infty -\infty =0$, $\frac{\infty}{2} = \infty$, $|\infty| = \infty$, $\min\{c,\infty\} = c$ and $\max\{\infty,c\}=\infty$.  Furthermore, we set $d((u,v),\Delta):= \infty$ if $v=\infty$, $d((u,v),\Delta):= \frac{v-u}{2}$ if $v<\infty$, and $d(\Delta,\Delta):=0$.

Persistence diagrams are stable under the \emph{bottleneck distance} (a.k.a. \emph{matching distance}). Roughly, small changes in the considered function $\p$ induce small changes in the position of the points of $\dgm(\p)$ which are far from the diagonal, and possibly
produce variations close to the diagonal \cite{CoEdHa07,dAFrLa10}. A visual  intuition of this fact is given in Figure~\ref{bottDist}. Formally, we have the following definition:
\begin{definition}[Bottleneck distance]\label{matchDist}
Let $\dgm (\varphi)$, $\dgm (\psi)$ be two persistence diagrams.
For each bijection $\sigma$ between $\dgm(\varphi)$ and
$\dgm (\psi)$ we set $\mathrm{cost}(\sigma):=\max_{X\in
\dgm (\varphi)}d(X,\sigma(X))$.
The bottleneck
distance $d_B\left(\dgm (\varphi),\dgm (\psi)\right)$ is defined as
\begin{equation*}
d_B(\dgm(\varphi),\dgm(\psi))=\min_{\sigma}\mathrm{cost}(\sigma),
\end{equation*}
where $\sigma$ varies among all the bijections between $\dgm(\varphi)$ and
$\dgm (\psi)$.
\end{definition}
In practice, the distance $d$ defined in (\ref{deltaDistance}) compares the cost of moving a point $X$ to a point $Y$ with that of annihilating them by moving both $X$ and $Y$ onto $\Delta$, and takes the most convenient. Therefore, $d(X,Y)$ can be considered a measure of the minimum cost of moving $X$ to $Y$ along two different paths.

Sometimes in literature the definition of $\mathrm{cost}(\sigma)$ is given by means of a supremum instead of a maximum, and
the bottleneck distance $d_B(\dgm(\varphi),\dgm(\psi))$ is introduced as an infimum instead of a minimum. We underline that both these presentations are correct, as pointed out in \cite{CeDi*13}. In other words, a matching $\bar\sigma:\dgm(\varphi)\to\dgm(\psi)$ and a point $\bar X\in \dgm(\varphi)$ always exist, such that $d_B(\dgm(\varphi),\dgm(\psi))=\mathrm{cost}(\bar\sigma)=d(\bar X,\bar \sigma(\bar X))$. The matching $\bar \sigma$ is called an \emph{optimal matching} between $\dgm(\varphi)$ and $\dgm(\psi)$.

The stability of persistence diagrams can then be formalized as follows~\cite{CoEdHa07,dAFrLa10}:
\begin{theorem}[Stability Theorem]\label{stability}
Let $\varphi,\psi:M\to\R$ be two continuous functions. Then
$d_B\left(\dgm(\varphi),\dgm(\psi)\right)\leq\|\varphi-\psi\|_{\infty}
$.
\end{theorem}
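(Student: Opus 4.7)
The plan is to prove the classical stability theorem by combining an $\eps$-interleaving at the level of persistent Betti numbers with a box-counting argument that produces the matching. Set $\eps:=\|\varphi-\psi\|_{\infty}$ and observe that for every $x\in M$ we have $\varphi(x)-\eps\le\psi(x)\le\varphi(x)+\eps$. This yields the chain of inclusions of sublevel sets
\begin{equation*}
\{\varphi\le u-\eps\}\ \subseteq\ \{\psi\le u\}\ \subseteq\ \{\varphi\le u+\eps\}
\end{equation*}
for every $u\in\R$. Passing to the induced maps in $k$th \v Cech homology and chasing the functorial diagram that compares $(u-\eps,v+\eps)$-rank of $\varphi$ with $(u,v)$-rank of $\psi$, one obtains the fundamental interleaving
\begin{equation*}
\beta_{\varphi}(u+\eps,v-\eps)\ \ge\ \beta_{\psi}(u,v)\ \ge\ \beta_{\varphi}(u-\eps,v+\eps),
\end{equation*}
valid whenever $u+\eps<v-\eps$, and symmetrically with $\varphi$ and $\psi$ exchanged.

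Next I would convert this pointwise inequality into a statement about multiplicities. Using Definition~\ref{Multiplicity}, for any proper point $(u,v)\in\dgm(\varphi)$ and any sufficiently small $\eta>0$, the multiplicity $\mu_{\varphi}(u,v)$ equals the alternating sum of the four values $\beta_{\varphi}$ at the corners of the box $[u-\eta,u+\eta]\times[v-\eta,v+\eta]$. The $\eps$-interleaving above implies that the number of points of $\dgm(\psi)$ (counted with multiplicity) contained in the open $\eps$-neighbourhood of any such box is at least $\mu_{\varphi}(u,v)$, and vice versa. This is the ``box lemma'' formulation of stability, and it is precisely the combinatorial input needed to match points in $\dgm(\varphi)$ with points in $\dgm(\psi)$. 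Points at infinity require a separate but analogous argument using the truncated multiplicity in Definition~\ref{Multiplicity}, and one must verify that a point $(u,v)\in\dgm(\varphi)$ with $\frac{v-u}{2}\le\eps$ may always be matched to $\Delta$ at cost $\le\eps$ under the metric $d$ defined in~(\ref{deltaDistance}).

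To produce the bijection $\sigma$ realizing cost $\le\eps$, I would feed the box-counting inequalities into a Hall-type marriage argument on the bipartite graph whose left and right vertices are the proper points of $\dgm(\varphi)$ and $\dgm(\psi)$ (plus copies of $\Delta$ with infinite multiplicity), and whose edges join pairs at $d$-distance at most $\eps$. A standard compactness reduction lets one work with finitely many points first (truncating near the diagonal and at infinity) and then pass to the limit; this is where the convention of treating $\Delta$ as a single point with infinite multiplicity, noted in Remark~\ref{remDelta}, is convenient, since any unmatched proper point $(u,v)$ of either diagram automatically satisfies $d((u,v),\Delta)\le\eps$ whenever it escapes all matching partners. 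Combining these pieces yields $\cost(\sigma)\le\eps$, and since $d_B$ is the minimum of $\cost$ over all bijections, the desired inequality $d_B(\dgm(\varphi),\dgm(\psi))\le\|\varphi-\psi\|_{\infty}$ follows.

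The main obstacle is the matching step: the pointwise interleaving is essentially a bookkeeping exercise, but converting the box inequalities into a globally consistent bijection requires care. The delicate cases are points that lie close to the diagonal (where one must decide whether to match across diagrams or onto $\Delta$) and points at infinity (which can only be matched to other points at infinity at finite cost, so one must verify that their multiplicities agree when $\eps$-shifted). Handling both together through a single Hall's-theorem argument, rather than treating proper and improper points separately, is the cleanest route and the step I would spend the most effort on.
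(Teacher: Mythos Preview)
The paper does not supply its own proof of this theorem: it is stated as a known result and attributed to the references \cite{CoEdHa07,dAFrLa10}. So there is no in-paper argument to compare against; you are essentially sketching the proof that appears in those cited works.

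Your outline follows the standard route taken there: derive the $\eps$-interleaving of persistent Betti numbers from the sublevel-set inclusions, promote it to a box-counting inequality on multiplicities, and then extract a matching via a Hall-type argument, sending unmatched points near the diagonal to $\Delta$. That is the correct architecture. One point to tighten: the statement ``the number of points of $\dgm(\psi)$ contained in the open $\eps$-neighbourhood of any such box is at least $\mu_{\varphi}(u,v)$'' is not quite what the box lemma says; the actual inequality compares the total multiplicity of $\dgm(\psi)$ inside an $\eps$-enlarged rectangle with the total multiplicity of $\dgm(\varphi)$ inside the original rectangle, and it is this rectangle-monotonicity (not a neighbourhood count around a single point) that feeds into Hall's condition. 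Also, the Hall/compactness step is where most of the real work lies, and your sketch acknowledges this but does not carry it out; in the cited proofs this is handled either by an explicit induction on the number of cornerpoints or by a limiting argument over tame approximations, and either would need to be written in full for the proof to be complete.
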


\begin{figure}[ht]
\begin{center}
\begin{tikzpicture}[scale=0.85]
\draw [line width=1.1] (0,0) -- (8,0);
\draw (-0.2,0) node {$M$};
\draw [->,line width=1.1] (8.5,0) -- (13.8,0);
\draw [->,line width=1.1] (8.5,0) -- (8.5,5.3);
\draw (13.8,-0.25) node {$u$};
\draw (8.25,5.3) node {$v$};

\draw [cyan] plot [smooth,tension=1] coordinates {(0,1.3) (2.0,4.8) (4.5,0.5) (6.5,3.5) (8.0,0.8)};

\draw [dotted] (0,1.3) -- (9.8,1.3);
\draw [dotted] (1.9,4.81) -- (9.8,4.81) -- (9.8,1.3);
\draw [dotted] (4.7,0.47) -- (8.97,0.47);
\draw [dotted] (8.0,0.8) -- (9.3,0.8);
\draw [dotted] (6.5,3.5) -- (9.3,3.5) -- (9.3,0.8);

\draw [cyan,line width=0.8] (8.97,0.46) -- (8.97,5.3);
\draw [line width=0.1,black,fill=cyan] (9.8,4.81) circle (0.04);
\draw [line width=0.1,black,fill=cyan] (9.3,3.5) circle (0.04);

\draw [magenta] plot [smooth,tension=0.3] coordinates {(0,1.1) (0.7,3.5) (1.7,4.6) (2.2,4.4) (2.5,5.0) (4.1,0.2) (4.6,0.7) (4.9,0.5) (6.2,3.5) (6.6,3.1) (7.1,3.3) (7.5,2.4) (7.9,0.9)};

\draw [dotted] (0,1.1) -- (9.6,1.1);
\draw [dotted] (2.18,4.4) -- (12.9,4.4);
\draw [dotted] (1.73,4.61) -- (12.9,4.61) -- (12.9,4.4);
\draw [dotted] (2.43,5.08) -- (9.6,5.08) -- (9.6,1.1);
\draw [dotted] (4.2,0.11) -- (8.61,0.11);
\draw [dotted] (4.6,0.7) -- (8.93,0.7) -- (8.93,0.43);
\draw [dotted] (4.81,0.43) -- (8.93,0.43);
\draw [dotted] (7.9,0.9) -- (9.4,0.9);
\draw [dotted] (6.26,3.55) -- (9.4,3.55) -- (9.4,0.9);
\draw [dotted] (6.6,3.1) -- (11.6,3.1);
\draw [dotted] (7.08,3.31) -- (11.6,3.31) -- (11.6,3.1);

\draw [magenta,line width=0.8] (8.61,0.12) -- (8.61,5.3);
\draw [line width=0.1,black,fill=magenta] (12.9,4.61) circle (0.04);
\draw [line width=0.1,black,fill=magenta] (9.6,5.08) circle (0.04);
\draw [line width=0.1,black,fill=magenta] (8.93,0.7) circle (0.04);
\draw [line width=0.1,black,fill=magenta] (9.4,3.55) circle (0.04);
\draw [line width=0.1,black,fill=magenta] (11.6,3.31) circle (0.04);

\draw [line width=1.1] (8.5,0) -- (13.8,5.3);

\draw[color = cyan] (1.0,4.6) node {$\varphi$};
\draw[color = magenta] (3.1,4.1) node {$\psi$};
\end{tikzpicture}
\end{center}
\caption{Changing the function $\varphi$ to $\psi$ induces a change in the persistence diagram. In this example, the graphs on the left represent the real-valued functions $\varphi$ and $\psi$, defined on a space $M$ (a segment). The corresponding persistence diagrams (restricted to 0th homology) are displayed on the right.}\label{bottDist}
\end{figure}
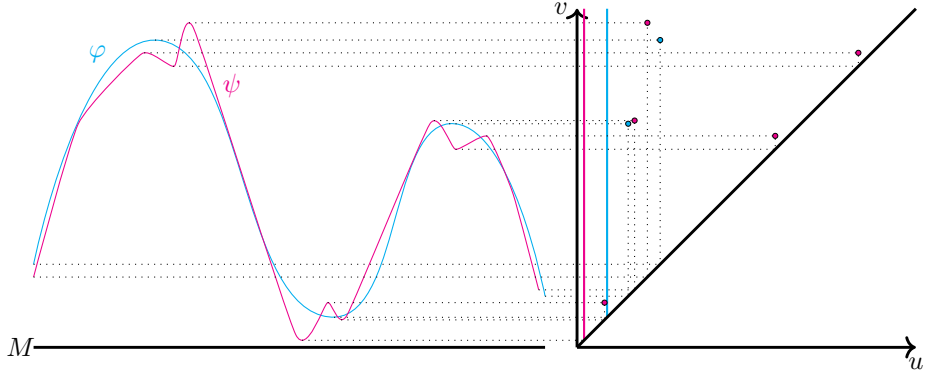

\section{2-dimensional setting}\label{2DS}
The definition of persistent Betti numbers can be easily extended to functions taking values in $\R^2$ \cite{CeDi*13}.
For a continuous function $f=(f_1,f_2):M\to\R^2$, and for any $k\in\mathbb{N}$, if $u_1<v_1$ and $u_2<v_2$, the inclusion map of the sublevel set $M_{(u_1,u_2)}:=\{x\in M:f_1(x)\leq u_1, f_2(x)\leq u_2\}$ into the sublevel set $M_{(v_1,v_2)}:=\{x\in M:f_1(x)\leq v_1, f_2(x)\leq v_2\}$ induces a homomorphism from the $k$th homology group of $M_{(u_1,u_2)}$ into the $k$th homology group of $M_{(v_1,v_2)}$. The image of this homomorphism is called the {\em $k$th persistent homology group of $(M,f)$ at $((u_1,u_2),(v_1,v_2))$}, and is denoted by $H_k^{((u_1,u_2),(v_1,v_2))}(M,f)$.

\begin{definition}[Persistent Betti Numbers in the case $m=2$]\label{Rank2}
The {\em persistent Betti numbers function} of $f=(f_1,f_2):M\to\R^2$ in degree $k$, briefly PBN, is the function $\beta_{f}:\{((u_1,u_2),(v_1,v_2))\in \R^2\times\R^2:u_1<v_1,u_2<v_2\}\to\mathbb{N}\cup\{\infty\}$ defined by
\begin{displaymath}
\beta_{f}((u_1,u_2),(v_1,v_2))=\dim H_k^{((u_1,u_2),(v_1,v_2))}(M,f).
\end{displaymath}
\end{definition}
We discuss this for the specific case of the above function $f:M\to\R^2$, referring the reader to Figure~\ref{foliation} for a pictorial representation.

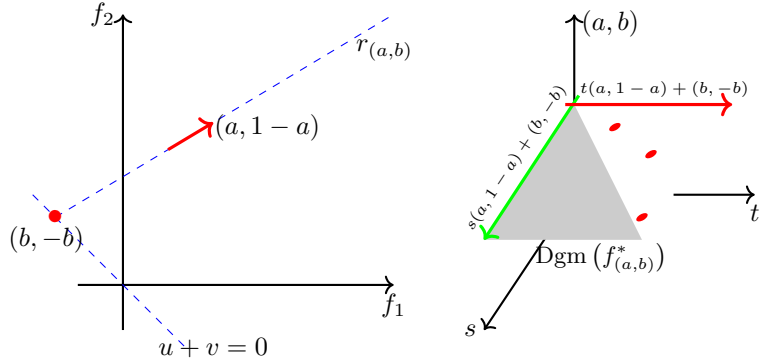
\begin{figure}[ht]
\begin{center}

\begin{tikzpicture}[scale=0.6]
\draw[->,line width=0.8] (-1,0) -- (6,0);
\draw[->,line width=0.8] (0,-1) -- (0,6);
\draw (6.0,-0.45) node {$f_1$};
\draw (-0.45,6.0) node {$f_2$};
\draw[color=blue,dashed] (-2,2) -- (1.4,-1.4);
\draw[color=blue,dashed] (-1.5,1.5) -- (6,6);
\draw[color=red] (-1.5,1.5) node {{\Large $\bullet$}};
\draw (-1.7,1.0) node {$(b,-b)$};
\draw (2.0,-1.4) node {$u+v=0$};
\draw[->, color=red,line width=1.2] (1.0,3.0) -- (2.0,3.6);
\draw (3.2,3.5) node {$(a,1-a)$};
\draw (5.8,5.2) node {$r_{(a,b)}$};

\draw[->,line width=0.8] (10,4) -- (10,6);
\draw[->,line width=0.8] (12.2,2) -- (14,2);
\draw[->,line width=0.8] (10,2) -- (8,-1);
\fill[gray!40] (10,4) -- (8,1) -- (11.5,1) -- cycle;
\fill [color=red,rotate around={30:(10.9,3.5)}] (10.9,3.5) circle (0.14 and 0.07);
\fill [color=red,rotate around={30:(11.7,2.9)}] (11.7,2.9) circle (0.14 and 0.07);
\fill [color=red,rotate around={30:(11.5,1.5)}] (11.5,1.5) circle (0.14 and 0.07);
\draw (10.6,0.6) node {{\small $\dgm\left(f_{(a,b)}^*\right)$}};
\draw[->,color=green,line width=1.2] (10.1,4.2) -- (8,1) node[midway,above,sloped,color=black] {{\tiny $s(a,1-a)+(b,-b)$}};
\draw[->,color=red,line width=1.2] (9.8,4.0) -- (13.5,4.0);
\draw (10.8,5.8) node {$(a,b)$};
\draw (14.0,1.6) node {$t$};
\draw (7.7,-1) node {$s$};
\draw (12.0,4.35) node {{\tiny $t(a,1-a)+(b,-b)$}};
\end{tikzpicture}
\end{center}
\caption{Correspondence between an admissible line $r_{(a,b)}$
and the persistence diagram $\dgm\left(f_{(a,b)}^*\right)$. Left: a 1D filtration is constructed by sweeping the line $r_{(a,b)}$. The vector $(a,1-a)$ and the point $(b,-b)$ are used to parameterize this line as $r_{(a,b)}: t\cdot (a,1-a) + (b,-b)$. Right: the persistence diagram of the 1D filtration can be found on a planar section of the domain of the 2D persistent Betti numbers function $\beta_f$.}
\label{foliation}
\end{figure}

Let us consider the set $\Lambda^+$ of all lines of $\R^2$ that have positive slope. This set can be parameterized by the set ${\mathcal {P}}(\Lambda^+)=\,]0,1[\,\times\R$, by taking each line $r\in\Lambda^+$ to the unique pair $(a,b)$ with $0<a<1$ and $b\in \R$ such that $(a,1-a)$ is a direction vector of $r$ and $(b,-b)\in r$.
The line $r$ will be denoted by $r_{(a,b)}$.
$\Lambda^+$ is referred to as the \emph{set of admissible lines}.
Each point $(u,v)=(u(t),v(t))=t\cdot (a,1-a)+(b,-b)$ of $r_{(a,b)}$ can be associated with the subset $M^{a,b}_t:=M_{(u(t),v(t))}$, that is the set
of the points of $M$ ``whose image by $f$ is under and on the left of $(u(t),v(t))$'' while $(u(t),v(t))$ moves along the line $r_{(a,b)}$. As a consequence, each admissible line $r_{(a,b)}$ defines a filtration $\{M^{a,b}_t\}$ of $M$ and a persistence diagram associated with this filtration.
The family of the persistence diagrams associated with the lines $r_{(a,b)}$ is called the \emph{2D persistence diagram of $f$}.

It is interesting to observe that the filtration $\{M^{a,b}_t\}$ can be also defined as the sublevel sets filtration induced by a suitable real-valued function.
In fact, we have that $M^{a,b}_t=\{x\in M: f_{(a,b)}(x)\leq t\}$ where $f_{(a,b)}:M\to\R$ is defined by setting $f_{(a,b)}(x):=\max\left\{\frac{f_1(x)-b}{a},\frac{f_2(x)+b}{1-a}\right\}$. The Reduction Theorem proved in \cite{CeDi*13} states that the persistent Betti numbers function $\beta_{f}$ can be completely recovered by considering all and only the persistent Betti numbers functions $\beta_{f_{(a,b)}}$ associated with the admissible lines $r_{(a,b)}$, which are in turn encoded in the corresponding persistence diagrams $\dgm\left(f_{(a,b)}\right)$.

In some sense, the study of persistent homology for $\R^2$-valued functions can be seen as the study of the persistent homology groups associated with the filtrations defined by the lines $r_{(a,b)}$, varying $(a,b)$ in ${\mathcal {P}}(\Lambda^+)$. It is natural to wonder which pairs $(a,b)$ are more relevant for the topological comparison of two functions $f,g:M\to \R^2$.
This paper is mainly devoted to underline the particular importance of the pairs $(a,b)\in {\mathcal {P}}(\Lambda^+)$ with $a=1/2$, starting from the following results providing an alternative, yet equivalent, formulation of the  $L^{\infty}$-distance between $f$ and $g$:

\begin{lemma}\label{lemmafab}
For every $(a,b)\in {\mathcal {P}}(\Lambda^+)$ set $f_{(a,b)}^*:=\min\{a,1-a\}\cdot f_{(a,b)}$.
Then $\left\|f_{(a,b)}^*- g_{(a,b)}^*\right\|_\infty\le \|f-g\|_\infty$.
\end{lemma}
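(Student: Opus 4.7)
The plan is to bound pointwise, then take the sup. Fix $x\in M$ and set $c:=\min\{a,1-a\}$, so that $f^*_{(a,b)}(x)-g^*_{(a,b)}(x)=c\bigl(f_{(a,b)}(x)-g_{(a,b)}(x)\bigr)$. The goal becomes
\[
c\,\bigl|f_{(a,b)}(x)-g_{(a,b)}(x)\bigr|\le \|f-g\|_\infty.
\]

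First I would invoke the elementary inequality $\bigl|\max\{A_1,A_2\}-\max\{B_1,B_2\}\bigr|\le \max\bigl\{|A_1-B_1|,|A_2-B_2|\bigr\}$, applied with $A_i,B_i$ being the two arguments of the max defining $f_{(a,b)}(x)$ and $g_{(a,b)}(x)$. Since the shift by $\mp b$ cancels in each component, this yields
\[
\bigl|f_{(a,b)}(x)-g_{(a,b)}(x)\bigr|\le \max\left\{\frac{|f_1(x)-g_1(x)|}{a},\;\frac{|f_2(x)-g_2(x)|}{1-a}\right\}.
\]

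Multiplying by $c$ and using the obvious bounds $c/a\le 1$ and $c/(1-a)\le 1$, the right-hand side is at most $\max\{|f_1(x)-g_1(x)|,|f_2(x)-g_2(x)|\}\le \|f-g\|_\infty$. Taking the supremum over $x\in M$ yields the claim.

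There is no real obstacle; the only subtlety worth flagging is the normalizing factor $\min\{a,1-a\}$, which is precisely what is needed to absorb the factors $1/a$ and $1/(1-a)$ appearing when one differentiates the two pieces of the max. This also explains why $f^*_{(a,b)}$ rather than $f_{(a,b)}$ is the ``correct'' filtering function for purposes of stability, and why the choice $a=1/2$ is the one that maximizes the coefficient $\min\{a,1-a\}$, thereby giving the tightest bound.
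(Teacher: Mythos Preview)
Your proof is correct and follows essentially the same approach as the paper: bound pointwise via the inequality $|\max\{A_1,A_2\}-\max\{B_1,B_2\}|\le\max\{|A_1-B_1|,|A_2-B_2|\}$, then absorb the denominators $a$ and $1-a$ using the factor $\min\{a,1-a\}$. The paper presents the same chain of inequalities, only slightly more tersely (it leaves the max-of-maxes inequality implicit).
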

\begin{proof}\label{prooflemmafab}
For every $(a,b)\in {\mathcal {P}}(\Lambda^+)$ and every $x\in M$, we have
$$
\begin{array}{lll}
\left|f_{(a,b)}^*(x)-g_{(a,b)}^*(x)\right| & = & \min\{a,1-a\}\cdot\left|f_{(a,b)}(x)-g_{(a,b)}(x)\right|  \\
&\le & \min\{a,1-a\}\cdot\max\left\{\left|\frac{f_1(x)-{g}_1(x)}{a}\right|, \left|\frac{f_2(x)-{g}_2(x)}{1-a}\right|\right\}  \\
&\le & \max\left\{\left|f_1(x)-{g}_1(x)\right|, \left|f_2(x)-{g}_2(x)\right|\right\}.
\end{array}
$$
\end{proof}

\begin{proposition}\label{statement}
Let $f,g:M\to\R^2$ be two continuous functions.
Then
$$
\|f-g\|_\infty=
\sup_{a,b}\left\|f^*_{(a,b)}-g^*_{(a,b)}\right\|_\infty=
\sup_{b}\left\|f^*_{(1/2,b)}-g^*_{(1/2,b)}\right\|_\infty.
$$
\end{proposition}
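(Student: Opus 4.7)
The plan is to prove the chain of inequalities
\[
\|f-g\|_\infty \;\le\; \sup_{b}\bigl\|f^*_{(1/2,b)}-g^*_{(1/2,b)}\bigr\|_\infty \;\le\; \sup_{a,b}\bigl\|f^*_{(a,b)}-g^*_{(a,b)}\bigr\|_\infty \;\le\; \|f-g\|_\infty,
\]
from which the three quantities must all coincide. The rightmost inequality is exactly Lemma~\ref{lemmafab} with the supremum taken over $(a,b)\in\mathcal{P}(\Lambda^+)$; the middle inequality is automatic, since restricting the sup to the slice $a=1/2$ can only shrink it. All the content therefore sits in the leftmost inequality.

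To address it I would first unwind the definition of $f^*_{(a,b)}$ at $a=1/2$: since $\min\{a,1-a\}=1/2$ and the two denominators in the $\max$ are both $1/2$, one obtains
\[
f^*_{(1/2,b)}(x)=\max\{f_1(x)-b,\,f_2(x)+b\},\qquad g^*_{(1/2,b)}(x)=\max\{g_1(x)-b,\,g_2(x)+b\}.
\]
The main idea is that the parameter $b$ can be chosen so as to force both maxima to be attained on the same branch at any prescribed point: taking $b$ very negative makes the first argument dominate, while taking $b$ very positive makes the second argument dominate. In either regime the $\pm b$ shift cancels inside the difference $f^*_{(1/2,b)}(x_0)-g^*_{(1/2,b)}(x_0)$, which then reduces to $f_i(x_0)-g_i(x_0)$ for the appropriate coordinate $i\in\{1,2\}$.

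To make this precise, fix $\eps>0$ and choose $x_0\in M$ with
\[
\max\bigl\{|f_1(x_0)-g_1(x_0)|,\,|f_2(x_0)-g_2(x_0)|\bigr\}>\|f-g\|_\infty-\eps.
\]
If the maximum is attained at the first coordinate (the other case being symmetric, handled by choosing $b$ sufficiently large and positive), pick any $b$ with
\[
b\le\tfrac12\min\bigl\{f_1(x_0)-f_2(x_0),\,g_1(x_0)-g_2(x_0)\bigr\}.
\]
Then $f_1(x_0)-b\ge f_2(x_0)+b$ and $g_1(x_0)-b\ge g_2(x_0)+b$, so
\[
\bigl|f^*_{(1/2,b)}(x_0)-g^*_{(1/2,b)}(x_0)\bigr|=|f_1(x_0)-g_1(x_0)|>\|f-g\|_\infty-\eps;
\]
consequently $\sup_b\|f^*_{(1/2,b)}-g^*_{(1/2,b)}\|_\infty>\|f-g\|_\infty-\eps$, and letting $\eps\to 0^+$ closes the chain.

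The argument is conceptually short and the only real ``obstacle'' is bookkeeping: the two-case split on which coordinate of $f-g$ dominates at $x_0$, together with selecting $b$ accordingly. No properties of persistent homology, and no use of the admissibility condition $(a,b)\in\mathcal{P}(\Lambda^+)$ beyond the explicit formula for $f^*_{(a,b)}$, are required.
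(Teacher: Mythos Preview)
Your proof is correct and follows essentially the same approach as the paper: establish the chain of inequalities using Lemma~\ref{lemmafab} for one direction, then for the reverse direction unwind $f^*_{(1/2,b)}=\max\{f_1-b,f_2+b\}$ and choose $b$ with the appropriate sign so that both maxima fall on the coordinate realizing $\|f-g\|_\infty$. The only cosmetic difference is that the paper exploits compactness of $M$ to pick a point $\bar x$ where $\|f-g\|_\infty$ is actually attained (and then chooses $b$ globally, e.g.\ $b<\min f_1,-\max f_2,\min g_1,-\max g_2$), whereas you work with an $\eps$-approximate maximizer and a pointwise choice of $b$; both lead to the same conclusion.
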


\begin{proof}
By Lemma~\ref{lemmafab}, we know that if $(a,b)\in {\mathcal {P}}(\Lambda^+)$ then
$\left\|f_{(a,b)}^*- g_{(a,b)}^*\right\|_\infty\le \|f-g\|_\infty$. Therefore
we have that
$$
\|f-g\|_\infty\ge
\sup_{a,b}\left\|f^*_{(a,b)}-g^*_{(a,b)}\right\|_\infty\ge
\sup_{b}\left\|f^*_{(1/2,b)}-g^*_{(1/2,b)}\right\|_\infty.
$$
Let us take a point $\bar x\in M$ such that $\|f-g\|_\infty=\|f(\bar x)-g(\bar x)\|_\infty$.
We can assume that $\|f(\bar x)-g(\bar x)\|_\infty=|f_1(\bar x)-g_1(\bar x)|$.
If $a=1/2$ then $\min\{a,1-a\}=a=1-a$, so that
$f^*_{(a,b)}(\bar x)=\max\{f_1(\bar x)-b,f_2(\bar x)+b\}$ and
$g^*_{(a,b)}(\bar x)=\max\{g_1(\bar x)-b,g_2(\bar x)+b\}$.

Furthermore, if we also assume that $b<\min f_1,-\max f_2,\min g_1,-\max g_2$ then
$f^*_{(a,b)}(\bar x)=f_1(\bar x)-b$ and $g^*_{(a,b)}(\bar x)=g_1(\bar x)-b$.
It follows that
$$
\sup_{b}\left\|f^*_{(1/2,b)}-g^*_{(1/2,b)}\right\|_\infty\ge
|f_1(\bar x)-g_1(\bar x)|=\|f-g\|_\infty.
$$
\end{proof}

\subsubsection{2-dimensional matching distance}\label{2DMD}

Assume now that we have two continuous functions $f,g:M\to\R^2$. We consider the persistence diagrams $\dgm\left(f_{(a,b)}\right)$, $\dgm\left(g_{(a,b)}\right)$ associated with the admissible line $r_{(a,b)}$, and normalize them by multiplying their points by $\min\{a,1-a\}$. This is equivalent to consider the \emph{normalized persistence} diagrams $\dgm\left(f_{(a,b)}^*\right)$, $\dgm\left(g_{(a,b)}^*\right)$, with $f^*_{(a,b)}=\min\{a,1-a\}\cdot f_{(a,b)}$ and $g^*_{(a,b)}=\min\{a,1-a\}\cdot g_{(a,b)}$, respectively. The 2-dimensional matching distance $D_{\mathrm{match}}(f,g)$ \cite{BiCe*11} is then defined as
$$
D_{\mathrm{match}}(f,g)=\sup_{(a,b)\in{\mathcal {P}}(\Lambda^+)}d_B\left(\dgm\left(f_{(a,b)}^*\right),\dgm\left(g_{(a,b)}^*\right)\right),
$$
with $d_B\left(\dgm\left(f_{(a,b)}^*\right),\dgm\left(g_{(a,b)}^*\right)\right)$ denoting the bottleneck distance between the normalized persistence diagrams $\dgm\left(f_{(a,b)}^*\right)$ and $\dgm\left(g_{(a,b)}^*\right)$.

\begin{remark}
It is common in the literature (cf. \cite{CeDi*13}) to refer to the 2-dimensional matching distance $D_{\mathrm{match}}$ as giving a distance between two 2-dimensional persistent Betti numbers functions (or 2D persistence diagrams). In this paper, in order to simplify the exposition, it will be said to give a pseudo-distance between the functions $f$ and $g$ themselves, denoted $D_{\mathrm{match}}(f,g)$. The same will be the case for the coherent matching distance $CD_U$ which will be defined in Section~\ref{secdefCMD}.
\end{remark}

By Lemma~\ref{lemmafab} and the Stability Theorem~\ref{stability} the next result immediately follows.

\begin{corollary}\label{corlemmafab}
$D_{\mathrm{match}}(f,g)\le \|f-g\|_\infty$.
\end{corollary}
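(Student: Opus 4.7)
The plan is to chain together the two results the corollary explicitly cites. First, I would fix an arbitrary pair $(a,b) \in \mathcal{P}(\Lambda^+)$ and view $f^*_{(a,b)}, g^*_{(a,b)} : M \to \R$ as continuous real-valued filtering functions on the finitely triangulable space $M$. This puts us squarely in the setting of the Stability Theorem~\ref{stability}, which yields
\[
d_B\!\left(\dgm\!\left(f^*_{(a,b)}\right), \dgm\!\left(g^*_{(a,b)}\right)\right) \le \left\|f^*_{(a,b)} - g^*_{(a,b)}\right\|_\infty.
\]

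Next I would apply Lemma~\ref{lemmafab} directly to the right-hand side, obtaining $\left\|f^*_{(a,b)} - g^*_{(a,b)}\right\|_\infty \le \|f-g\|_\infty$. Composing the two inequalities gives a uniform upper bound on the bottleneck distance between the normalized persistence diagrams that does not depend on the chosen admissible line. Taking the supremum over all $(a,b) \in \mathcal{P}(\Lambda^+)$ on the left produces exactly $D_{\mathrm{match}}(f,g) \le \|f-g\|_\infty$, which is the claim.

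There is essentially no obstacle here: the corollary is a direct composition of two already-established inequalities, one for each of the two reductions (from $2$D to $1$D through the slicing $f \mapsto f^*_{(a,b)}$, and from the $1$D persistence diagram to the $1$D sup-norm of the filtering function). The only mild subtlety worth mentioning is that both cited results are uniform in $(a,b)$, so passing to the supremum on the left-hand side is legitimate without any additional compactness or continuity argument on the parameter space.
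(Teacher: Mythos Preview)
Your proposal is correct and follows exactly the route the paper indicates: apply the Stability Theorem~\ref{stability} to the real-valued functions $f^*_{(a,b)}$, $g^*_{(a,b)}$, bound the resulting sup-norm via Lemma~\ref{lemmafab}, and then pass to the supremum over $(a,b)$. The paper itself does not spell this out beyond the sentence ``By Lemma~\ref{lemmafab} and the Stability Theorem~\ref{stability} the next result immediately follows,'' so your write-up is in fact more detailed than the original.
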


\begin{remark}\label{remNormStability}
The introduction of normalized persistence diagrams in the definition of $D_{\mathrm{match}}$ is crucial to obtain a stable pseudo-metric
(cf. \cite[Thm.~4.4]{CeDi*13}). Indeed, Lemma~\ref{lemmafab} implies that the bottleneck distance $d_B\left(\dgm\left(f_{(a,b)}^*\right),\dgm\left(g_{(a,b)}^*\right)\right)$ is less than or equal to $\|f-g\|_\infty$, while we underline that this is not true for the distance $d_B\left(\dgm\left(f_{(a,b)}\right),\dgm\left(g_{(a,b)}\right)\right)$.
\end{remark}

\subsubsection{Monodromy in 2-dimensional persistent homology}
Since each function $f_{(a,b)}^*$ depends continuously on the parameters $a$ and $b$ with respect to the $\sup$-norm, it follows that the set of points in $\dgm\left(f_{(a,b)}^*\right)$ depends continuously on the parameters $a$ and $b$. Analogously, the set of points in $\dgm\left(g_{(a,b)}^*\right)$ depends continuously on the parameters $a$ and $b$. Suppose that $\sigma_{(a,b)}:\dgm\left(f_{(a,b)}^*\right)\to \dgm\left(g_{(a,b)}^*\right)$ is an \emph{optimal matching}, i.e. one of the matchings achieving the bottleneck distance $d_B\left(\dgm\left(f_{(a,b)}^*\right),\dgm\left(g_{(a,b)}^*\right)\right)$. Given the above arguments, a natural question arises, whether $\sigma_{(a,b)}$ changes continuously under variations of $a$ and $b$. In other words, we wonder if it is possible to straightforwardly introduce a notion of \emph{coherence} for optimal matchings with respect to the elements of ${\mathcal {P}}(\Lambda^+)$.

Perhaps surprisingly, the answer is no. A first obstruction is given by the fact that, trying to continuously extend a matching $\sigma_{(a,b)}$, the identity of points in the (normalized) persistence diagrams is not preserved when considering an admissible pair $\left(\bar a,\bar b\right)$ for which either $\dgm\left(f_{\left(\bar a,\bar b\right)}^*\right)$ or $\dgm\left(g_{\left(\bar a,\bar b\right)}^*\right)$ has points with multiplicity greater than 1. In other words, we cannot follow the path of a point of a persistence diagram when it collides with another point of the same persistence diagram. On the one hand, this problem can be solved by fixing a degree $k$ and replacing ${\mathcal {P}}(\Lambda^+)$ with its subset $\mathrm{Reg}(f)\cap \mathrm{Reg}(g)$, where $\mathrm{Reg}(f)$ is the set of all points $(a,b)\in {\mathcal {P}}(\Lambda^+)$ such that in degree $k$ the persistence diagram $\dgm\left(f_{(a,b)}^*\right)\setminus \{\Delta\}$ does not contain multiple points.
Throughout the rest of the paper, we will talk about \emph{singular pairs for $f$ in degree $k$} to denote the pairs $(a,b)\in \mathrm{Sing}(f):={\mathcal {P}}(\Lambda^+)\setminus \mathrm{Reg}(f)$, and about \emph{regular pairs for $f$ in degree $k$} to denote the pairs $(a,b)\in \mathrm{Reg}(f)$. An analogous convention holds referring to the singular and regular pairs for $g$.

On the other hand, however, continuously extending a matching $\sigma_{(a,b)}$ presents some problems even in this setting. Roughly, the process of extending $\sigma_{(a,b)}$ along a path $\pi:[0,1]\to\mathrm{Reg}(f)\cap\mathrm{Reg}(g)$ depends on the homotopy class of $\pi$ relative to its endpoints. This phenomenon is referred to as \emph{monodromy in 2-dimensional persistent homology}, and has been studied for the first time in \cite{CeEtFr13}. In the following we will show how to overcome this issue in order to define a \emph{coherent} modification of the standard 2-dimensional matching distance $D_{\mathrm{match}}$.

There are two different ways we can alleviate the difficulty caused by the monodromy phenomenon in order to construct a coherent 2-dimensional matching distance. We can choose to transport matchings by moving along paths in a covering of the parameter space, or we can rather define the transport of matchings along paths in the parameter space itself. In this paper we will choose this last approach.

\section{The extended Pareto grid and its main properties}\label{preparing}

In order to proceed we will assume that $M$ is a closed smooth manifold and our filtering function $f:M\to\R^2$ is sufficiently regular, in the sense described in this section. If not differently stated, we will also assume that a degree $k$ has been fixed for the computation of persistence diagrams.
\medskip

Let $f=(f_1,f_2)$ be a smooth map from a closed $C^{\infty}$-manifold $M$ of dimension $r\geq 2$ to the real plane $\R^2$. Choose a Riemannian metric on $M$ so that we can define gradients for $f_1$ and $f_2$. The \emph{Jacobi set} $\mathbb{J}(f)$ is the set of all points $p\in M$ at which the gradients of $f_1$ and $f_2$ are linearly dependent, namely $\nabla f_1(p)=\lambda\nabla f_2(p)$ or $\nabla f_2(p)=\lambda\nabla f_1(p)$ for some $\lambda\in\R$. In particular, if $\lambda\leq 0$ the point $p\in M$ is said to be a \emph{critical Pareto point} for $f$. The set of all critical Pareto points of $f$ is denoted by $\mathbb{J}_P(f)$ and is a subset of the Jacobi set $\mathbb{J}(f)$. Obviously, $\mathbb{J}_P(f)$ contains both the critical points of $f_1$ and the critical points of $f_2$.

We assume that
\begin{itemize}
    \item[$(i)$]  No point $p\in M$ exists such that both $\nabla f_1(p)$ and $\nabla f_2(p)$ vanish;
	\item[$(ii)$] $\mathbb{J}(f)$ is a smoothly embedded 1-manifold in $M$ consisting of finitely many components, each one diffeomorphic to a circle;
	\item[$(iii)$] $\mathbb{J}_P(f)$ is a 1-dimensional closed submanifold of $M$, with boundary in $\mathbb{J}(f)$.
\end{itemize}

We consider the set $\mathbb{J}_C(f)$ of \emph{cusp points} of $f$, that is, points of $\mathbb{J}(f)$ at which the restriction of $f$ to $\mathbb{J}(f)$ fails to be an immersion. In other words
$\mathbb{J}_C(f)$ is the subset of $\mathbb{J}(f)$ at which both $\nabla f_1$ and $\nabla f_2$ are orthogonal to $\mathbb{J}(f)$.

We also assume that
\smallskip

$(iv)$ The connected components of $\mathbb{J}_P(f)\setminus\mathbb{J}_C(f)$ are finite in number, each one being diffeomorphic to an interval. With respect to any parameterization of each component, one of $f_1$ and $f_2$ is strictly increasing and the other is strictly decreasing. Each component can meet critical points for $f_1,f_2$ only at its endpoints.
\smallskip

In \cite{Wa75} (see also \cite{EdHa04}) it is proved that the previous  properties $(i),(ii),(iii),(iv)$ are generic in the set of smooth maps from $M$ to $\R^2$.

Property $(iv)$ implies that the connected components of $\mathbb{J}_P(f)\setminus\mathbb{J}_C(f)$ are open, or closed, or semi-open arcs in $M$. Following the notation used in \cite{Wa75}, they will be referred to as \emph{critical intervals} of $f$. If an endpoint $p$ of a critical interval actually belongs to that critical interval and hence is not a cusp point, then it is a critical point for either $f_1$ or $f_2$.
We denote the critical intervals of $f$ by $\alpha_1,\dots,\alpha_r$, and parameterize these arcs arbitrarily, that is, $\alpha_i:I_i\to M$, with $I_i$ equal to $]0,1[$, or $]0,1]$, or $[0,1[$, or $[0,1]$.
Our assumptions also imply that both the set of critical points of $f_1$ and the set of critical points of $f_2$ are finite.

\subsection{The extended Pareto grid}

Our purpose is to establish a formal link between the position of points of $\dgm\left(f_{(a,b)}^*\right)$ for a function $f$ and the intersections of the admissible line $r_{(a,b)}$ with a particular subset of the plane $\R^2$, called the \emph{extended Pareto grid} of $f$, which we will define here.

Let us list the critical points $p_1,\ldots,p_h$ of $f_1$ and the critical points $q_1,\ldots,q_k$ of $f_2$ (our assumption $(i)$ guarantees that $\{p_1,\ldots,p_h\}\cap \{q_1,\ldots,q_k\}=\emptyset$). Consider the following closed half-lines: for each critical point $p_i$ of $f_1$ (resp. each critical point $q_j$ of $f_2$), the half-line $\{(x,y)\in\R^2 : x=f_1(p_i), y\ge f_2(p_i)\}$ (resp. the half-line $\{(x,y)\in\R^2 : x\ge f_1(q_j), y=f_2(q_j)\}$). The extended Pareto grid $\Gamma(f)$ is defined to be the union of $f(\mathbb{J}_P(f))$ with these closed half-lines. The closures of the images of critical intervals of $f$ will be called \emph{proper contours} of $f$ associated with those critical intervals of $f$, while the closed half-lines will be known as \emph{improper contours} of $f$ associated with the corresponding critical points of $f_1$ and $f_2$. We will distinguish between proper contours associated with different critical intervals and between improper contours associated with different critical points, although they can possibly coincide as sets. We observe that every contour is a closed set and the number of contours of $f$ is finite because of property $(iv)$.

Let $\mathcal{S}(f)$ be the set of all points of $\Gamma(f)$ that belong to more than one (proper or improper) contour.
If $\mathcal{S}(f)$ is finite, we say that the \emph{multiplicity} of $P\in \Gamma(f)$ is the greatest $k$ such that
for every $\varepsilon>0$
a line $r_{(a,b)}$ with $(a,b)\in{\mathcal {P}}(\Lambda^+)$ exists, verifying these two properties: $\dagger)$ $r_{(a,b)}$ does not meet $\mathcal{S}(f)$ and $\ddagger)$ the cardinality of $r_{(a,b)}\cap\Gamma(f)\cap B(P,\varepsilon)$ is $k$, where $B(P,\varepsilon)$ is the open ball of center $P$ and radius $\varepsilon$ with respect to the Euclidean distance.
In other words, the multiplicity of $P\in \Gamma(f)$ is the maximum $k$ such that we can find a line with positive slope that does not touch $\mathcal{S}(f)$ and contains $k$ points of the extended Pareto grid that are arbitrarily close to $P$.

Under the assumption that $\mathcal{S}(f)$ is finite, let $\mathcal{D}(f)$ be the set of all points $p\in \Gamma(f)$ that have multiplicity strictly greater than $1$.
We observe that $\mathcal{D}(f)\subseteq \mathcal{S}(f)$.
Each connected component of $\Gamma(f)\setminus\mathcal{D}(f)$ will be called a \emph{contour-arc} of $f$. Therefore, the contour-arcs do not contain their endpoints.

A visual intuition is given by Figure~\ref{figgrid3}, showing the extended Pareto grid of the function $f$ taking each point $p$ of the torus in Figure~\ref{figtoro1} to the pair $f(p)=(x(p),z(p))$. The images of the critical intervals are in red, the vertical half-lines with abscissa equal to a critical value of $f_1$ are in purple, and the horizontal half-lines with ordinate equal to a critical value of $f_2$ are in orange. The extended Pareto grid $\Gamma(f)$ contains the red, purple and orange points. The highlighted red points are endpoints of contours. A blue admissible line $r_{(a,b)}$ that does not meet $\mathcal{S}(f)$ is also represented. The black point belongs to $\mathcal{D}(f)$, since we can find a line with positive slope which does not touch $\mathcal{S}(f)$ and contains exactly $2$ points of the extended Pareto grid that are arbitrarily close to $P$ (see the green points in the figure). The circled point is an example of point of $\mathcal{S}(f)$ that is not multiple.

\begin{figure}
\begin{center}
\begin{tikzpicture}[scale=1.0]
\node at (0,0) {\includegraphics[width=0.4\textwidth,angle=90]{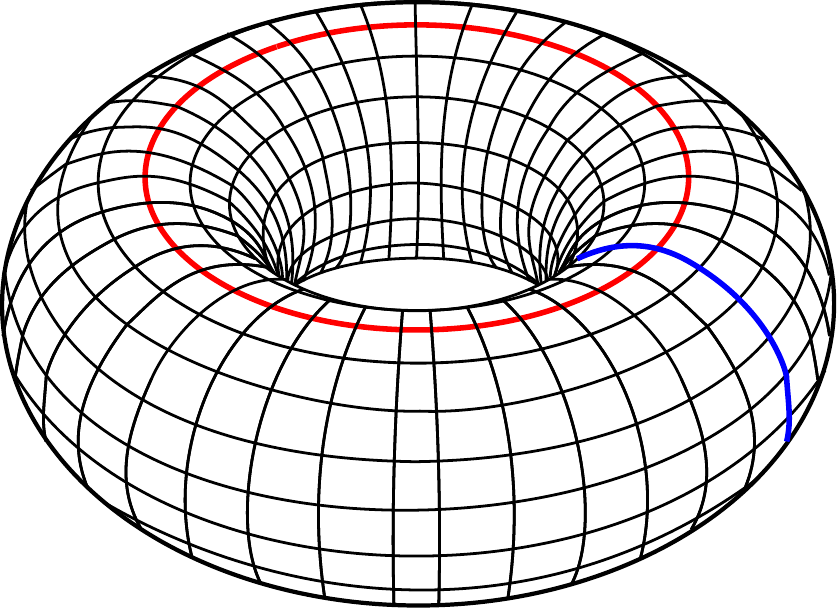}};
\draw [line width=0.8] (-0.15,-0.03) -- (-0.15,0.77);
\draw [dotted,line width=0.8] (-0.15,0.77) -- (-0.15,2.50);
\draw [->,line width=0.8] (-0.15,2.50) -- (-0.15,3.1);
\draw [line width=0.8] (-0.15,-0.03) -- (0.03,-0.15);
\draw [dotted,line width=0.8] (0.03,-0.15) -- (1.40,-1.03);
\draw [->,line width=0.8] (1.40,-1.03) -- (2.0,-1.42);


\draw (-0.35,2.9) node {$z$};
\draw (1.8,-1.55) node {$x$};
\end{tikzpicture}
\end{center}\caption{The torus endowed with the filtering function $f(p):=(x(p),z(p))$.}\label{figtoro1}
\end{figure}

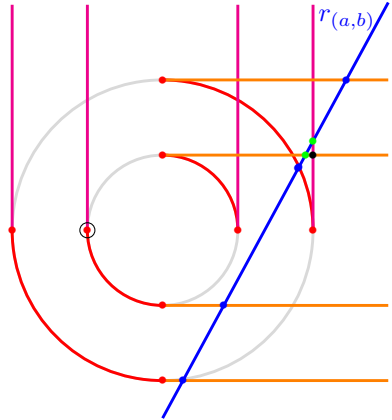
\begin{figure}
\begin{center}

\begin{tikzpicture}[scale=1.0]
\draw [gray!30,line width=1.1] (0,0) circle (1);
\draw [gray!30,line width=1.1] (0,0) circle (2);
\draw [red,line width=1.1] (1,0) arc (0:90:1);
\draw [red,line width=1.1] (2,0) arc (0:90:2);
\draw [red,line width=1.1] (-1,0) arc (180:270:1);
\draw [red,line width=1.1] (-2,0) arc (180:270:2);

\foreach \x in {-2,-1,1,2}
{
	\draw [magenta,line width=1.1] (\x,0) -- (\x,3);
	\draw [orange,line width=1.1] (0,\x) -- (3,\x);

	\draw [red] (\x,0) node {{\tiny $\bullet$}};
	\draw [red] (0,\x) node {{\tiny $\bullet$}};
}

\draw [blue,line width=1.1] (0,-2.5) -- (3,3.03);
\draw [blue] (2.45,2.8) node {$r_{(a,b)}$};

\draw [blue] (0.27,-2) node {{\tiny $\bullet$}};
\draw [blue] (0.81,-1) node {{\tiny $\bullet$}};
\draw [blue] (1.8,0.82) node {{\tiny $\bullet$}};
\draw [blue] (1.81,0.835) node {{\tiny $\bullet$}};
\draw [green] (1.9,1) node {{\tiny $\bullet$}};
\draw [green] (2,1.18) node {{\tiny $\bullet$}};
\draw [black] (2,1) node {{\tiny $\bullet$}};
\draw [blue] (2.44,2) node {{\tiny $\bullet$}};
\draw (-1,-0) circle (0.1cm);
\end{tikzpicture}
\end{center}\caption{The extended Pareto grid for the torus in Figure~\ref{figtoro1}, endowed with the filtering function $f(p):=(x(p),z(p))$.}\label{figgrid3}
\end{figure}

\subsection{Assumptions about the extended Pareto grid}

We recall that, by definition, a pair $(a,b)\in \, ]0,1[\times\R$ is  \emph{singular} for $f$ if and only if the set  $\dgm\left(f_{(a,b)}^*\right)\setminus \{\Delta\}$ contains at least one point having multiplicity strictly greater than 1. A pair $(a,b)$ that is not singular is called \emph{regular}.

\begin{definition}\label{defnassumptions}
We say that the function $f:M\to\R^2$ verifying the properties $(i)-(iv)$ is \emph{normal} if the following statements also hold:
\begin{enumerate}
\item \label{defnassumptionsnummult} The set $\mathcal{S}(f)$ is finite;
\item \label{defnassumptionsmultdoub} Each point of the set $\mathcal{D}(f)$ is double;
\item \label{defnassumptionsnoline} No line $r_{(a,b)}$ exists containing more than two points of $\mathcal{D}(f)$;
\item \label{defnassumptionscontourarcs}
Every contour-arc $\gamma$ of $f$ is associated with a pair $(d(\gamma),s(\gamma))\in\Z\times\{-1,1\}$
such that at each point $(u,v)$ of $\gamma$ the following properties hold for every small enough $\varepsilon>0$, when $i^k_*$ is
the map $H_k(M_{(u-\varepsilon,v-\varepsilon)})\to H_k(M_{(u+\varepsilon,v+\varepsilon)})$ induced by the inclusion $M_{(u-\varepsilon,v-\varepsilon)}\hookrightarrow M_{(u+\varepsilon,v+\varepsilon)}$:
\begin{itemize}
  \item If $k\neq d(\gamma)$, $i^k_*$ is an isomorphism;
  \item If $k= d(\gamma)$ and $s(\gamma)=1$, $i^k_*$ is injective and $\textrm{rank} \left(H_k(M_{(u+\varepsilon,v+\varepsilon)})\right)=
      \textrm{rank} \left(H_k(M_{(u-\varepsilon,v-\varepsilon)})\right)+1$;
  \item If $k= d(\gamma)$ and $s(\gamma)=-1$, $i^k_*$ is surjective and $\textrm{rank} \left(H_k(M_{(u+\varepsilon,v+\varepsilon)})\right)=
      \textrm{rank} \left(H_k(M_{(u-\varepsilon,v-\varepsilon)})\right)-1$.
\end{itemize}

\end{enumerate}
\end{definition}

\begin{remark}\label{remcontour}
It is not difficult to prove that in Property~(\ref{defnassumptionscontourarcs}) of Definition~\ref{defnassumptions} the two groups $H_k(M_{(u-\varepsilon,v-\varepsilon)})$ and $H_k(M_{(u+\varepsilon,v+\varepsilon)})$ can be replaced by the groups $H_k(M_{(u-a\varepsilon,v-(1-a)\varepsilon)})$, $H_k(M_{(u+a\varepsilon,v+(1-a)\varepsilon)})$
for any fixed $a\in\,]0,1[$ without changing the concept of normal function. In plain words, Property~(\ref{defnassumptionscontourarcs}) guarantees that the passage across a contour-arc $\gamma$ along any direction $(a,1-a)$ just creates ($s(\gamma)=1$) or destroys ($s(\gamma)=-1$) exactly one homological class in degree $d(\gamma)$, without producing any homological change in the other degrees (see Figure~\ref{figgrid4}). According to \cite{CeLa16}, this implies that the multiplicity of the points of each contour-arc is $1$ in degree $d(\gamma)$.
\end{remark}

Figure~\ref{figgrid4} shows the contour-arcs and the set $\mathcal{D}(f)$ (in white) for the function taking each point $p$ of the torus in Figure~\ref{figtoro1} to the pair $f(p)=(x(p),z(p))$. Each of the two {\bf \color{magenta}magenta} contour-arcs corresponds to the birth of a homology class in degree $0$ (i.e. $(d(\gamma),s(\gamma))=(0,1)$). Each of the ten {\bf \color{black}black} contour-arcs corresponds to the birth of a homology class in degree $1$ (i.e. $(d(\gamma),s(\gamma))=(1,1)$). Each of the two {\bf \color{blue}blue} contour-arcs corresponds to the birth of a homology class in degree $2$ (i.e. $(d(\gamma),s(\gamma))=(2,1)$). Each of the two {\bf \color{red}red} contour-arcs corresponds to the death of a homology class in degree $0$ (i.e. $(d(\gamma),s(\gamma))=(0,-1)$). Each of the four {\bf \color{green}green} contour-arcs corresponds to the death of a homology class in degree $1$ (i.e. $(d(\gamma),s(\gamma))=(1,-1)$). Note that the homological event associated with the points of a contour of $f$ can change along the considered contour. This justifies the choice of using the concept of contour-arc instead of the one of contour in Property~(\ref{defnassumptionscontourarcs}).

\begin{figure}
\begin{center}
\begin{tikzpicture}[scale=1.0]
\draw [magenta,line width=1.1] (1,0) arc (0:90:1);
\draw [black,line width=1.1] (2,0) arc (0:90:2);
\draw [black,line width=1.1] (-1,0) arc (180:270:1);
\draw [magenta,line width=1.1] (-2,0) arc (180:270:2);

\draw [red,line width=1.1] (1,0) -- (1,1);
\draw [red,line width=1.1] (0,1) -- (1,1);

\draw [black,line width=1.1] (1,1) -- (3,1);
\draw [black,line width=1.1] (1,1) -- (1,3);

\draw [black,line width=1.1] (-1,0) -- (-1,3);
\draw [magenta,line width=1.1] (-2,0) -- (-2,3);
\draw [black,line width=1.1] (0,-1) -- (3,-1);
\draw [magenta,line width=1.1] (0,-2) -- (3,-2);
\draw [green,line width=1.1] (0,2) -- (2,2);
\draw [green,line width=1.1] (2,0) -- (2,2);
\draw [blue,line width=1.1] (2,2) -- (2,3);
\draw [blue,line width=1.1] (2,2) -- (3,2);

\draw [line width=0.1,black,fill=white] (1,1.73) circle (0.04);
\draw [line width=0.1,black,fill=white] (1.73,1) circle (0.04);

\draw [line width=0.1,black,fill=white] (1,0) circle (0.04);
\draw [line width=0.1,black,fill=white] (2,0) circle (0.04);
\draw [line width=0.1,black,fill=white] (0,1) circle (0.04);
\draw [line width=0.1,black,fill=white] (0,2) circle (0.04);

\foreach \x in {1,2}
{
	\foreach \y in {1,2}
	{
		\draw [line width=0.1,black,fill=white] (\x,\y) circle (0.04);
	}
}

\end{tikzpicture}
\end{center}\caption{The connected components obtained by deleting the double points (in white) from $\Gamma(f)$ are the contour-arcs for the torus in Figure~\ref{figtoro1}, endowed with the filtering function $f(p):=(x(p),z(p))$. In this example $\Gamma(f)$ contains 20 contour-arcs.
}\label{figgrid4}
\end{figure}
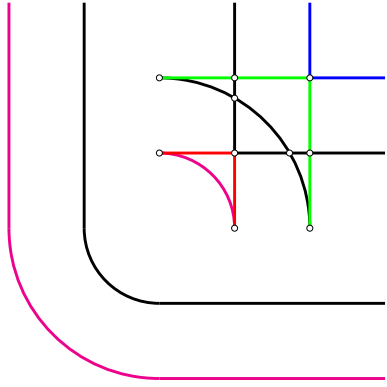

In the rest of this paper we will assume that the function $f:M\to\R^2$ is normal.


\subsection{The Position Theorem}
We recall that
$$f^*_{(a,b)}(p):=\min\{a,1-a\}\cdot \max\left\{\frac{f_1(p)-b}{a},\frac{f_2(p)+b}{1-a}\right\}$$ for
every $p\in M$.
With the concept of extended Pareto grid at hand, we can state and prove the following result, which gives a   necessary condition for $P$ to be a point of $\dgm\left(f_{(a,b)}^*\right)$.

\begin{theorem}[Position Theorem]\label{GT*}
Let $(a,b)\in {\mathcal {P}}(\Lambda^+)$, $P\in\dgm\left(f_{(a,b)}^*\right)\setminus \{\Delta\}$. Then, for each finite coordinate $w$ of $P$  a point $(x,y)\in r_{(a,b)}\cap\Gamma(f)$
exists, such that $w=\frac{\min\{a,1-a\}}{a}\cdot (x-b)=\frac{\min\{a,1-a\}}{1-a}\cdot (y+b)$.
\end{theorem}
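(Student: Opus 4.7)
The equalities in the conclusion are a purely algebraic translation. Setting $t := w/\min\{a,1-a\}$ and $(x,y) := (u(t),v(t)) = (at+b,(1-a)t-b)$, the formulas $w = \frac{\min\{a,1-a\}}{a}(x-b) = \frac{\min\{a,1-a\}}{1-a}(y+b)$ follow by direct substitution. Moreover, $\dgm\!\left(f^*_{(a,b)}\right)$ is obtained from $\dgm\!\left(f_{(a,b)}\right)$ by the uniform rescaling $c\mapsto \min\{a,1-a\}\cdot c$, so the hypothesis $P\in\dgm\!\left(f^*_{(a,b)}\right)\setminus\{\Delta\}$ with finite coordinate $w$ is equivalent to the corresponding point of $\dgm\!\left(f_{(a,b)}\right)$ having the finite coordinate $t$. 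The whole theorem therefore reduces to proving $(x,y)\in\Gamma(f)$.

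We would argue by contrapositive. Assume $(x,y)\notin\Gamma(f)$. By Definition~\ref{Multiplicity}, $t$ being a finite coordinate of a non-diagonal point of $\dgm\!\left(f_{(a,b)}\right)$ forces the inclusion $M^{a,b}_{t-\varepsilon}\hookrightarrow M^{a,b}_{t+\varepsilon}$ to fail being an $H_k$-isomorphism for every sufficiently small $\varepsilon>0$. Since $\Gamma(f)$ is a finite union of closed contours it is closed, so an open rectangle $R$ around $(x,y)$ disjoint from $\Gamma(f)$ can be chosen; for $\varepsilon$ small enough the whole segment $\{(u(t'),v(t')):|t'-t|\le\varepsilon\}$ lies in $R$. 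The plan is to prove that the above inclusion is then an $H_k$-isomorphism, giving the desired contradiction.

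The crucial claim is that whenever $(u(t'),v(t'))\notin\Gamma(f)$, the parameter $t'$ is a regular value of $f_{(a,b)}=\max\{(f_1-b)/a,(f_2+b)/(1-a)\}$ in a min-max sense. At a generalized critical point $p\in f_{(a,b)}^{-1}(t')$ three cases arise. If $(f_1(p)-b)/a>(f_2(p)+b)/(1-a)$ then $f_{(a,b)}$ equals $(f_1-b)/a$ near $p$, so $p$ must be critical for $f_1$; one then checks that $u(t')=f_1(p)$ and that the strict inequality forces $v(t')>f_2(p)$, placing $(u(t'),v(t'))$ on the improper vertical contour through $f(p)$. The symmetric situation yields an improper horizontal contour. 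Finally, if the two arguments of the maximum coincide at $p$, then $f(p)=(u(t'),v(t'))$ and the vanishing of the Clarke subdifferential forces $\nabla f_1(p)$ and $\nabla f_2(p)$ to be oppositely directed, so $p\in\mathbb{J}_P(f)$ and $(u(t'),v(t'))\in f(\mathbb{J}_P(f))$. In every case $(u(t'),v(t'))\in\Gamma(f)$, contradicting $R\cap\Gamma(f)=\emptyset$.

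The main obstacle will be turning \emph{no generalized critical values of $f_{(a,b)}$ in $[t-\varepsilon,t+\varepsilon]$} into \emph{the sublevel-set inclusion is an $H_k$-isomorphism}. Because $f_{(a,b)}$ is only Lipschitz along the fold $\{(f_1-b)/a=(f_2+b)/(1-a)\}$, the smooth Morse deformation lemma does not apply as-is. The natural remedies are either to invoke a min-max version of the deformation lemma, or, more in the spirit of this paper, to exploit normality property~(\ref{defnassumptionscontourarcs}) via a covering/chaining argument: subdivide $[t-\varepsilon,t+\varepsilon]$ into pieces so small that the swept rectangle over each piece stays in $R$ and hence avoids every contour-arc, and then compose the resulting $H_k$-isomorphisms.
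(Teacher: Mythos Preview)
Your strategy is sound and, in substance, reconstructs what the paper obtains in one stroke by citing Theorem~3.2 of \cite{CeFr14}. That external result says precisely that any finite coordinate of a point in $\dgm\left(f_{(a,b)}\right)$ arises from a point $p\in M$ falling into one of the three alternatives you list (critical for $f_1$ with the first branch dominating, critical for $f_2$ with the second branch dominating, or $p\in\mathbb J_P(f)$ on the fold). The paper then performs exactly the coordinate translation you sketch to land $(x,y)$ on an improper or proper contour. So your trichotomy and your case analysis are correct and mirror the paper's three cases; the difference is only that you are reproving the cited theorem rather than invoking it.

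Regarding the obstacle you flag: your first remedy is the right one, and is essentially what \cite{CeFr14} carries out --- a deformation/retraction argument for the piecewise-smooth function $\max\{(f_1-b)/a,(f_2+b)/(1-a)\}$ using a Clarke-subdifferential (or stratified) notion of critical point. Your second remedy, however, does not work as stated. Normality property~(\ref{defnassumptionscontourarcs}) describes the homological change that occurs \emph{at} a point of a contour-arc; it says nothing about points of $\R^2\setminus\Gamma(f)$, which is exactly where your rectangle $R$ sits, so it cannot be used to certify that the inclusions over $R$ are $H_k$-isomorphisms. (Indeed, the paper's proof of the Position Theorem makes no use of normality at all.) If you want a self-contained argument, stay with the Lipschitz deformation lemma.
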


\begin{proof}
By applying Theorem~3.2 in  \cite{CeFr14} and recalling that the points of $\dgm\left(f_{(a,b)}^*\right)$ are obtained by multiplying the ones of $\dgm\left(f_{(a,b)}\right)$ by the factor $\min\{a,1-a\}$, we obtain that a point $p\in M$ exists such that one of the following statements holds:
\begin{enumerate}
\item $\nabla f_1(p)=0$, and $w=\frac{\min\{a,1-a\}}{a}\cdot (f_1(p)-b)>\frac{\min\{a,1-a\}}{1-a}\cdot (f_2(p)+b)$;
\item $\nabla f_2(p)=0$, and $w=\frac{\min\{a,1-a\}}{1-a}\cdot (f_2(p)+b)>\frac{\min\{a,1-a\}}{a}\cdot (f_1(p)-b)$;
\item $p\in\mathbb{J}_P(f)$, and $w=\frac{\min\{a,1-a\}}{a}\cdot (f_1(p)-b)=\frac{\min\{a,1-a\}}{1-a}\cdot (f_2(p)+b)$.
\end{enumerate}

Assume that (1) holds, and recall that the admissible line $r_{(a,b)}$ is parameterized by $t$ and has equation $(u(t),v(t))=t\cdot(a,1-a)+(b,-b)$. Looking for the point of $r_{(a,b)}$ whose abscissa is $f_1(p)$, we find  $(x,y):=\left(\frac{aw}{\min\{a,1-a\}}+b,\frac{(1-a)w}{\min\{a,1-a\}}-b\right)$. Since $w>\frac{\min\{a,1-a\}}{1-a}\cdot (f_2(p)+b)$, we have that
$y>f_2(p)$. This means that at $(x,y)$ the line $r_{(a,b)}$ meets the vertical (open) half-line $r: x=f_1(p),y>f_2(p)$, which is part of the extended Pareto grid (recall that, by (1), $p$ is a critical point for $f_1$).
Therefore, $w=\frac{\min\{a,1-a\}}{a}\cdot (x-b)=\frac{\min\{a,1-a\}}{1-a}\cdot (y+b)$ with $(x,y)\in r_{(a,b)}\cap\Gamma(f)$.

We skip the case in which (2) holds, because it is completely analogous to the one just considered.

To conclude the proof, assume now that (3) holds.
We know that the point $(f_1(p),f_2(p))$ belongs to $\Gamma(f)$, because $p\in\mathbb{J}_P(f)$. Given that the admissible line $r_{(a,b)}$ is parameterized by $t$ and has equation $(u(t),v(t))=t\cdot(a,1-a)+(b,-b)$, by taking $t=\frac{w}{\min\{a,1-a\}}$ we have that $(f_1(p),f_2(p))=(u(t),v(t))$ belongs also to $r_{(a,b)}$. By setting $(x,y):=(f_1(p),f_2(p))\in r_{(a,b)}\cap \Gamma(f)$ we get $w=\frac{\min\{a,1-a\}}{a}\cdot (x-b)=\frac{\min\{a,1-a\}}{1-a}\cdot (y+b)$.
This yields the claim.
\end{proof}

The Position Theorem~\ref{GT*} suggests a way to find the possible positions for points of $\dgm\left(f_{(a,b)}^*\right)$. It consists in drawing the extended Pareto grid $\Gamma(f)$ and considering its intersections $(x_1,y_1),\ldots, (x_l,y_l)$ with the admissible line $r_{(a,b)}$. For each point of $\dgm\left(f_{(a,b)}^*\right)\setminus \{\Delta\}$, both its coordinates belong to the set
\begin{equation}\label{coord}
\left\{
\frac{\min\{a,1-a\}}{a}\cdot (x_i-b)=\frac{\min\{a,1-a\}}{1-a}\cdot (y_i+b)\right\}_{1\le i\le l}\cup \{\infty\}.
\end{equation}
Note that when $b<0$ and $|b|$ is sufficiently large, the admissible line $r_{(a,b)}$ may intersect $\Gamma(f)$ only at the vertical half-lines (see line $r_{(a,b')}$ in Figure~\ref{figgrid5}). In this case, $f^*_{(a,b)}:=\frac{\min\{a,1-a\}}{a}\cdot (f_1-b)$, and the values $x_1,\dots,x_l$ in (\ref{coord}) are the critical values of $f_1$. Similarly, when $b>0$ and $|b|$ is large enough, $r_{(a,b)}$ intersects $\Gamma(f)$ only at the horizontal half-lines (see line $r_{(a,b'')}$ in Figure~\ref{figgrid5}). Then $f^*_{(a,b)}:=\frac{\min\{a,1-a\}}{1-a}\cdot (f_2+b)$, and the values $y_1,\dots,y_l$ in (\ref{coord}) are the critical values of $f_2$.

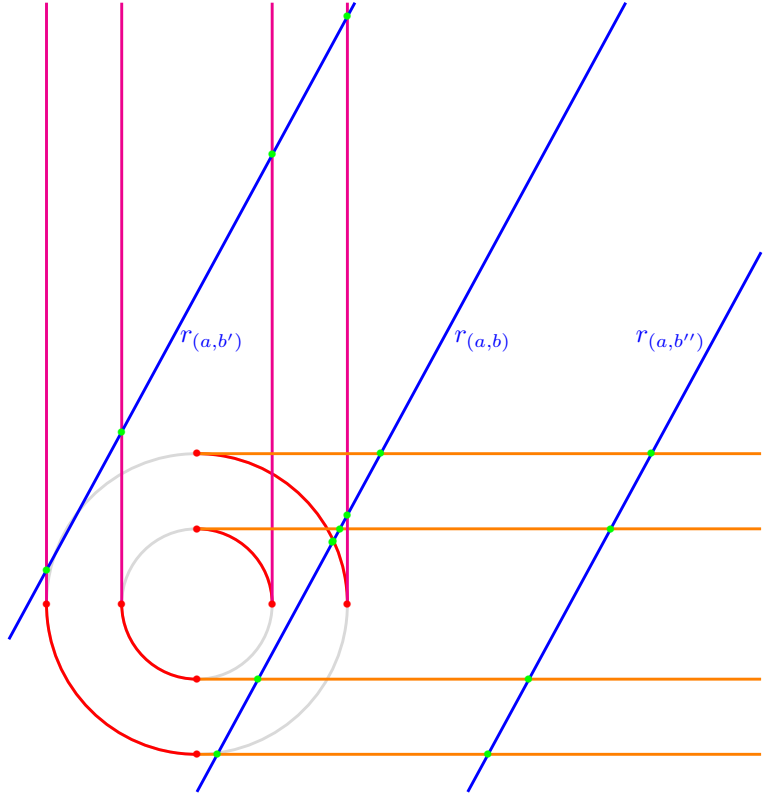
\begin{figure}
\begin{center}
\begin{tikzpicture}[scale=1.0]
\draw [gray!30,line width=1.1] (0,0) circle (1);
\draw [gray!30,line width=1.1] (0,0) circle (2);
\draw [red,line width=1.1] (1,0) arc (0:90:1);
\draw [red,line width=1.1] (2,0) arc (0:90:2);
\draw [red,line width=1.1] (-1,0) arc (180:270:1);
\draw [red,line width=1.1] (-2,0) arc (180:270:2);

\foreach \x in {-2,-1,1,2}
{
	\draw [magenta,line width=1.1] (\x,0) -- (\x,8);
	\draw [orange,line width=1.1] (0,\x) -- (7.5,\x);

	\draw [red] (\x,0) node {{\tiny $\bullet$}};
	\draw [red] (0,\x) node {{\tiny $\bullet$}};
}

\draw [blue,line width=1.1] (-2.5,-0.47) -- (2.1,8);
\draw [blue] (0.2,3.5) node {$r_{(a,b')}$};
\draw [blue,line width=1.1] (0,-2.5) -- (5.7,8);
\draw [blue] (3.8,3.5) node {$r_{(a,b)}$};
\draw [blue,line width=1.1] (3.6,-2.5) -- (7.5,4.68);
\draw [blue] (6.3,3.5) node {$r_{(a,b'')}$};

\draw [green] (-2,0.45) node {{\tiny $\bullet$}};
\draw [green] (-1,2.29) node {{\tiny $\bullet$}};
\draw [green] (1,5.98) node {{\tiny $\bullet$}};
\draw [green] (2,7.82) node {{\tiny $\bullet$}};

\draw [green] (0.27,-2) node {{\tiny $\bullet$}};
\draw [green] (0.81,-1) node {{\tiny $\bullet$}};
\draw [green] (1.8,0.82) node {{\tiny $\bullet$}};
\draw [green] (1.81,0.835) node {{\tiny $\bullet$}};
\draw [green] (1.9,1) node {{\tiny $\bullet$}};
\draw [green] (2,1.18) node {{\tiny $\bullet$}};
\draw [green] (2.44,2) node {{\tiny $\bullet$}};

\draw [green] (3.87,-2) node {{\tiny $\bullet$}};
\draw [green] (4.41,-1) node {{\tiny $\bullet$}};
\draw [green] (5.5,1) node {{\tiny $\bullet$}};
\draw [green] (6.04,2) node {{\tiny $\bullet$}};
\end{tikzpicture}
\end{center}\caption{When $b<0$ and $|b|$ is large enough, the line $r_{(a,b)}$ intersects only the vertical half-lines
in the extended Pareto grid. When $b>0$ and $|b|$ is large enough, the line $r_{(a,b)}$ intersects only the horizontal half-lines
in the extended Pareto grid.}
\label{figgrid5}
\end{figure}

\subsection{Pairing of contour-arcs}
\label{pairing}

Two contour-arcs $\gamma_1,\gamma_2$ for $f$ are called \emph{paired to each other with respect to $r_{(a,b)}$} if $r_{(a,b)}$ meets both $\gamma_1$ and $\gamma_2$ at two respective points $(x_1,y_1)$, $(x_2,y_2)$, and $\frac{\min\{a,1-a\}}{a}\cdot \left(x_1-b,x_2-b\right)\in \dgm\left(f_{(a,b)}^*\right)$ (or, equivalently, $\frac{\min\{a,1-a\}}{1-a}\cdot \left(y_1+b,y_2+b\right)\in \dgm\left(f_{(a,b)}^*\right)$).
In plain words, two contour-arcs are paired to each other with respect to $r_{(a,b)}$ if one of them is associated with the birth of a homological class in the filtration given by $f_{(a,b)}^*$ and the other is associated with the death of the same homological class in the same filtration.
By applying the Position Theorem~\ref{GT*} and the Stability Theorem~\ref{stability},
it is easy to check that if the contour-arcs $\gamma_1,\gamma_2$ are paired to each other with respect to $r_{(a,b)}$, and $r_{(a',b')}$ is an admissible line meeting both $\gamma_1$ and $\gamma_2$ at two respective points $(x'_1,y'_1)$, $(x'_2,y'_2)$, then $\gamma_1,\gamma_2$ are paired to each other with respect to $r_{(a',b')}$ as well. We underline that each contour-arc can be paired to different contour-arcs with respect to different admissible lines.

\subsection{Localization of singular pairs by the Position Theorem}
\label{locsing}

The Position Theorem allows us to deduce where singular pairs can be in ${\mathcal {P}}(\Lambda^+)$.

\begin{proposition}\label{propldp}
Let $\left(\bar a,\bar b\right)\in {\mathcal {P}}(\Lambda^+)$ be a singular pair for $f$. If $\dgm\left(f_{\left(\bar a,\bar b\right)}^*\right)$ contains a proper multiple point, then $r_{\left(\bar a,\bar b\right)}$ contains two points of $\mathcal{D}(f)$.
If $\dgm\left(f_{\left(\bar a,\bar b\right)}^*\right)$ contains an improper multiple point, then $r_{\left(\bar a,\bar b\right)}$ contains at least one point of $\mathcal{D}(f)$.
\end{proposition}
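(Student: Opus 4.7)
The plan is to combine the Position Theorem~\ref{GT*} with the local homological analysis of normal functions supplied by Remark~\ref{remcontour}. First, I would observe that the correspondence given by Theorem~\ref{GT*}, sending a finite coordinate $w$ of a point of $\dgm\left(f_{(\bar a,\bar b)}^*\right)\setminus\{\Delta\}$ to the associated point $(x,y)\in r_{(\bar a,\bar b)}\cap\Gamma(f)$, is in fact \emph{injective}: the identity $w=\frac{\min\{\bar a,1-\bar a\}}{\bar a}(x-\bar b)=\frac{\min\{\bar a,1-\bar a\}}{1-\bar a}(y+\bar b)$, combined with the parameterization $(x,y)=t(\bar a,1-\bar a)+(\bar b,-\bar b)$ of $r_{(\bar a,\bar b)}$, forces $t=w/\min\{\bar a,1-\bar a\}$ and hence $(x,y)$ to be uniquely determined by $w$.

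Next I would establish the central local correspondence: in the filtration induced by $f_{(\bar a,\bar b)}^*$, the number of homology classes in degree $k$ born (resp. dying) at a finite coordinate $w$ equals the number of contour-arcs $\gamma$ with $d(\gamma)=k$ and $s(\gamma)=+1$ (resp.~$s(\gamma)=-1$) passing through the associated point $(x,y)\in r_{(\bar a,\bar b)}\cap\Gamma(f)$. This follows from Remark~\ref{remcontour}, applied along the direction $(\bar a,1-\bar a)$: each contour-arc through $(x,y)$ contributes exactly one birth or death, and properties~(\ref{defnassumptionsnummult})--(\ref{defnassumptionsmultdoub}) ensure that at most two contour-arcs can meet at $(x,y)$, so their contributions add independently in any sufficiently small neighborhood of the corresponding parameter value.

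Both halves of the proposition then follow from Definition~\ref{Multiplicity}. For a proper multiple point $(w_1,w_2)$ with $\mu_{f_{(\bar a,\bar b)}^*}(w_1,w_2)\ge 2$, the rank formula forces at least two classes to be born at $w_1$ and at least two to die at $w_2$; by the correspondence above, at least two contour-arcs pass through each of the two associated points $(x_1,y_1),(x_2,y_2)\in r_{(\bar a,\bar b)}\cap\Gamma(f)$, so both belong to $\mathcal{D}(f)$, and the injectivity step together with $w_1<w_2<\infty$ gives $(x_1,y_1)\neq (x_2,y_2)$. For an improper multiple point $(w,\infty)$ of multiplicity $\ge 2$, the corresponding formula for $\mu_{f_{(\bar a,\bar b)}^*}(w,\infty)$ yields at least two births at $w$, placing the associated point of $r_{(\bar a,\bar b)}\cap\Gamma(f)$ in $\mathcal{D}(f)$.

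The main obstacle I foresee is the second step: translating the purely algebraic notion of multiplicity, defined as an alternating sum of Betti numbers, into a geometric count of contour-arcs passing through a single point of $r_{(\bar a,\bar b)}$. Making this rigorous requires a careful use of the normality assumptions~(\ref{defnassumptionsnummult})--(\ref{defnassumptionscontourarcs})---in particular, the finiteness of $\mathcal{S}(f)$ and the doubleness of points of $\mathcal{D}(f)$---to ensure that choosing $\varepsilon>0$ small enough isolates the homological contributions at $(x,y)$ from those arising at nearby intersections of $r_{(\bar a,\bar b)}$ with $\Gamma(f)$.
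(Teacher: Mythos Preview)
Your approach is valid but takes a genuinely different route from the paper. The paper argues by \emph{perturbation}: given a proper multiple point in $\dgm\left(f_{(\bar a,\bar b)}^*\right)$, it chooses a nearby line $r_{(a',b')}$ that avoids the finite set $\mathcal{S}(f)$, invokes the Stability Theorem~\ref{stability} to obtain two distinct proper points of $\dgm\left(f_{(a',b')}^*\right)$ arbitrarily close to each other, and then applies the Position Theorem~\ref{GT*} on $r_{(a',b')}$ to produce two nearby intersections with $\Gamma(f)$. Letting $(a',b')\to(\bar a,\bar b)$, this matches the very definition of $\mathcal{D}(f)$ (multiplicity $\ge 2$ in the grid), yielding the two desired points directly. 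The improper case is handled identically.

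Your route instead works \emph{at the singular line itself}: you use Remark~\ref{remcontour} to read off the number of births/deaths at each parameter value from the contour-arcs through the corresponding intersection point, and then argue contrapositively that a point lying on a single contour-arc could only account for a single birth or death. This is sound in spirit, and your injectivity observation is correct and useful. The cost is exactly the obstacle you flag: Remark~\ref{remcontour} is stated for points \emph{of} contour-arcs, i.e.\ points of $\Gamma(f)\setminus\mathcal{D}(f)$, so your equality ``number of births $=$ number of contour-arcs through $(x,y)$'' is not literally available at the points you care about (those in $\mathcal{D}(f)$). What you really need, and what suffices, is the weaker contrapositive statement ``if $(x,y)\notin\mathcal{D}(f)$ then at most one birth or death at $w$'', which does follow from Remark~\ref{remcontour}; you might tighten your write-up accordingly. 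The paper's perturbation argument buys exactly this: by moving to a line that misses $\mathcal{S}(f)$ entirely, it never has to analyse the homology at a point of $\mathcal{D}(f)$, and the conclusion drops out of the definition of grid multiplicity with no further local analysis.
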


\begin{proof}
Let us first assume that $\dgm\left(f_{\left(\bar a,\bar b\right)}^*\right)$ contains a proper multiple point $(\bar u,\bar v)$.
We can find a line $r_{(a',b')}$ that is arbitrarily close to $r_{\left(\bar a,\bar b\right)}$ and does not meet $\mathcal{S}(f)$.
Because of the Stability Theorem~\ref{stability}, $\dgm\left(f_{(a',b')}^*\right)$ must contain two proper points arbitrarily close to each other. Therefore, the Position Theorem~\ref{GT*} and the definition of the set $\mathcal{D}(f)$ imply that $r_{\left(\bar a,\bar b\right)}$ contains two double points of $\Gamma(f)$.

Let us now assume that $\dgm\left(f_{\left(\bar a,\bar b\right)}^*\right)$ contains an improper multiple point.
Also in this case let us consider a line $r_{(a',b')}$ that is arbitrarily close to $r_{\left(\bar a,\bar b\right)}$ and does not meet $\mathcal{S}(f)$.
Because of the Stability Theorem~\ref{stability}, $\dgm\left(f_{(a',b')}^*\right)$ must contain two improper points arbitrarily close to each other. Therefore, the Position Theorem~\ref{GT*} and the definition of the set $\mathcal{D}(f)$ imply that $r_{\left(\bar a,\bar b\right)}$ contains at least one double point of $\Gamma(f)$.
\end{proof}

Figure~\ref{figdp} illustrates the statement in Proposition~\ref{propldp} in the case of a proper double point of $\dgm\left(f_{\left(\bar a,\bar b\right)}^*\right)$.

\begin{figure}
\begin{center}
\begin{tikzpicture}[scale=1.0]
\draw [line width=1.2] (0,0) -- (6,5);
\draw (5.8,4.3) node {$r_{\left(\bar a,\bar b\right)}$};
\draw [red,line width=1.2] (2,0.28) arc (30:70:3);
\draw [red,line width=1.2] (1.52,0) arc (0:40:3);
\draw [line width=0.1,black,fill=white] (1.31,1.09) circle (0.04);

\draw [red,line width=1.2] (5.5,3.20) arc (30:70:3);
\draw [red,line width=1.2] (5.02,2.92) arc (0:40:3);
\draw [line width=0.1,black,fill=white] (4.81,4.01) circle (0.04);
\end{tikzpicture}
\end{center}\caption{A line $r_{\left(\bar a,\bar b\right)}$ associated with a singular pair $\left(\bar a,\bar b\right)\in{\mathcal {P}}(\Lambda^+)$, in case $\dgm\left(f_{\left(\bar a,\bar b\right)}^*\right)$ contains a proper double point. Parts of four contours (split in eight proper contour-arcs) are displayed in red.}
\label{figdp}
\end{figure}
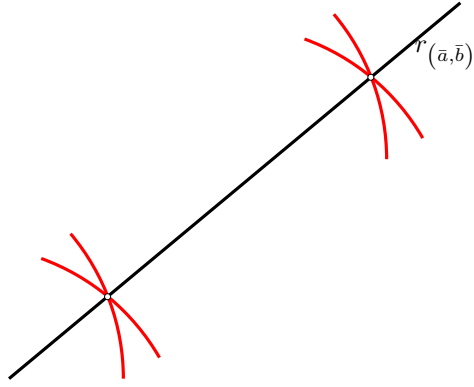

We conclude this subsection by giving some results that will be of use in the paper.

\begin{corollary}\label{corfiniteness}
The multiset $\dgm\left(f_{(a,b)}^*\right)\setminus \{\Delta\}$ is finite for every $(a,b)\in{\mathcal {P}}(\Lambda^+)$.
\end{corollary}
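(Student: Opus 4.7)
The plan is to combine the Position Theorem~\ref{GT*} with a monotonicity argument about the extended Pareto grid to force the intersection $r_{(a,b)}\cap\Gamma(f)$ to be finite, and then use the finiteness of $\beta_f$ everywhere on $\Delta^+$ to bound the multiplicities.

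First I would show that any admissible line $r_{(a,b)}$ meets $\Gamma(f)$ in only finitely many points. Decompose $\Gamma(f)$ into its finitely many proper contours (one for each critical interval $\alpha_i$, of which there are finitely many by property $(iv)$) and its finitely many improper contours (one vertical half-line per critical point of $f_1$ and one horizontal half-line per critical point of $f_2$, of which there are finitely many since $M$ is compact and our genericity assumptions ensure isolated critical points). On $r_{(a,b)}$ the parameterization $(u(t),v(t)) = (at+b,(1-a)t-b)$ with $a\in\,]0,1[$ makes both coordinates strictly increasing in $t$, so $r_{(a,b)}$ is the graph of a strictly increasing function. Along any critical interval, property $(iv)$ says exactly one of $f_1,f_2$ is strictly increasing and the other is strictly decreasing, so the associated proper contour is the graph of a strictly monotone decreasing function. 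A strictly increasing line meets a strictly decreasing curve in at most one point, so each proper contour contributes at most one intersection. Each improper contour is a horizontal or vertical half-line and also meets $r_{(a,b)}$ in at most one point. Summing over finitely many contours gives $\#\bigl(r_{(a,b)}\cap\Gamma(f)\bigr)<\infty$.

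Next I would invoke the Position Theorem~\ref{GT*}: every point $P\in\dgm(f^*_{(a,b)})\setminus\{\Delta\}$ has each of its finite coordinates lying in the finite set displayed in~(\ref{coord}), which depends only on $r_{(a,b)}\cap\Gamma(f)$. Consequently the set of possible locations in $\Delta^*$ for points of $\dgm(f^*_{(a,b)})\setminus\{\Delta\}$ (both proper points, whose two coordinates lie in this finite set, and improper points $(w,\infty)$, whose first coordinate lies in the finite set) is itself finite.

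Finally, each such location $(u,v)$ has finite multiplicity $\mu_{f^*_{(a,b)}}(u,v)$: this is immediate from Definition~\ref{Multiplicity} together with the already-quoted fact that $\beta_{f^*_{(a,b)}}$ takes only finite values on $\Delta^+$ (since $M$ is finitely triangulable, as recalled in Section~\ref{PBN}). A multiset supported on a finite set of positions, each with finite multiplicity, is finite, which yields the claim. The only subtle step is the monotonicity argument for proper contours; everything else is bookkeeping built on the Position Theorem and on Definition~\ref{Multiplicity}.
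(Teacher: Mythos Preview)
Your argument is correct. The monotonicity step is sound: property~$(iv)$ guarantees that along each critical interval one of $f_1,f_2$ strictly increases while the other strictly decreases, so the image (and its closure) is the graph of a strictly decreasing function of the first coordinate; since $r_{(a,b)}$ has positive slope, the two can meet in at most one point. Together with the finiteness of critical intervals and of critical points of $f_1,f_2$, this gives $\#\bigl(r_{(a,b)}\cap\Gamma(f)\bigr)<\infty$ for every admissible line, not just generic ones. The Position Theorem then bounds the support of $\dgm(f^*_{(a,b)})\setminus\{\Delta\}$, and finiteness of $\beta_{f^*_{(a,b)}}$ bounds each multiplicity via Definition~\ref{Multiplicity}.

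Your route differs from the paper's one-line sketch, which invokes the Position Theorem together with the normal-function hypotheses in Definition~\ref{defnassumptions} and the Stability Theorem~\ref{stability}. The paper's approach presumably uses finiteness of $\mathcal{S}(f)$ to control intersections for generic lines and then passes to arbitrary lines by stability. Your argument is more elementary: it needs only properties~$(i)$--$(iv)$ (not the additional normality conditions~(1)--(4)) and does not appeal to the Stability Theorem at all, so it actually proves a slightly stronger statement. The trade-off is that the paper's sketch fits the running framework where stability is already a workhorse, while your version isolates the purely combinatorial reason the intersection is finite.
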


\begin{proof}
The statement immediately follows from the Position Theorem, the properties in Definition~\ref{defnassumptions} and
the Stability Theorem~\ref{stability}.
\end{proof}

\begin{corollary}\label{corsp}
The following statements hold:
\begin{enumerate}
\item The set of all pairs $(a,b)\in{\mathcal {P}}(\Lambda^+)$ such that $\dgm\left(f_{(a,b)}^*\right)$ contains a proper multiple point is finite;
\item The set of all pairs $(a,b)\in{\mathcal {P}}(\Lambda^+)$ such that $\dgm\left(f_{(a,b)}^*\right)$ contains an improper multiple point is a finite union of open segments joining the line $a=0$ to the line $a=1$.
\end{enumerate}
\end{corollary}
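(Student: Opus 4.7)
The plan is to deduce both parts from Proposition~\ref{propldp} combined with the finiteness of $\mathcal{D}(f)$, which is immediate from $\mathcal{D}(f)\subseteq \mathcal{S}(f)$ and Definition~\ref{defnassumptions}(\ref{defnassumptionsnummult}). In each case, one converts the existence of a multiple point in $\dgm\left(f_{(a,b)}^*\right)$ into an incidence condition on $r_{(a,b)}$ against the finite set $\mathcal{D}(f)$, and then enumerates or parameterizes the admissible lines meeting that condition.

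For part~(1), if $(a,b)\in{\mathcal {P}}(\Lambda^+)$ produces a proper multiple point then Proposition~\ref{propldp} provides two distinct points of $\mathcal{D}(f)$ lying on $r_{(a,b)}$. Because two distinct points of the plane determine at most one common line, and because distinct admissible lines are parameterized by distinct elements of ${\mathcal {P}}(\Lambda^+)$, the set of such pairs $(a,b)$ injects into the set of unordered pairs of distinct elements of $\mathcal{D}(f)$. The latter is finite, which gives the conclusion.

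For part~(2), if $(a,b)$ produces an improper multiple point then Proposition~\ref{propldp} supplies a point $P=(x_0,y_0)\in\mathcal{D}(f)$ with $P\in r_{(a,b)}$. Substituting $P$ into the parameterization $(u,v)=t(a,1-a)+(b,-b)$ of $r_{(a,b)}$ yields $t=x_0+y_0$ and $b=x_0-(x_0+y_0)a$. Hence, for each fixed $P$, the admissible pairs whose line passes through $P$ trace out the open segment $\bigl\{(a,\,x_0-(x_0+y_0)a):a\in\,]0,1[\,\bigr\}\subset{\mathcal{P}}(\Lambda^+)$, whose closure joins the point $(0,x_0)$ on the line $a=0$ to the point $(1,-y_0)$ on the line $a=1$. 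As $P$ ranges over the finite set $\mathcal{D}(f)$, we obtain the desired finite union of open segments, with the set of interest contained in this union.

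The main obstacle will be the reverse inclusion in part~(2), namely showing that essentially every pair on one of these segments actually produces an improper multiple point, rather than only a proper one or none at all. I expect to handle this by analysing the two contour-arcs through $P$ via their signs $s(\gamma)\in\{\pm 1\}$ from Definition~\ref{defnassumptions}(\ref{defnassumptionscontourarcs}) and the pairing of contour-arcs described in Subsection~\ref{pairing}; the assumptions that each double point is genuinely double and that no line meets $\mathcal{D}(f)$ in more than two points, i.e.\ Definition~\ref{defnassumptions}(\ref{defnassumptionsmultdoub})--(\ref{defnassumptionsnoline}), keep this local picture rigid as $(a,b)$ slides along the segment, up to the finitely many exceptional pairs already accounted for in part~(1).
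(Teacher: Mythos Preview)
Your argument for part~(1) and for the inclusion ``improper multiple point $\Rightarrow$ $(a,b)$ lies on $S_D$ for some $D\in\mathcal{D}(f)$'' in part~(2) is correct and matches the paper; the explicit parameterization $S_D=\{(a,x_0-(x_0+y_0)a):a\in\,]0,1[\,\}$ is a useful addition.

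The gap is in the other direction in part~(2). You hedge with ``essentially every pair'' and plan to allow finitely many exceptions, but the statement requires that each segment in the union join $a=0$ to $a=1$. If even one interior point of some $S_D$ failed to produce an improper multiple point, the set restricted to that segment would split into strictly shorter open subintervals not spanning $]0,1[$, and the conclusion as stated would fail. The paper closes this differently from what you propose: having found a double point $D$ such that \emph{one} line through $D$ yields an improper multiple, it invokes the definition of multiplicity of points in $\Gamma(f)$ together with Remark~\ref{remcontour} to conclude that \emph{every} $(a,b)\in S_D$ does so, with no exceptions. The mechanism is that Property~(\ref{defnassumptionscontourarcs}) makes the homological effect of crossing each of the two contour-arcs through $D$ independent of the direction $(a,1-a)$, so the local picture producing the double improper point persists for all slopes. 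Your planned route through the pairing of Subsection~\ref{pairing} is less apt here, since that pairing concerns proper birth--death pairs rather than improper points; what is actually needed is the direction-independence encoded in Remark~\ref{remcontour}, and it delivers the full segment rather than the segment minus a finite set.
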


\begin{proof}
Statement $(1)$ follows from Proposition~\ref{propldp}, Property (\ref{defnassumptionsnummult}) in Definition~\ref{defnassumptions} and the inclusion $\mathcal{D}(f)\subseteq\mathcal{S}(f)$. As for Statement $(2)$, let us assume that the set $\dgm\left(f_{\left(\bar a,\bar b\right)}^*\right)$ contains an improper multiple point. Because of Proposition~\ref{propldp}, $r_{\left(\bar a,\bar b\right)}$ contains a double point $D\in\Gamma(f)$. It is easy to check that the sheaf of positive slope lines passing through $D$ corresponds to a segment $S_D$ joining the line $a=0$ to the line $a=1$ in ${\mathcal {P}}(\Lambda^+)$.
%
The definition of multiplicity of points in $\Gamma(f)$ and Remark~\ref{remcontour} imply that $\dgm\left(f_{(a,b)}^*\right)$ contains an improper multiple point for every $(a,b)\in S_D$.
\end{proof}

\begin{corollary}\label{cordpp}
For every $(a,b)\in {\mathcal {P}}(\Lambda^+)$, at most one proper point of $\dgm\left(f_{(a,b)}^*\right)$ has multiplicity greater than $1$. If such a point exists, its multiplicity is $2$.
\end{corollary}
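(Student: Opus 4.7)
The plan is to combine the constraints from Definition~\ref{defnassumptions}, Proposition~\ref{propldp}, the Position Theorem, and a small perturbation argument based on the Stability Theorem. Throughout, let $\left(\bar a,\bar b\right)\in {\mathcal {P}}(\Lambda^+)$ and suppose that $\dgm\left(f_{\left(\bar a,\bar b\right)}^*\right)$ contains a proper point $P=(\bar u,\bar v)$ of multiplicity $m\ge 2$. For $(a,b)$ arbitrarily close to $\left(\bar a,\bar b\right)$ and regular (avoiding the finite exceptional set of Corollary~\ref{corsp}(1)), Stability produces $m$ proper points of $\dgm\left(f_{(a,b)}^*\right)$ converging to $P$.

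First I would show that $m=2$. By the Position Theorem, the $m$ birth coordinates of the perturbed diagram points correspond to $m$ distinct intersections of $r_{(a,b)}$ with $\Gamma(f)$; as $(a,b)\to\left(\bar a,\bar b\right)$, these intersection points all converge to the unique point $\bar D^b\in r_{\left(\bar a,\bar b\right)}$ whose projection equals $\bar u$ (uniqueness is because a positive slope line has a single point with given abscissa). Hence $\bar D^b$ has multiplicity at least $m$ in $\Gamma(f)$, so $\bar D^b\in \mathcal{D}(f)$ and by Property~(\ref{defnassumptionsmultdoub}) of Definition~\ref{defnassumptions} its multiplicity is exactly $2$, forcing $m=2$. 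The same argument applied to deaths gives a double point $\bar D^d\in r_{\left(\bar a,\bar b\right)}\cap \mathcal{D}(f)$, distinct from $\bar D^b$ because $\bar u<\bar v$.

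Next I would prove uniqueness. Suppose $P_1\ne P_2$ are two proper multiple points of $\dgm\left(f_{\left(\bar a,\bar b\right)}^*\right)$; by the previous step each has multiplicity $2$ and yields double points $\bar D_i^b,\bar D_i^d\in r_{\left(\bar a,\bar b\right)}\cap\mathcal{D}(f)$ for $i=1,2$. The key observation is that in a small neighborhood of a double point $\bar D\in\mathcal{D}(f)$ the line $r_{\left(\bar a,\bar b\right)}$ meets exactly two contour-arcs of $f$, so by Remark~\ref{remcontour} there are at most two homological events (births or deaths) that can accumulate near $\bar D$ for $(a,b)$ close to $\left(\bar a,\bar b\right)$. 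If $\bar D_1^b=\bar D_2^b$, this single double point would have to account for the four births needed by $P_1$ and $P_2$; impossible. So $\bar D_1^b\ne \bar D_2^b$, and analogously $\bar D_1^d\ne \bar D_2^d$. Finally, if $\bar D_i^b=\bar D_j^d$ for some $i\ne j$, the two contour-arcs crossing near that double point would have to produce both two births (for $P_j$) and two deaths (for $P_i$); but each contour-arc has a fixed sign $s(\gamma)\in\{-1,+1\}$ by Property~(\ref{defnassumptionscontourarcs}), so two intersections cannot simultaneously serve as two births and two deaths. Hence the four points $\bar D_1^b,\bar D_1^d,\bar D_2^b,\bar D_2^d$ are pairwise distinct, giving $|r_{\left(\bar a,\bar b\right)}\cap\mathcal{D}(f)|\ge 4$ and contradicting Property~(\ref{defnassumptionsnoline}).

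The main obstacle I anticipate is making the ``event bookkeeping'' in the last paragraph fully rigorous: matching the perturbed-diagram points to specific contour-arcs via the Position Theorem and using the sign $s(\gamma)$ from Property~(\ref{defnassumptionscontourarcs}) to rule out a single double point playing the role of a simultaneous birth source and death source. Once this correspondence is formalized (appealing to Remark~\ref{remcontour} to control multiplicities along each contour-arc), the two conclusions—$m=2$ and uniqueness—follow cleanly from the pigeonhole counting against Property~(\ref{defnassumptionsnoline}).
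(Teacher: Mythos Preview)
Your proof is correct and follows exactly the route the paper takes: perturb the line so that it avoids $\mathcal{D}(f)$, invoke the Position Theorem together with the properties in Definition~\ref{defnassumptions}, and close with the Stability Theorem. The paper's own proof is a two--sentence sketch of this same argument; you have simply filled in the bookkeeping that the paper leaves implicit.

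Two small remarks. First, your uniqueness case analysis is more work than necessary: once you know each coordinate of a proper multiple point yields a point of $\mathcal{D}(f)$ on $r_{(\bar a,\bar b)}$, the observation that $P_1\neq P_2$ forces $|\{u_1,v_1,u_2,v_2\}|\ge 3$ already produces at least three points of $\mathcal{D}(f)$ on the line and contradicts Property~(\ref{defnassumptionsnoline}); the sign $s(\gamma)$ is not needed. Second, in the mixed case $\bar D_i^b=\bar D_j^d$ your indices are swapped (that point is a birth source for $P_i$ and a death source for $P_j$), and the contradiction there is really just the count ``two intersections give two events, not four'' rather than anything about signs.
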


\begin{proof}
The Position Theorem~\ref{GT*} and the properties in Definition~\ref{defnassumptions} guarantee
that if $r_{(a',b')}$ is an admissible line that is close to $r_{(a,b)}$ and does not touch $\mathcal{D}(f)$, then it is not possible to find more than one pair $\left((u_1,v_1),(u_2,v_2)\right)$ of proper points in  $\dgm\left(f^*_{(a',b')}\right)$ such that $(u_1,v_1)$ is arbitrarily close to $(u_2,v_2)$.
Our statements follow from the Stability Theorem~\ref{stability}.
\end{proof}

\begin{corollary}\label{cordip}
For every $(a,b)\in {\mathcal {P}}(\Lambda^+)$, at most two improper points of $\dgm\left(f_{(a,b)}^*\right)$ have multiplicity greater than $1$. If such points exist, their multiplicities are $2$.
\end{corollary}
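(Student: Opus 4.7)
The plan is to follow closely the strategy used for Corollary~\ref{cordpp}, adapting it to handle the finite coordinate of an improper point. First I would assume that $\dgm\left(f_{(a,b)}^*\right)$ contains an improper point $(u,\infty)$ of multiplicity strictly greater than $1$. By the Stability Theorem~\ref{stability}, any admissible line $r_{(a',b')}$ sufficiently close to $r_{(a,b)}$ and avoiding $\mathcal{S}(f)$ yields a diagram $\dgm\left(f_{(a',b')}^*\right)$ containing at least two distinct improper points whose finite coordinates are arbitrarily close to $u$. By the Position Theorem~\ref{GT*}, these finite coordinates are determined by distinct intersections of $r_{(a',b')}$ with $\Gamma(f)$, which must then collapse to a single point of $r_{(a,b)}\cap\Gamma(f)$ as $(a',b')\to (a,b)$. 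This common limit point is, by the very definition of the multiplicity of points in $\Gamma(f)$, an element of $\mathcal{D}(f)\cap r_{(a,b)}$.

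Next I would invoke Property~(\ref{defnassumptionsmultdoub}) of Definition~\ref{defnassumptions}, according to which every point of $\mathcal{D}(f)$ is double. This means that no more than two nearby intersections of $r_{(a',b')}$ with $\Gamma(f)$ can accumulate at a single point of $\mathcal{D}(f)$; consequently, the multiplicity of the improper point $(u,\infty)$ is exactly $2$. Finally, Property~(\ref{defnassumptionsnoline}) of Definition~\ref{defnassumptions} guarantees that $r_{(a,b)}$ contains at most two points of $\mathcal{D}(f)$. Since distinct improper multiple points of $\dgm\left(f_{(a,b)}^*\right)$ have distinct finite coordinates, and each such coordinate is determined (via the linear parameterization of $r_{(a,b)}$) by a distinct element of $\mathcal{D}(f)\cap r_{(a,b)}$, the number of improper multiple points in $\dgm\left(f_{(a,b)}^*\right)$ is at most two.

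The step that I expect to require the most care is the matching between the two nearby improper points of $\dgm\left(f_{(a',b')}^*\right)$ and the two nearby intersections of $r_{(a',b')}$ with $\Gamma(f)$ near the doubling point of $\mathcal{D}(f)$. This relies on combining the Stability Theorem~\ref{stability} with the local structure around points of $\mathcal{D}(f)$ ensured by the normality assumptions, most notably Properties~(\ref{defnassumptionsnummult}) and~(\ref{defnassumptionsmultdoub}) of Definition~\ref{defnassumptions}, which together exclude the possibility of accidental accumulations of contour-arcs near a double point. Once this local picture is established, the counting argument outlined above yields both the upper bound of two on the number of improper multiple points and the fact that each such point has multiplicity exactly $2$.
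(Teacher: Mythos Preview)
Your proposal is correct and follows essentially the same approach as the paper's own proof: both combine the Position Theorem~\ref{GT*}, the normality assumptions in Definition~\ref{defnassumptions} (specifically Properties~(\ref{defnassumptionsmultdoub}) and~(\ref{defnassumptionsnoline})), and the Stability Theorem~\ref{stability} to bound the number and multiplicity of improper multiple points via nearby lines avoiding $\mathcal{D}(f)$. Your version is simply more explicit about the intermediate steps that the paper compresses into a single sentence.
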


\begin{proof}
The Position Theorem~\ref{GT*} and the properties in Definition~\ref{defnassumptions} guarantee
that if $r_{(a',b')}$ is an admissible line that is close to $r_{(a,b)}$ and does not touch $\mathcal{D}(f)$, then
it is not possible to find more than two pairs $\left((u_1,\infty),(u_2,\infty)\right),\left((u'_1,\infty),(u'_2,\infty)\right)$ of improper points in  $\dgm\left(f^*_{(a',b')}\right)$ such that $u_1$ is arbitrarily close to $u_2$ and
$u'_1$ is arbitrarily close to $u'_2$.
Our statements follow from the Stability Theorem~\ref{stability}.
\end{proof}

The results proved in this subsection are illustrated by the following example.

\begin{example}\label{excor}
Let us consider the union $M'$ of two disjoint spherical surfaces in $\R^3$, having the extended Pareto grid represented in Figure~\ref{figcor} with respect to the filtering function $f$ that takes each point $p\in M'$ to the pair $f(p)=(x(p),z(p))$.
Let us consider the admissible line $r_{\left(\bar a,\bar b\right)}$ meeting the double points $A$ and $B$. It is easy to check that $\left(\bar a,\bar b\right)$ is a singular pair of ${\mathcal {P}}(\Lambda^+)$ in degree $1$.
Moreover, if the line $r_{(a,b)}$ contains the double point $C$, then $(a,b)$ is a singular pair of ${\mathcal {P}}(\Lambda^+)$ in degree $0$. Furthermore, if the line $r_{(a,b)}$ contains the double point $D$, then $(a,b)$ is a singular pair of ${\mathcal {P}}(\Lambda^+)$ in degree $2$.
\end{example}

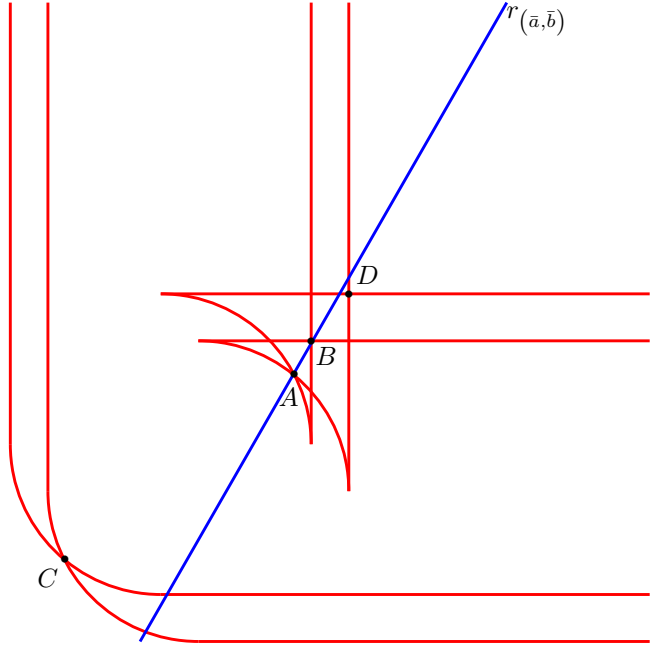
\begin{figure}
\begin{center}
\begin{tikzpicture}[scale=0.5]

\draw [red,line width=1.1] (3,1.25) arc (0:90:4);
\draw [red,line width=1.1] (-5,1.25) arc (180:270:4);

\draw [red,line width=1.1] (-1,-2.75) -- (12,-2.75);
\draw [red,line width=1.1] (-5,1.25) -- (-5,13);

\draw [red,line width=1.1] (-1,5.25) -- (12,5.25);
\draw [red,line width=1.1] (3,1.25) -- (3,13);

\draw [red,line width=1.1] (4,0) arc (0:90:4);
\draw [red,line width=1.1] (-4,0) arc (180:270:4);

\draw [red,line width=1.1] (0,-4) -- (12,-4);
\draw [red,line width=1.1] (-4,0) -- (-4,13);

\draw [red,line width=1.1] (0,4) -- (12,4);
\draw [red,line width=1.1] (4,0) -- (4,13);

\draw [blue,line width=1.1] (-1.55,-4) -- (8.2,13);

\draw [black] (2.55,3.1) node {{\tiny $\bullet$}};
\draw [black] (3,4) node {{\tiny $\bullet$}};
\draw [black] (-3.55,-1.8) node {{\tiny $\bullet$}};
\draw [black] (4,5.25) node {{\tiny $\bullet$}};

\draw (9,12.5) node {$r_{\left(\bar a,\bar b\right)}$};

\draw (2.4,2.5) node {$A$};

\draw (3.4,3.6) node {$B$};

\draw (-4,-2.3) node {$C$};

\draw (4.5,5.75) node {$D$};

\end{tikzpicture}
\end{center}\caption{
The extended Pareto grid of the manifold $M'$ described in Example~\ref{excor}, with respect to the filtering function $f$ that takes each point $p\in M'$ to the pair $f(p)=(x(p),z(p))$. The blue line corresponds to a singular pair of ${\mathcal {P}}(\Lambda^+)$ in degree $1$.
}\label{figcor}
\end{figure}

\subsection{Creation and destruction of points in $\dgm\left(f_{(a,b)}^*\right)$ varying $(a,b)$}
\label{C&D}

From the Position Theorem~\ref{GT*} the next Proposition~\ref{propDelta} follows, allowing us to deduce where in $\Delta$ a point  of $\dgm\left(f_{(a,b)}^*\right)$ can be created or destroyed. We start by giving two definitions.

\begin{definition}
Let $\gamma_1,\gamma_2$ be two contour-arcs paired to each other (with respect to any line $r_{(a,b)}$ that meets both of them). If $\gamma_1$ and $\gamma_2$ have a common endpoint $(\bar x,\bar y)$, it is known as an \emph{annihilation crossing} for $f$, associated with the contour-arcs $\gamma_1,\gamma_2$.
\end{definition}

By definition, each annihilation crossing for $f$ belongs to the set $\mathcal{D}(f)$.
The set of all annihilation crossings for $f$ will be denoted by the symbol $\mathcal{A}(f)$.

\begin{definition}
Let $(a,b)\in{\mathcal {P}}(\Lambda^+)$ and $(\bar u,\bar u)\in\Delta$.
If for every $\delta>0$ a pair $(a',b')\in{\mathcal {P}}(\Lambda^+)$ and a point $(u',v')\in \dgm\left(f^*_{(a',b')}\right)\setminus \{\Delta\}$ exist with
$|a-a'|,|b-b'|< \delta$ and $|\bar u-u'|,|\bar u-v'|<\delta$, then $(\bar u,\bar u)$ will be called an \emph{annihilation point} at $(a,b)$ for $f$.
\end{definition}

In plain words, the annihilation points at $(a,b)$ are the locations on the diagonal $\Delta$ at which the points of the persistence diagram $\dgm\left(f_{(a,b)}^*\right)$ can appear and disappear.

\begin{proposition}\label{propDelta}
A point $(\bar x,\bar y)\in\R^2$ is an annihilation crossing for $f$ if and only if for every
$r_{(a,b)}$ containing $(\bar x,\bar y)$, the point $(\bar u,\bar u)$ with $\bar u=\frac{\min\{a,1-a\}}{a}\cdot (\bar x-b)=\frac{\min\{a,1-a\}}{1-a}\cdot (\bar y+b)$ is an annihilation point at $(a,b)$ for $f$.
\end{proposition}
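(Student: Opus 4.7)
My plan is to prove both implications using the Position Theorem~\ref{GT*}, the pairing correspondence of Section~\ref{pairing}, and the Stability Theorem~\ref{stability}, together with the local geometry of contour-arcs at endpoints lying in $\mathcal{D}(f)$.

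For the ``only if'' direction, I would assume $(\bar x,\bar y)$ is an annihilation crossing associated with paired contour-arcs $\gamma_1,\gamma_2$ having this common endpoint, and fix an admissible line $r_{(a,b)}$ through $(\bar x,\bar y)$. Each $\gamma_i$, being the image under $f$ of a critical interval away from cusps, is a smooth arc accumulating at $(\bar x,\bar y)$ from a definite direction; property~$(iv)$ of normal functions, combined with the fact that $\gamma_1$ and $\gamma_2$ are paired, will allow me to argue that the two arcs lie on a common side of $r_{(a,b)}$ locally at $(\bar x,\bar y)$, so that arbitrarily small perturbations of $(a,b)$ inside a suitable open cone of parameters produce lines $r_{(a',b')}$ that avoid $\mathcal{S}(f)$ and meet both $\gamma_1$ and $\gamma_2$ at points close to $(\bar x,\bar y)$. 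The pairing property then supplies a proper point $(u',v')\in\dgm(f^*_{(a',b')})$ whose coordinates, via the Position Theorem formula, are determined by these two intersection points; as the intersections converge to $(\bar x,\bar y)$, both $u'$ and $v'$ converge to $\bar u$, verifying that $(\bar u,\bar u)$ is an annihilation point at $(a,b)$.

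For the ``if'' direction, I would fix a line $r_{(a,b)}$ through $(\bar x,\bar y)$ and use the hypothesis to select sequences $(a_n,b_n)\to(a,b)$, which I arrange to lie in $\mathrm{Reg}(f)$ by Corollary~\ref{corsp}, and proper points $(u_n,v_n)\in\dgm(f^*_{(a_n,b_n)})$ converging to $(\bar u,\bar u)$. Applying the Position Theorem to each $(u_n,v_n)$ produces two sequences of points of $r_{(a_n,b_n)}\cap\Gamma(f)$; the convergence of both $u_n$ and $v_n$ to the common value $\bar u$ forces both sequences to converge to $(\bar x,\bar y)$. Property~(\ref{defnassumptionsnummult}) of Definition~\ref{defnassumptions} ensures finiteness of the contour-arcs, so after passing to a subsequence the intersection points lie on two fixed contour-arcs $\gamma_1,\gamma_2$, each with $(\bar x,\bar y)$ as an endpoint (since contour-arcs do not contain their endpoints and those endpoints belong to $\mathcal{D}(f)$). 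The two arcs must be distinct by property~$(iv)$, which prevents a single smooth arc approaching its endpoint from being cut in two nearby points by lines close to $r_{(a,b)}$. Finally, $\gamma_1$ and $\gamma_2$ are paired because each $(u_n,v_n)$ is a single element of the persistence diagram, and the pairing is transferred to every common admissible line by the observation of Section~\ref{pairing}, so $(\bar x,\bar y)$ is indeed an annihilation crossing.

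The main obstacle, shared by both directions, is the local geometric analysis at the double point $(\bar x,\bar y)\in\mathcal{D}(f)$: in the forward direction identifying the correct open cone of parameter perturbations, and in the reverse direction ruling out the possibility that the two intersection sequences land on a single contour-arc. Both facts are local and are controlled by the normality hypotheses, in particular by property~$(iv)$ together with the smoothness of the contour-arcs, but they require a careful description of how contour-arcs emanate from points of $\mathcal{D}(f)$ rather than being deducible by a direct appeal to transversality at an interior point.
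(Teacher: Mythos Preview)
Your plan is correct and supplies precisely the geometric details that the paper's proof omits: the paper writes only ``This is a direct consequence of the Position Theorem~\ref{GT*} and the Stability Theorem~\ref{stability}'', so your proposal is not a genuinely different route but a fleshed-out version of that sentence.

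The point you flag as the main obstacle --- that the two paired contour-arcs emanate from $(\bar x,\bar y)$ on a common side of any admissible line through it --- is indeed the heart of the matter, and your attribution to property~$(iv)$ together with the pairing hypothesis is correct. To make this explicit: property~$(iv)$ (and the form of the improper contours) forces every contour-arc with endpoint $(\bar x,\bar y)$ to lie entirely in one of the two closed regions $\{x\le\bar x,\ y\ge\bar y\}$ or $\{x\ge\bar x,\ y\le\bar y\}$. A line of positive slope meets an arc contained in the first region only if it passes above-left of $(\bar x,\bar y)$, and an arc in the second region only if it passes below-right; hence two arcs lying in opposite regions are never simultaneously crossed by any admissible line and therefore cannot be paired. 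This single observation handles both obstacles you isolate: in the forward direction it shows paired arcs share a region, so small perturbations of $r_{(a,b)}$ into the corresponding half-plane meet both arcs near $(\bar x,\bar y)$; in the reverse direction it shows a positive-slope line meets any one contour-arc at most once, so the two intersection sequences must lie on distinct arcs.

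One minor refinement in the ``if'' direction: besides arranging $(a_n,b_n)\in\mathrm{Reg}(f)$ via Corollary~\ref{corsp}, you should also perturb so that $r_{(a_n,b_n)}$ avoids the finite set $\mathcal{S}(f)$; this makes the multiplicity argument (two nearby intersection points on $\Gamma(f)$ force $(\bar x,\bar y)\in\mathcal{D}(f)$) go through cleanly and guarantees $(\bar x,\bar y)$ is genuinely an endpoint of both arcs.
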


\begin{proof}
This is a direct consequence of the Position Theorem~\ref{GT*} and the Stability Theorem~\ref{stability}.
\end{proof}

This corollary immediately follows.

\begin{corollary}\label{corDelta}
Let $(a(t),b(t))$ be a continuous curve in ${\mathcal {P}}(\Lambda^+)$ such that the distance between
$\dgm\left(f^*_{(a(t),b(t))}\right)\setminus \{\Delta\}$ and $(\bar u,\bar u)\in\Delta$ tends to $0$ for $t\to \bar t$.
Then an annihilation crossing $(\bar x,\bar y)\in \mathcal{A}(f)$ exists with $\bar u=\frac{\min\{a(\bar t),1-a(\bar t)\}}{a(\bar t)}\cdot (\bar x-b(\bar t))=\frac{\min\{a(\bar t),1-a(\bar t)\}}{1-a(\bar t)}\cdot (\bar y+b(\bar t))$.
\end{corollary}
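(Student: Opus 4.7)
The plan is to apply the Position Theorem~\ref{GT*} along the curve $(a(t),b(t))$ and trace the birth/death coordinates of a persistence point close to $(\bar u, \bar u)$ back to two intersections of $r_{(a(t),b(t))}$ with $\Gamma(f)$; these intersections must collapse to a common point, which will serve as the desired annihilation crossing.

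Concretely, I would first extract a sequence $t_n \to \bar t$ and points $P_n = (u_n, v_n) \in \dgm\left(f^*_{(a(t_n),b(t_n))}\right) \setminus \{\Delta\}$ with $P_n \to (\bar u, \bar u)$; since $\bar u$ is finite, both coordinates are finite for large $n$. The Position Theorem~\ref{GT*} applied to $u_n$ and $v_n$ separately yields intersection points $(x_1(n), y_1(n))$ and $(x_2(n), y_2(n))$ in $r_{(a(t_n),b(t_n))} \cap \Gamma(f)$ with the usual linear rescaling identities relating them to $u_n$ and $v_n$. Because properties $(i)$--$(iv)$ of normal functions together with Definition~\ref{defnassumptions} ensure that $\Gamma(f)$ has only finitely many contour-arcs, I pass to a subsequence so that $(x_1(n), y_1(n))$ lies on a fixed contour-arc $\gamma_1$ and $(x_2(n), y_2(n))$ on a fixed contour-arc $\gamma_2$. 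Continuity of the line $r_{(a,b)}$ in $(a,b)$ and of the rescaling identities forces both sequences to converge to a common point $(\bar x, \bar y)$ lying on $r_{(a(\bar t), b(\bar t))}$.

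Next I would establish that $\gamma_1 \neq \gamma_2$, which in turn forces $(\bar x, \bar y) \in \mathcal{D}(f)$. For improper contour-arcs (vertical or horizontal half-lines) any positive-slope line meets the arc in exactly one point. For a proper contour-arc, property $(iv)$ says that one of $f_1, f_2$ is strictly increasing and the other strictly decreasing along the underlying critical interval, so its image in $\R^2$ is an embedded, strictly monotonically decreasing curve, which a positive-slope line meets at most once. Since $(x_1(n), y_1(n)) \neq (x_2(n), y_2(n))$, the two points must lie on distinct contour-arcs, so $\gamma_1 \neq \gamma_2$. Because the contour-arcs are disjoint connected components of $\Gamma(f) \setminus \mathcal{D}(f)$ and $\Gamma(f)$ is closed in $\R^2$, any common limit point in $\Gamma(f)$ of two distinct contour-arcs must lie in $\mathcal{D}(f)$; consequently $(\bar x, \bar y) \in \mathcal{D}(f)$ and is a common endpoint of $\gamma_1$ and $\gamma_2$.

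Finally, the identities $u_n = \tfrac{\min\{a(t_n),1-a(t_n)\}}{a(t_n)}(x_1(n)-b(t_n))$ and $v_n = \tfrac{\min\{a(t_n),1-a(t_n)\}}{a(t_n)}(x_2(n)-b(t_n))$, together with the parallel identities for $y_i(n)$, directly witness that $\gamma_1$ and $\gamma_2$ are paired to each other with respect to $r_{(a(t_n), b(t_n))}$ in the sense of Section~\ref{pairing}; being paired and sharing the common endpoint $(\bar x, \bar y)$, they show $(\bar x, \bar y) \in \mathcal{A}(f)$. The formula for $\bar u$ is then obtained by passing to the limit in the same identities. The step I expect to be most delicate is the transversality/uniqueness claim that a positive-slope line meets each proper contour-arc in at most one point: this is where property $(iv)$ must be carefully converted into a clean statement about the strict monotonic decrease of $f(\mathbb{J}_P(f))$ in the $(f_1,f_2)$-plane, in order to rule out a degenerate limit in which two distinct intersections on the same arc collide. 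Once this is in place, the rest is a routine unpacking of the definitions of pairing, of annihilation crossing, and of $\mathcal{A}(f)$.
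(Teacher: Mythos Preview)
Your argument is correct. The paper's own proof is a single line—``This corollary immediately follows'' from Proposition~\ref{propDelta}, whose proof is in turn just the sentence ``This is a direct consequence of the Position Theorem~\ref{GT*} and the Stability Theorem~\ref{stability}.'' What you have written is essentially the detailed unpacking of those two sentences: you extract a sequence, apply the Position Theorem to both coordinates, pass to a subsequence landing on fixed contour-arcs, collapse the two intersection points to a common limit in $\mathcal{D}(f)$, and then read off the pairing to conclude that the limit is an annihilation crossing. This is exactly the mechanism the paper has in mind but leaves implicit.

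The one place where your route is organisationally different is that you bypass the intermediate notion of \emph{annihilation point} and Proposition~\ref{propDelta} entirely, going straight from the Position Theorem to the conclusion. This is slightly more economical, since the biconditional in Proposition~\ref{propDelta} is stronger than what the corollary actually needs (you only use one line through $(\bar x,\bar y)$, not all of them). Your flagged delicate step—the claim that a positive-slope line meets each contour-arc at most once—is indeed the crux, and your justification via property~$(iv)$ (strict monotonicity of $f_1,f_2$ in opposite directions along each critical interval, hence a strictly decreasing graph in the $(f_1,f_2)$-plane) is the right one; the improper pieces are handled trivially. One small point worth noting: the intersection points $(x_i(n),y_i(n))$ could a priori land in the finite set $\mathcal{D}(f)$ rather than on a contour-arc, but since $\mathcal{D}(f)$ is finite and the points are determined by the continuous rescaling formulas, either a subsequence avoids $\mathcal{D}(f)$ or a subsequence is eventually constant at a point of $\mathcal{D}(f)$, and in the latter case the conclusion is immediate.
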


In plain words, the previous result shows that points of $\dgm\left(f^*_{(a(t),b(t))}\right)$ can be created or destroyed only when the line $r_{(a(t),b(t))}$ meets an annihilation crossing $(\bar x,\bar y)\in \mathcal{A}(f)$ (see Figure~\ref{figdestruction}). Then the creation or destruction happens at the annihilation point $(\bar u,\bar u)$ at
$\left(a,b\right)$ with $\bar u=\frac{\min\{a(\bar t),1-a(\bar t)\}}{a(\bar t)}\cdot (\bar x-b(\bar t))$.

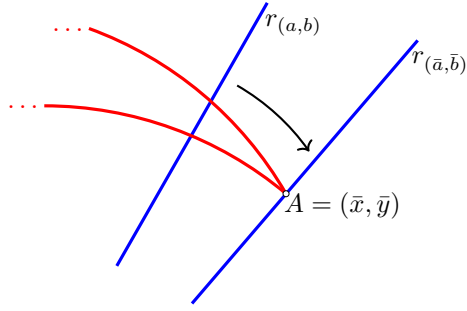
\begin{figure}
\begin{center}
\begin{tikzpicture}[scale=1.0]
\draw [blue,line width=1.2] (2,0.5) -- (4,4);
\draw [blue,line width=1.2] (3,0) -- (6,3.5);
\draw (4.35,3.7) node {$r_{(a,b)}$};
\draw (6.3,3.2) node {$r_{(\bar{a},\bar{b})}$};

\draw [red,line width=1.2] (4.25,1.46) arc (30:70:5);
\draw [red,line width=1.2] (4.25,1.46) arc (50:90:5);

\draw [line width=0.1,black,fill=white] (4.25,1.46) circle (0.04);
\draw (5,1.3) node {$A=(\bar x,\bar y)$};

\draw [red] (0.75,2.62) node {$\ldots$};
\draw [red] (1.35,3.65) node {$\ldots$};

\draw [->,line width=0.8] (3.6,2.9) arc (60:35:3);
\end{tikzpicture}
\end{center}\caption{When $(a,b)$ moves towards $(\bar{a},\bar{b})$ and, correspondingly, the line $r_{(a,b)}$ moves and meets an annihilation crossing $A\in \mathcal{A}(f)$, a point of $\dgm\left(f_{(a,b)}^*\right)$ reaches the diagonal $\Delta$ at an annihilation point $(\bar u,\bar u)$ and disappears. By reversing the movement of $r_{(a,b)}$ we get the birth of a point of $\dgm\left(f_{(a,b)}^*\right)$. Parts of two contour-arcs of $f$ are displayed in red.}
\label{figdestruction}
\end{figure}

\subsection{Choice of the functional set $\mathcal{F}_{U,c}$} Now, in order to proceed we fix a connected open subset $U$ of ${\mathcal {P}}(\Lambda^+)$, choose a $c>0$ and define $\mathcal{F}_{U,c}$ as the set of all normal functions $f:M\to\R^2$ such that $\mathrm{Reg}(f)\supseteq U$ and the distance $d(P,Q)$ between any two distinct points $P,Q$ of $\dgm\left(f_{(a,b)}^*\right)$ is strictly greater than $2c>0$ for every $(a,b)\in U$. Incidentally, we observe that for any $(a,b)\in U$ and any $f\in \mathcal{F}_{U,c}$, the sup-norm distance between the points in $\dgm\left(f_{(a,b)}^*\right)\setminus \{\Delta\}$ and the diagonal $\Delta$ is strictly larger than $2c$.
It follows from the previous Subsection~\ref{C&D} that if $(a,b)\in U$ then $r_{(a,b)}$ cannot contain annihilation crossings for $f$.
We also observe that the distance between the sets $U$ and $\mathrm{Sing}(f)$ is positive, because of the Stability Theorem~\ref{stability}.
The definition of our coherent matching distance will depend on the choice of this set $U$.

In the following we will assume that the functions $f,g$ belong to the set $\mathcal{F}_{U,c}$.
This assumption guarantees that the points in the persistence diagrams $\dgm\left(f_{(a,b)}^*\right)$ are far enough from each other, and that the same holds for $\dgm\left(g_{(a,b)}^*\right)$.

\section{The coherent 2-dimensional matching distance}\label{CMD}

In Sections~\ref{2DS} and~\ref{preparing} we have introduced some machinery in order to define and manage the coherent 2-dimensional matching distance between 2D persistence diagrams. Our next step is the definition of the coherent 2-dimensional matching distance~\cite{CeEtFr16}.

The existence of monodromy implies that each loop in $\mathrm{Reg}(f)$ induces a permutation on $\dgm\left(f_{(a,b)}^*\right)$. In other words, it is not possible to establish which point in $\dgm\left(f_{(a,b)}^*\right)$ corresponds to which point in $\dgm\left(f^*_{(a',b')}\right)$ for $(a,b)\neq (a',b')$, since the answer depends on the path that is considered from $(a,b)$ to $(a',b')$ in the parameter space $\mathrm{Reg}(f)$. As a consequence, different paths going from $(a,b)$ to $(a',b')$ might produce different results while ``transporting'' a matching $\sigma_{(a,b)}:\dgm\left(f_{(a,b)}^*\right)\to \dgm\left(g_{(a,b)}^*\right)$ to another point $(a',b')\in {\mathcal {P}}(\Lambda^+)$. Nevertheless, we will see that it is still possible to define a notion of coherent 2-dimensional matching distance. These ideas will be formalized in the upcoming sections.

\subsection{Transporting a matching along a path}\label{T}
Using the Stability Theorem~\ref{stability}, we will clarify the notion of transport of cornerpoints, that is, we will follow the movement of each point $P\in\dgm\left(f_{(a,b)}^*\right)$ when $(a,b)$ varies in $\mathcal{P}(\Lambda^+)$.
First, we need to specify the concept of transporting a point $X\in\dgm\left(f_{(a(0),b(0))}^*\right)$ along a path $(a(t),b(t))$ in $U$, for $t\in [0,1]$.
We recall that $\Delta:=\{(u,v)\in\R\times\R:u=v\}$, $\Delta^*:=\{(u,v)\in \R\times (\R\cup \{\infty\}):u<v\}$, and we are assuming $f \in\mathcal{F}_{U,c}$.

\begin{definition}[Induced path]\label{admPath}
A continuous path $P:[0,1]\to\Delta^*\cup \{\Delta\}$ is said to be \emph{induced by the path $\pi:[0,1]\to U$} if $P(\tau)\in \dgm\left(f_{\pi(\tau)}^*\right)$ for every $\tau\in [0,1]$.
\end{definition}

\begin{proposition}\label{uniquePath}
Let $\pi:[0, 1]\to U$ be a continuous path. For every point $X\in\dgm\left(f_{\pi(0)}^*\right)$, a unique continuous path $P:[0,1]\to\Delta^*\cup \{\Delta\}$ induced by $\pi$ exists, such that
$P(0) = X$. If $X= \Delta$ then $P([0,1])=\{\Delta\}$, otherwise $P([0,1])\subseteq \Delta^*$.
\end{proposition}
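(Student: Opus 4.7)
The approach is a path-lifting argument that exploits the $2c$-separation built into the definition of $\mathcal{F}_{U,c}$ together with the Stability Theorem~\ref{stability}. The case $X=\Delta$ is immediate: the constant path $P\equiv\Delta$ is induced by $\pi$, and it is the only such lift, because for every $t_0\in[0,1]$ the assumption $f\in\mathcal{F}_{U,c}$ gives $d(Y,\Delta)>2c$ for every $Y\in \dgm(f^*_{\pi(t_0)})\setminus\{\Delta\}$; hence the set $\{t\in[0,1]:P(t)=\Delta\}$ is nonempty, closed by continuity, and open by the gap of $2c$ around $\Delta$, so it coincides with $[0,1]$.

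Now assume $X\neq\Delta$. The map $(a,b)\mapsto f^*_{(a,b)}$ is continuous from $U$ to $C(M,\R)$ in the sup-norm, so by Theorem~\ref{stability} the assignment $(a,b)\mapsto \dgm(f^*_{(a,b)})$ is continuous in the bottleneck distance. Combined with the strict separation $d(Y_1,Y_2)>2c$ for distinct $Y_1,Y_2\in \dgm(f^*_{(a,b)})$, this gives the key local lifting property: for each $t_0\in[0,1]$ there is a neighborhood $I$ of $t_0$ such that, for every $t\in I$ and every $Y\in\dgm(f^*_{\pi(t_0)})\setminus\{\Delta\}$, there exists a \emph{unique} point $\Phi_{t_0,t}(Y)\in\dgm(f^*_{\pi(t)})$ with $d(Y,\Phi_{t_0,t}(Y))<c$. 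Existence comes from choosing a matching realizing the bottleneck distance $<c$ on $I$, noting that $Y$ cannot be sent to $\Delta$ since $d(Y,\Delta)>2c$; uniqueness comes from the $2c$-separation of the points in $\dgm(f^*_{\pi(t)})$.

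Using compactness of $[0,1]$, I would cover it by finitely many such neighborhoods $I_0,\dots,I_n$ and define $P$ piecewise by $P(t):=\Phi_{t_i,t}(P(t_i))$ on $I_i$, starting with $P(0)=X$. Continuity on each $I_i$ follows from a triangle-inequality argument: if $t,s\in I_i$ are close, then $d_B(\dgm(f^*_{\pi(s)}),\dgm(f^*_{\pi(t)}))$ is small, and the unique point of $\dgm(f^*_{\pi(s)})$ within $c$ of $P(t)$ must coincide, by the $2c$-separation, with $\Phi_{t_i,s}(P(t_i))=P(s)$. The lifts on adjacent intervals agree at overlaps by the same uniqueness principle. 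By construction $P(t)\in\Delta^*$ for every $t$. Uniqueness of $P$ then follows from the standard open-closed argument: if $P_1,P_2$ are two such lifts, the set $\{t:P_1(t)=P_2(t)\}$ contains $0$, is closed by continuity, and is open because at any $t_0$ where $P_1(t_0)=P_2(t_0)=Y$, continuity forces both $P_i(t)$ to lie within $c/2$ of $Y$ for $t$ near $t_0$, hence to coincide with the unique nearest point of $\dgm(f^*_{\pi(t)})$ to $Y$.

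The main technical point I expect to dwell on is the verification of continuity of $P$ on each $I_i$, which is where the strict inequality $>2c$ (rather than $\ge 2c$) is essential: it guarantees that the ``follow the nearest point'' rule is unambiguous not only at the base point $t_i$ but uniformly on a full neighborhood, allowing the local selections to be glued into a genuinely continuous global lift. Once this is in place, both existence and uniqueness reduce to clean open-and-closed arguments on $[0,1]$.
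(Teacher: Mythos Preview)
Your proposal is correct and rests on exactly the same ingredients as the paper's proof: the Stability Theorem together with the strict $2c$-separation from $f\in\mathcal{F}_{U,c}$ to obtain local unique lifting, followed by a connectedness argument on $[0,1]$. The only difference is organizational---the paper runs a supremum/continuation argument (defining $\bar\theta=\sup\{\theta:$ a unique lift exists on $[0,\theta]\}$ and showing $\bar\theta=1$), whereas you use the equivalent compactness/finite-cover formulation of path lifting; both are standard and neither offers a real advantage here.
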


\begin{proof}\label{proofprop_uniquepath}
If $X=\Delta$ our statement trivially holds, since we can set $P(\tau):=\Delta$ for every $\tau\in[0,1]$.
The uniqueness of this path follows from the assumption $f \in\mathcal{F}_{U,c}$.
Therefore, let us assume that $X\neq\Delta$.

Let $\Theta$ be the set of all values $\theta\in[0,1]$ such that there exists exactly one
continuous path $P_\theta:[0,\theta]\to\Delta^*$ satisfying the equalities $P_\theta(0)=X$ and $P_\theta(\tau)\in\dgm\left(f_{\pi(\tau)}^*\right)\setminus\{\Delta\}$ for every $\tau\in [0,\theta]$. Obviously, $0\in\Theta$.
Furthermore, if $\theta_1,\theta_2\in\Theta$ and $\theta_1\le\theta_2$ then $P_{\theta_1}$ and $P_{\theta_2}$ coincide on $[0,\theta_1]$. Set $\bar\theta:=\sup \Theta$.

First of all, the fact that $\lim_{\theta\to\bar\theta^-}\|f_{\pi(\theta)}^*-f_{\pi(\bar\theta)}^*\|_\infty=0$, the Stability Theorem~\ref{stability} and the assumption $f\in \mathcal{F}_{U,c}$ imply that $\lim_{\theta\to\bar\theta^-} P_\theta(\theta)$ exists and belongs to $\dgm\left(f_{\pi(\bar\theta)}^*\right)\setminus\{\Delta\}$.
By setting $P_{\bar\theta}(\tau)=P_\tau(\tau)$ for $\tau\in[0,\bar\theta[$ and $P_{\bar\theta}(\bar\theta)=\lim_{\theta\to\bar\theta^-} P_\theta(\theta)$, we get a continuous path $P_{\bar\theta}:[0,\bar\theta]\to\Delta^*$ with $P_{\bar\theta}(0)=X$ and $P_{\bar\theta}(\tau)\in\dgm\left(f_{\pi(\tau)}^*\right)\setminus\{\Delta\}$ for every $\tau\in [0,\bar\theta]$, so proving that $\bar\theta\in\Theta$.

We will prove by contradiction that $\bar\theta=1$. Therefore, suppose that $\bar\theta<1$, choose a small $\eta>0$ and consider $\tau\in [\bar\theta,\bar\theta+\eta]\subseteq [\bar\theta,1]$. The Stability Theorem~\ref{stability} and the assumptions that $\pi(\bar\theta)\in U$ and $f\in\mathcal{F}_{U,c}$ imply the existence of an $\eps>0$ such that if $\eta$ is small enough, then $\dgm\left(f_{\pi(\tau)}^*\right)\cap \Delta^*$ contains exactly one point $Q(\tau)$ at a distance less than or equal to $\eps$ from $P_{\bar\theta}(\bar\theta)$. We can then consider the path $P_{\bar\theta+\eta}:[0,\bar\theta+\eta]\to\Delta^*$ defined by setting $P_{\bar\theta+\eta}(\tau):=P_{\bar\theta}(\tau)$ for $\tau\in [0, \bar\theta]$, and
$P_{\bar\theta+\eta}(\tau):=Q(\tau)$ for $\tau\in\ ]\bar\theta, \bar\theta+\eta]$. Once again, by the Stability Theorem~\ref{stability}, it is easy to prove that the path $P_{\bar\theta+\eta}$ is continuous. Moreover no other continuous path $P'_{\bar\theta+\eta}:[0,\bar\theta+\eta]\to\Delta^*$ can exist, verifying the property $P'_{\bar\theta+\eta}(\tau)\in \dgm\left(f_{\pi(\tau)}^*\right)$ for every $\tau\in [0,\bar\theta+\eta]$.
If this were not the case, then a $\theta'\in\ ]\bar\theta,\bar\theta+\eta]$ should exist, such that
$P'_{\bar\theta+\eta}$ differs from $P_{\bar\theta+\eta}$ in any right neighborhood of $\theta'$, while they coincide on $[0,\theta']$. Because of the 1-dimensional Stability Theorem and  the definition of induced path, then $\pi(\theta')$ would be a singular pair belonging to $U$ against the assumption $f\in \mathcal{F}_{U,c}$.

As a consequence, $\bar\theta+\eta$ should belong to $\Theta$, against the definition of $\bar\theta$. This contradiction shows that $\bar\theta=1$, and hence our statement is proved.
\end{proof}

\subsubsection{The definition of transported matching}
\label{deftm}
With reference to the previous Proposition~\ref{uniquePath}, we say that \emph{$\pi$ transports $X$ to $X'=P(1)$ with respect to $f$} and write $T^f_\pi(X)=X'$. We observe that $T^f_\pi$ is a bijection from
$\dgm\left(f_{\pi(0)}^*\right)$ to $\dgm\left(f_{\pi(1)}^*\right)$, whose inverse is the map $T^f_{\pi^{-1}}$, where
$\pi^{-1}$ is the inverse path of $\pi$.
Moreover,  $T^f_\pi\left(\dgm\left(f_{\pi(0)}^*\right)\setminus\{\Delta\}\right)=\dgm\left(f_{\pi(1)}^*\right)\setminus\{\Delta\}$
and $T^f_\pi(\Delta)=\Delta$. In other words, the transport takes points different from $\Delta$ to points different from $\Delta$, and $\Delta$ to $\Delta$.
We now need to define the concept of transporting a matching along a path $\pi: [0,1]\to U$ with $\pi(0) = (a,b)$.  Let $\sigma_{(a,b)}$ be a matching between $\dgm\left(f_{(a,b)}^*\right)$ and $\dgm\left(g_{(a,b)}^*\right)$, with $(a,b)$ an element of $U$, assuming $f,g\in\mathcal{F}_{U,c}$. We can naturally associate to $\sigma_{(a,b)}$ a matching $\sigma_{\pi(1)}:\dgm\left(f_{\pi(1)}^*\right)\to\dgm\left(g_{\pi(1)}^*\right)$. Suppose that $\sigma_{(a,b)}(X)=Y$. We set  $\sigma_{\pi(1)} (X') = Y'$ if and only if $T^f_\pi(X)=X'$ and $T^g_\pi(Y)=Y'$.
We also say that \emph{$\pi$ transports $\sigma_{(a,b)}$ to $\sigma_{\pi(1)}$ along $\pi$ with respect to the pair $(f,g)$}. The transported matching will be denoted by the symbol
$T^{(f,g)}_\pi\left(\sigma_{(a,b)}\right)$.
More formally, we define
$T^{(f,g)}_\pi\left(\sigma_{\pi(0)}\right):\dgm\left(f_{\pi(1)}^*\right)\to \dgm\left(g_{\pi(1)}^*\right)$
by setting
$T^{(f,g)}_\pi\left(\sigma_{\pi(0)}\right):=T^g_\pi\circ\sigma_{\pi(0)}\circ \left(T^f_\pi\right)^{-1}$ (see Figure~\ref{dtm}).
We observe that $T^{(f,g)}_\pi$ is a map from
$\Sigma_{\pi(0)}^{(f,g)}$ to $\Sigma_{\pi(1)}^{(f,g)}$,
where $\Sigma_{(a, b)}^{(f,g)}$ is the set of all matchings from $\dgm\left(f_{(a,b)}^*\right)$ to $\dgm\left(g_{(a,b)}^*\right)$, with $(a,b)\in U$.

\begin{figure}
\[
\begin{tikzcd}
\dgm\left(f_{\pi(0)}^*\right) \arrow{rrrrrr}{\sigma_{\pi(0)}} \arrow{dd}{T^{f}_{\pi}} & & & & & & \dgm\left(g_{\pi(0)}^*\right) \arrow{dd}{T^{g}_{\pi}}\\
\\
\dgm\left(f_{\pi(1)}^*\right) \arrow{rrrrrr}{T^{(f,g)}_{\pi}\left(\sigma_{\pi(0)}\right) \,:=\, T^{g}_{\pi}\,\circ\, \sigma_{\pi(0)}\, \circ \,\left(T^{f}_{\pi}\right)^{-1}} & & & & & & \dgm\left(g_{\pi(1)}^*\right)
\end{tikzcd}
\]
\caption{The definition of $T^{(f,g)}_\pi\left(\sigma_{\pi(0)}\right)$.}
\label{dtm}
\end{figure}

\subsubsection{Some properties of the transport}
\label{spott}The next property trivially follows from the definition of transport.

\begin{proposition}\label{composition}
Let $\pi_1,\pi_2$ be two continuous paths in $U$, with $\pi_1(1)=\pi_2(0)$. Let $\pi_1*\pi_2$ be their composition, i.e. the continuous path $\pi_1\ast\pi_2:[0,1]\to U$ defined by setting $\pi_1\ast\pi_2(t):=\pi_1(2t)$ for $0\le t\le 1/2$ and $\pi_1\ast\pi_2(t):=\pi_2(2t-1)$ for $1/2\le t\le 1$. Then $T^f_{\pi_1\ast\pi_2}=T^f_{\pi_2}\circ T^f_{\pi_1}$.  As a consequence,
$T^{(f,g)}_{\pi_1\ast \pi_2}=
T^{(f,g)}_{\pi_2}\circ
T^{(f,g)}_{\pi_1}$.
\end{proposition}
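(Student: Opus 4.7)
The plan is to reduce everything to the uniqueness statement in Proposition~\ref{uniquePath}. First I would unpack the definition of the transport for a single function. Given $X\in \dgm\left(f^*_{\pi_1(0)}\right)$, let $P:[0,1]\to \Delta^*\cup\{\Delta\}$ be the unique continuous path induced by $\pi_1$ with $P(0)=X$, and let $Q:[0,1]\to \Delta^*\cup\{\Delta\}$ be the unique continuous path induced by $\pi_2$ with $Q(0)=P(1)=T^f_{\pi_1}(X)$. By construction $Q(1)=T^f_{\pi_2}(T^f_{\pi_1}(X))$.

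Next I would build the concatenation $R:[0,1]\to \Delta^*\cup\{\Delta\}$ defined by $R(t)=P(2t)$ for $t\in[0,1/2]$ and $R(t)=Q(2t-1)$ for $t\in[1/2,1]$. The three things to verify are immediate: $R$ is continuous because $P(1)=Q(0)$; $R$ is induced by $\pi_1\ast\pi_2$ because $P$ and $Q$ are induced by $\pi_1$ and $\pi_2$ respectively and the two halves of $\pi_1\ast\pi_2$ reparameterize $\pi_1$ and $\pi_2$; and $R(0)=X$. By the uniqueness part of Proposition~\ref{uniquePath}, $R$ is the induced path used in the definition of $T^f_{\pi_1\ast\pi_2}(X)$, so $T^f_{\pi_1\ast\pi_2}(X)=R(1)=Q(1)=T^f_{\pi_2}(T^f_{\pi_1}(X))$. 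Since $X$ was arbitrary, $T^f_{\pi_1\ast\pi_2}=T^f_{\pi_2}\circ T^f_{\pi_1}$, and the same identity holds replacing $f$ by $g$.

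The second identity is then a formal consequence. Using the definition given at the end of Subsection~\ref{deftm}, together with the first identity applied to both $f$ and $g$, I would compute
\[
T^{(f,g)}_{\pi_1\ast\pi_2}\left(\sigma\right)
= T^g_{\pi_1\ast\pi_2}\circ \sigma \circ \left(T^f_{\pi_1\ast\pi_2}\right)^{-1}
= T^g_{\pi_2}\circ T^g_{\pi_1}\circ \sigma \circ \left(T^f_{\pi_1}\right)^{-1}\circ \left(T^f_{\pi_2}\right)^{-1},
\]
and then regroup the middle three factors as $T^{(f,g)}_{\pi_1}(\sigma)$ to obtain $T^{(f,g)}_{\pi_2}\left(T^{(f,g)}_{\pi_1}(\sigma)\right)$.

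There is no real obstacle here: the only non-trivial ingredient is the uniqueness of induced paths, already supplied by Proposition~\ref{uniquePath}, and the mild verification that a concatenation of continuous induced paths that agree at the gluing point is again a continuous induced path. This is the reason the authors describe the statement as trivial.
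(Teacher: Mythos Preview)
Your argument is correct and is exactly the unpacking of what the paper means by ``trivially follows from the definition of transport'': the transport is defined via the unique induced path of Proposition~\ref{uniquePath}, concatenating induced paths yields an induced path for $\pi_1\ast\pi_2$, and the matching-level identity is then the formal manipulation you wrote. The paper gives no further details beyond that one-line remark, so your write-up is simply a fuller version of the same approach.
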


In order to proceed, we need to recall the following result (Theorem 4.5 in \cite{CeDi*13}).

\begin{proposition}\label{oldbound}
Let $(a,b),(a',b')\in {\mathcal {P}}(\Lambda^+)$ with $|a-a'|,|b-b'|\le \eps<\min (a,1-a)$.
Then $d_B\left(\dgm \left(f^*_{(a,b)}\right),\dgm \left(f^*_{(a',b')}\right)\right)\leq\eps\cdot \frac{\|f\|_\infty + \|(a,1-a)\|_\infty + \|(b,-b)\|_\infty}{\min (a\cdot (a-\eps),(1-a)\cdot (1-a-\eps))}$.
\end{proposition}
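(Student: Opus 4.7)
\smallskip

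The plan is to deduce Proposition~\ref{oldbound} from the $1$-dimensional Stability Theorem~\ref{stability}, so the whole task reduces to producing the claimed pointwise bound
$\|f^*_{(a,b)}-f^*_{(a',b')}\|_\infty \le \eps\cdot \frac{\|f\|_\infty + \|(a,1-a)\|_\infty + \|(b,-b)\|_\infty}{\min(a(a-\eps),(1-a)(1-a-\eps))}$. Once this is established, Theorem~\ref{stability} applied to the pair of real-valued functions $f^*_{(a,b)}, f^*_{(a',b')}:M\to\R$ gives the desired bottleneck estimate immediately.

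To estimate the sup-norm, I would fix $x\in M$ and write
$f^*_{(a,b)}(x)=\max\bigl\{A(x),B(x)\bigr\}$ with $A(x):=\tfrac{\min\{a,1-a\}(f_1(x)-b)}{a}$ and $B(x):=\tfrac{\min\{a,1-a\}(f_2(x)+b)}{1-a}$, and analogously $A'(x),B'(x)$ for $(a',b')$. Using the elementary inequality $|\max\{\alpha,\beta\}-\max\{\alpha',\beta'\}|\le \max\{|\alpha-\alpha'|,|\beta-\beta'|\}$, the problem is reduced to bounding $|A(x)-A'(x)|$ and $|B(x)-B'(x)|$ separately. Each of these is of the form $\bigl|\tfrac{\mu(a)(f_i(x)\mp b)}{\lambda(a)} - \tfrac{\mu(a')(f_i(x)\mp b')}{\lambda(a')}\bigr|$ with $\lambda$ equal to $a$ or $1-a$; after putting the two fractions over a common denominator $\lambda(a)\lambda(a')$ and grouping terms, one obtains a numerator that splits into a piece proportional to $|a-a'|\cdot(|f_i(x)|+|b|)$ and a piece proportional to $|b-b'|\cdot \lambda(a')$, while the denominator can be minorized by $\lambda(a)(\lambda(a)-\eps)$ thanks to $|a-a'|\le\eps$.

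The hypothesis $\eps<\min(a,1-a)$ guarantees that the denominators $a(a-\eps)$ and $(1-a)(1-a-\eps)$ are strictly positive, so the bounds make sense; combining the two cases (the $A$-term and the $B$-term) and using $|f_i(x)|\le\|f\|_\infty$, $\lambda(a')\le\|(a,1-a)\|_\infty+\eps$ (and absorbing the factor $\min\{a,1-a\}\le \lambda(a)$ that multiplies the whole expression) yields the stated numerator $\|f\|_\infty+\|(a,1-a)\|_\infty+\|(b,-b)\|_\infty$ up to the common factor $\eps$.

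The main obstacle I expect is bookkeeping rather than conceptual: one has to treat the four possible regimes according to whether $\min\{a,1-a\}$ equals $a$ or $1-a$ (and similarly for $a'$), since the normalizing factor $\min\{a,1-a\}$ depends on $a$ in a non-smooth way, and one has to make sure the final bound is uniform in these cases so that the denominator $\min(a(a-\eps),(1-a)(1-a-\eps))$ correctly dominates both contributions. Once this case analysis is carried out, the estimate is a direct algebraic manipulation and plugging into Theorem~\ref{stability} finishes the argument.
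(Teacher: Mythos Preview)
The paper does not give its own proof of this proposition: it simply recalls it as Theorem~4.5 in \cite{CeDi*13}. Your strategy---establishing the pointwise bound $\|f^*_{(a,b)}-f^*_{(a',b')}\|_\infty$ and then invoking the 1D Stability Theorem~\ref{stability}---is exactly the argument used in that reference, so your proposal is correct and in line with the original source; the case analysis on $\min\{a,1-a\}$ versus $\min\{a',1-a'\}$ that you flag is indeed the only part requiring care.
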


Proposition~\ref{oldbound} allows to prove the following result, implying that the transport along a path in $U$ is continuous with respect to changes in the path.

\begin{proposition}\label{transcontinuity}
Let $f\in \mathcal{F}_{U,c}$.
Let $\bar \pi=\left(\bar a,\bar b\right):[0,1]\to U$ be a continuous path.
Choose a positive $\eta<\min_{t\in [0,1]}\min (\bar a(t),1-\bar a(t))$.
Let us define $$C_{\eta}:=\max_{t\in [0,1]}\frac{\|f\|_\infty + \|(\bar a(t),1-\bar a(t))\|_\infty + \|(\bar b(t),-\bar b(t))\|_\infty}{\min (\bar a(t)\cdot (\bar a(t)-\eta),(1-\bar a(t))\cdot (1-\bar a(t)-\eta))}.$$
If $\pi:[0,1]\to U$ is a continuous path such that $\pi(0)=\bar\pi(0)$ and $\left\|\bar\pi-\pi\right\|_\infty\le \min(\eta,c/ C_{\eta})$, then the inequality $\left\|T^f_{\bar\pi}(X)-T^f_{\pi}(X)\right\|_\infty\le \left\|\bar\pi-\pi\right\|_\infty
\cdot C_\eta$ holds for every $X\in \dgm\left(f_{\left(\bar a,\bar b\right)}^*\right)\setminus\{\Delta\}$.
\end{proposition}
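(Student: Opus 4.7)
The plan is to fix $X \in \dgm\left(f^*_{\bar\pi(0)}\right) \setminus \{\Delta\}$ and track the two continuous induced paths $\bar P(t) := T^f_{\bar\pi|_{[0,t]}}(X)$ and $P(t) := T^f_{\pi|_{[0,t]}}(X)$ furnished by Proposition~\ref{uniquePath}. Since $\pi(0)=\bar\pi(0)$, both start at $X$ and stay in $\Delta^*$. The whole proof reduces to bounding $d(\bar P(t),P(t))$ at $t=1$, because $f\in \mathcal{F}_{U,c}$ guarantees that $\bar P(t)$ and $P(t)$ lie at sup-norm distance $>2c$ from $\Delta$, so the extended metric $d$ agrees with $\|\cdot\|_\infty$ on them.

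First I would apply Proposition~\ref{oldbound} pointwise, with $(a,b)=\bar\pi(t)$, $(a',b')=\pi(t)$ and $\varepsilon = \|\bar\pi(t)-\pi(t)\|_\infty \le \eta$: the denominator $\min(a(a-\varepsilon),(1-a)(1-a-\varepsilon))$ is monotone decreasing in $\varepsilon$, so enlarging $\varepsilon$ to $\eta$ only worsens the estimate, and taking the supremum over $t\in[0,1]$ produces the constant $C_\eta$. This gives
$$d_B\bigl(\dgm(f^*_{\bar\pi(t)}),\dgm(f^*_{\pi(t)})\bigr)\ \le\ \|\bar\pi-\pi\|_\infty\cdot C_\eta\ \le\ c$$
for every $t\in[0,1]$, where the last inequality uses the hypothesis $\|\bar\pi-\pi\|_\infty\le c/C_\eta$.

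The heart of the argument is then to define, for each $t$, an auxiliary point $\tilde P(t)\in\dgm\left(f^*_{\pi(t)}\right)\setminus\{\Delta\}$ as the unique point at distance at most $\|\bar\pi-\pi\|_\infty\cdot C_\eta$ from $\bar P(t)$. Existence comes from picking an optimal matching realizing the bottleneck bound above; that $\tilde P(t)\ne\Delta$ follows from $d(\bar P(t),\Delta)>2c$, and uniqueness follows because any two candidates would be within $2c$ of each other by the triangle inequality, contradicting the $>2c$ separation inside $\dgm\left(f^*_{\pi(t)}\right)$ guaranteed by $f\in\mathcal{F}_{U,c}$. The main obstacle is showing that $t\mapsto\tilde P(t)$ is continuous: this I would handle by a subsequential-limit argument based on the Stability Theorem~\ref{stability} and the finiteness of the diagrams (Corollary~\ref{corfiniteness}), using the fact that any accumulation point of $\tilde P(t_n)$ stays at distance strictly less than $2c$ from $\bar P(t_0)$, hence away from $\Delta$, and must therefore lie in $\dgm\left(f^*_{\pi(t_0)}\right)$; uniqueness of $\tilde P(t_0)$ then pins it down. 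Once continuity is established, $\tilde P$ is a continuous induced path from $X$ along $\pi$, so the uniqueness clause of Proposition~\ref{uniquePath} forces $\tilde P\equiv P$, yielding the desired bound $\|\bar P(t)-P(t)\|_\infty = d(\bar P(t),\tilde P(t)) \le \|\bar\pi-\pi\|_\infty\cdot C_\eta$.
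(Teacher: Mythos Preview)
Your argument is correct and rests on the same ingredients as the paper's proof: the quantitative bottleneck bound from Proposition~\ref{oldbound} combined with the $>2c$ separation coming from $f\in\mathcal{F}_{U,c}$, plus the uniqueness clause of Proposition~\ref{uniquePath}. The organization differs slightly. The paper runs a maximal-time contradiction: it defines $\theta$ as the largest $\tau$ for which the bound $\|T^f_{\bar\pi_\tau}(X)-T^f_{\pi_\tau}(X)\|_\infty\le\|\bar\pi-\pi\|_\infty\cdot C_\eta$ holds, assumes $\theta<1$, and at a nearby $\theta_+$ produces a second point $Y_{\theta_+}\in\dgm(f^*_{\pi(\theta_+)})$ close to $T^f_{\bar\pi_{\theta_+}}(X)$ but distinct from $T^f_{\pi_{\theta_+}}(X)$; passing to the limit contradicts the $2c$ separation. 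You instead build the ``shadow path'' $\tilde P$ directly and invoke uniqueness of induced paths to identify it with $P$. Both arguments are really the same observation (the nearby-point map is well defined and varies continuously) viewed from opposite ends; your version avoids the contradiction scaffolding at the cost of having to argue continuity of $\tilde P$ explicitly, which is the part your sketch treats most lightly. When you write it out in full, make sure to note that for the points involved one has $d(\bar P(t),\Delta)>2c$ and $d(\tilde P(t),\Delta)>c$, so the minimum in the definition of $d$ is realized by the $\|\cdot\|_\infty$ term, which is what lets you pass between $d$ and $\|\cdot\|_\infty$ freely.
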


\begin{proof}
By Proposition~\ref{uniquePath}, there is a unique path $\bar P:[0,1]\to\Delta^*$ induced by $\bar \pi$ such that $\bar P(0)=X$ and $\bar P(1)=T_{\bar\pi}^f(X)$, and a unique path $P:[0,1]\to\Delta^*$ induced by $\pi$ such that $P(0)=X$ and $P(1)=T_{\pi}^f(X)$.
Let us set
$\theta:= \max\left\{\tau\in [0,1]:\left\|T_{\bar\pi_\tau}^f(X)-T_{\pi_\tau}^f(X)\right\|_\infty\le
\|\bar\pi-\pi\|_{\infty}\cdot C_\eta \right\}$,
where the paths $\bar \pi_\tau,\pi_\tau:[0,1]\to U$ are defined by setting $\bar\pi_\tau(t):=\bar\pi(\tau t)$ and
$\pi_\tau(t):=\pi(\tau t)$ for every $t\in [0,1]$. In plain words, $T_{\bar\pi_\tau}^f(X)$ and $T_{\pi_\tau}^f(X)$ represent the transport of $X$ along $\bar\pi$ and $\pi$ with respect to $f$, respectively, for the time $\tau$ instead of the usual time $1$. We observe that $T_{\bar\pi_0}^f(X)=T_{\pi_0}^f(X)=X$.
Moreover, $\left\|\bar\pi_\tau-\pi_\tau\right\|_\infty\le\left\|\bar\pi-\pi\right\|_\infty$ for every $\tau\in [0,1]$.

If $\theta<1$, then on the one hand we can find a $\theta_+\in\ ]\theta,1]$ arbitrarily close to $\theta$ such that
$\left\|T_{\bar\pi_{\theta_+}}^f(X)-T_{\pi_{\theta_+}}^f(X)\right\|_\infty> \left\|\bar\pi-\pi\right\|_\infty
\cdot C_\eta$ and
$\left\|T_{\bar\pi_{\theta_+}}^f(X)-T_{\pi_{\theta_+}}^f(X)\right\|_\infty$ is arbitrarily close to $\left\|\bar\pi-\pi\right\|_\infty
\cdot C_\eta$.
We recall that $T_{\bar\pi_{\theta_+}}^f(X)\in \dgm\left(f^*_{\bar\pi_{\theta_+}(1)}\right)\setminus\{\Delta\}$ and
$T_{\pi_{\theta_+}}^f(X)\in \dgm\left(f^*_{\pi_{\theta_+}(1)}\right)\setminus\{\Delta\}$.

On the other hand, for every $t\in[0,1]$ the inequalities $$|\bar a(t)-a(t)|,|\bar b(t)-b(t)|\le \left\|\bar\pi-\pi\right\|_\infty\le \eta<\min (\bar a(t),1-\bar a(t))$$ hold.
For every positive $\delta\le \eta$, let us define $$C_{\delta}:=\max_{t\in [0,1]}\frac{\|f\|_\infty + \|(\bar a(t),1-\bar a(t))\|_\infty + \|(\bar b(t),-\bar b(t))\|_\infty}{\min (\bar a(t)\cdot (\bar a(t)-\delta),(1-\bar a(t))\cdot (1-\bar a(t)-\delta))}.$$
If we apply Proposition~\ref{oldbound} for $\eps:=\left\|\bar\pi-\pi\right\|_\infty$, we obtain the inequalities
$$d_B\left(\dgm\left(f_{\bar\pi_{ \theta_+}(1)}^*\right),\dgm\left(f_{\pi_{\theta_+}(1)}^*\right)\right)\leq \|\bar\pi-\pi\|_{\infty}\cdot C_{\left\|\bar\pi-\pi\right\|_\infty}\le \|\bar\pi-\pi\|_{\infty}\cdot C_\eta$$
so that a point $Y_{\theta_+}\in \dgm\left(f^*_{\pi_{\theta_+}(1)}\right)$ exists such that
$d\left(Y_{\theta_+},T_{\bar\pi_{\theta_+}}^f(X)\right)\le\|\bar\pi-\pi\|_{\infty}\cdot C_\eta\le c$, because
$\left\|\bar\pi-\pi\right\|_\infty\le c/
C_\eta$.
Since $f\in \mathcal{F}_{U,c}$, we have that $d\left(T_{\bar\pi_{\theta_+}}^f(X),\Delta\right)> 2c$. Therefore,
$d\left(Y_{\theta_+},\Delta\right)\ge d\left(T_{\bar\pi_{\theta_+}}^f(X),\Delta\right)-d\left(Y_{\theta_+},T_{\bar\pi_{\theta_+}}^f(X)\right)>2c-c=c$.
It follows that $Y_{\theta_+}\neq\Delta$ and
$$\left\|Y_{\theta_+}-T_{\bar\pi_{\theta_+}}^f(X)\right\|_\infty=d\left(Y_{\theta_+},T_{\bar\pi_{\theta_+}}^f(X)\right)
\le \|\bar\pi-\pi\|_{\infty}\cdot C_\eta
<
\left\|T_{\pi_{\theta_+}}^f(X)-T_{\bar\pi_{\theta_+}}^f(X)\right\|_\infty,$$ so that $Y_{\theta_+}\neq T_{\pi_{\theta_+}}^f(X)$.
Since $\theta_+$ is arbitrarily close to $\theta$, the 1D Stability Theorem~\ref{stability} implies that
a point $Z\in \dgm\left(f^*_{\pi_{\theta}(1)}\right)$ exists such that
the inequality $d\left(Z,T_{\bar\pi_{\theta}}^f(X)\right)\le \|\bar\pi-\pi\|_{\infty}\cdot C_\eta$ holds,
where $Z$ is the limit of the previously considered points $Y_{\theta_+}$. Furthermore,
$\left\|T_{\pi_{\theta}}^f(X)-T_{\bar\pi_{\theta}}^f(X)\right\|_\infty\le \|\bar\pi-\pi\|_{\infty}\cdot C_\eta$.
If $Z\neq T_{\pi_{\theta}}^f(X)$, then $\dgm\left(f^*_{\pi_{\theta}(1)}\right)$ contains at least two points ($Z$ and $T_{\pi_{\theta}}^f(X)$) that have a distance less than or equal to $\|\bar\pi-\pi\|_{\infty}\cdot C_\eta$ from $T_{\bar\pi_{\theta}}^f(X)$, and hence these two points have a distance less than or equal to $2\cdot \|\bar\pi-\pi\|_{\infty}\cdot C_\eta\le 2c$ from each other. If $Z= T_{\pi_{\theta}}^f(X)$, then this point is double in $\dgm\left(f^*_{\pi_{\theta}(1)}\right)$,
because of the construction of $Z$ and the inequality $Y_{\theta_+}\neq T_{\pi_{\theta_+}}^f(X)$.
Both cases contradict the assumption that $f\in \mathcal{F}_{U,c}$.

Therefore, if $\left\|\bar\pi-\pi\right\|_\infty\le \min(\eta,c/C_\eta)$ then $\theta=1$, and hence
we have that $\left\|T_{\bar\pi}^f(X)-T_{\pi}^f(X)\right\|_\infty=
\left\|T_{\bar\pi_1}^f(X)-T_{\pi_1}^f(X)\right\|_\infty
\le \left\|\bar\pi-\pi\right\|_\infty
\cdot C_\eta$.
\end{proof}

\subsection{Each loop in U induces a permutation on $\dgm\left(f_{(a,b)}^*\right)$}\label{loopspermut}

From the fact that the transport
along a path in $U$ is continuous with respect to changes in the path (consequence of Proposition~\ref{transcontinuity})
and the fact that $\dgm\left(f_{(a,b)}^*\right)\setminus \{\Delta\}$ is a discrete set,
the next result immediately follows.

\begin{proposition}\label{homotopy}
If two paths $\pi,\pi'$ in $U$ are homotopic to each other relatively to their common extrema, then $T^f_{\pi}\equiv T^f_{\pi'}$.
\end{proposition}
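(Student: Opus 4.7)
The plan is to fix a homotopy $H\colon [0,1]\times [0,1]\to U$ relative to endpoints, with $H(\cdot,0)=\pi$, $H(\cdot,1)=\pi'$, $H(0,s)=\pi(0)$ and $H(1,s)=\pi(1)$ for every $s\in[0,1]$. Writing $\pi_s(t):=H(t,s)$, I would then fix an arbitrary $X\in \dgm\left(f^*_{\pi(0)}\right)$ and analyze the map $\Phi_X\colon s\mapsto T^f_{\pi_s}(X)$. Since every $\pi_s$ shares the endpoint $\pi(1)$, the image $\Phi_X([0,1])$ lies entirely in $\dgm\left(f^*_{\pi(1)}\right)$; if $X=\Delta$ then $\Phi_X\equiv \Delta$ by Proposition~\ref{uniquePath}, and if $X\neq\Delta$ then $\Phi_X([0,1])\subseteq \dgm\left(f^*_{\pi(1)}\right)\setminus\{\Delta\}$. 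The goal is to show $\Phi_X$ is constant, which yields $T^f_\pi(X)=\Phi_X(0)=\Phi_X(1)=T^f_{\pi'}(X)$ and hence the proposition.

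To prove constancy, I would establish that $\Phi_X$ is locally constant. Fix $s_0\in[0,1]$ and choose a positive $\eta$ strictly smaller than $\min_{t\in[0,1]}\min(a_{s_0}(t),1-a_{s_0}(t))$, where we write $\pi_{s_0}=(a_{s_0},b_{s_0})$; this yields the constant $C_\eta$ of Proposition~\ref{transcontinuity} attached to $\pi_{s_0}$. By uniform continuity of $H$ on the compact square $[0,1]^2$, there is a neighborhood $V$ of $s_0$ such that $\|\pi_s-\pi_{s_0}\|_\infty\le \min(\eta,c/C_\eta)$ for every $s\in V$. Since each $\pi_s$ has the same initial point as $\pi_{s_0}$, Proposition~\ref{transcontinuity} applies and gives
\[
\left\|\Phi_X(s)-\Phi_X(s_0)\right\|_\infty\le \|\pi_s-\pi_{s_0}\|_\infty\cdot C_\eta\le c.
\]
Because $f\in\mathcal{F}_{U,c}$ forces any two distinct points of $\dgm\left(f^*_{\pi(1)}\right)\setminus\{\Delta\}$ to be at sup-distance strictly greater than $2c$ (and at distance greater than $2c$ from $\Delta$), the two values $\Phi_X(s)$ and $\Phi_X(s_0)$ must coincide. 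Hence $\Phi_X$ is constant on $V$; since $[0,1]$ is connected, local constancy propagates to constancy on all of $[0,1]$.

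The one technical worry is ensuring that the pair $(\eta,C_\eta)$ coming from Proposition~\ref{transcontinuity} can be used uniformly in a neighborhood of $s_0$, rather than having to be chosen afresh for each comparison $\pi_{s_0}$ vs.~$\pi_s$. This is not a serious obstacle: by continuity of $H$ and compactness of $[0,1]^2$, possibly shrinking $V$ keeps the quantities $\|(\bar a,1-\bar a)\|_\infty$, $\|(\bar b,-\bar b)\|_\infty$ and the relevant $\min(\bar a,1-\bar a)$ under uniform control along every nearby $\pi_s$, so a single pair $(\eta,C_\eta)$ suffices throughout $V$. With that bookkeeping in place, the proof reduces to the familiar principle that a continuous map from a connected domain into a discrete fiber over the endpoint is constant.
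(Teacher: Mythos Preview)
Your proof is correct and follows exactly the approach the paper sketches in one line: continuity of the transport in the path variable (Proposition~\ref{transcontinuity}) combined with the discreteness of $\dgm\left(f^*_{\pi(1)}\right)\setminus\{\Delta\}$ forces $s\mapsto T^f_{\pi_s}(X)$ to be locally constant, hence constant on the connected interval $[0,1]$. Your ``technical worry'' is in fact a non-issue: Proposition~\ref{transcontinuity} fixes the reference path $\bar\pi=\pi_{s_0}$ and produces a single pair $(\eta,C_\eta)$ depending only on $\pi_{s_0}$, which then controls \emph{every} nearby $\pi_s$ at once---no uniformity over varying base paths is needed for local constancy at $s_0$.
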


\begin{corollary}\label{loops}
The map $T^f$ taking each equivalence class $[\pi]$ to the permutation $T^f_{\pi}$ is a well-defined homomorphism from the fundamental group of $U$ at $(a,b)\in U$ to the group of permutations of $\dgm\left(f_{(a,b)}^*\right)$.
\end{corollary}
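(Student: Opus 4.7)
The plan is to assemble the corollary from three ingredients already available: Proposition~\ref{uniquePath} (which makes $T^f_\pi$ well-defined as a bijection), Proposition~\ref{homotopy} (which kills the dependence on the representative of $[\pi]$), and Proposition~\ref{composition} (which gives the multiplicativity).

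First I would specialize the construction in Subsection~\ref{deftm} to the case where $\pi$ is a loop at $(a,b)$. Since $\pi(0)=\pi(1)=(a,b)$, the bijection $T^f_\pi:\dgm\left(f_{\pi(0)}^*\right)\to\dgm\left(f_{\pi(1)}^*\right)$ becomes a bijection of $\dgm\left(f_{(a,b)}^*\right)$ to itself. By the final observation of Subsection~\ref{deftm}, $T^f_\pi$ fixes $\Delta$ and restricts to a bijection of $\dgm\left(f_{(a,b)}^*\right)\setminus\{\Delta\}$; since that set is finite (Corollary~\ref{corfiniteness}), $T^f_\pi$ is an element of the permutation group of $\dgm\left(f_{(a,b)}^*\right)$.

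Next I would address well-definedness on homotopy classes. If $\pi,\pi'$ are two loops at $(a,b)$ in $U$ with $[\pi]=[\pi']$ in $\pi_1(U,(a,b))$, then $\pi$ and $\pi'$ are homotopic relative to their common endpoints, so Proposition~\ref{homotopy} yields $T^f_\pi=T^f_{\pi'}$. Hence the assignment $[\pi]\mapsto T^f_\pi$ defines a function $T^f:\pi_1(U,(a,b))\to\mathrm{Sym}\left(\dgm\left(f_{(a,b)}^*\right)\right)$. Finally, to check that $T^f$ is a group homomorphism, I would take any two loops $\pi_1,\pi_2$ based at $(a,b)$; their concatenation $\pi_1\ast\pi_2$ is again a loop at $(a,b)$, and Proposition~\ref{composition} gives
\[
T^f_{\pi_1\ast\pi_2}=T^f_{\pi_2}\circ T^f_{\pi_1},
\]
which is exactly the homomorphism property once one fixes the usual convention that the group operation on $\pi_1(U,(a,b))$ is path concatenation and that on $\mathrm{Sym}\left(\dgm\left(f_{(a,b)}^*\right)\right)$ is composition (read right-to-left).

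There is essentially no obstacle here: the real analytical content — continuity of transport and its invariance under homotopy — has already been extracted in Propositions~\ref{uniquePath}, \ref{transcontinuity}, and~\ref{homotopy}. The only thing worth double-checking is that nothing escapes to or emerges from $\Delta$ along a loop based in $U$, but this is guaranteed by the assumption $f\in\mathcal{F}_{U,c}$ together with Proposition~\ref{uniquePath}, which ensures that the induced path $P$ lies entirely in $\Delta^\ast$ whenever $P(0)\neq\Delta$, and entirely in $\{\Delta\}$ otherwise. Hence $T^f_\pi$ cannot mix $\Delta$ with proper or improper cornerpoints, and the permutation structure is preserved.
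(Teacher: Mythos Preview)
Your proposal is correct and follows the same approach as the paper, which simply notes that the corollary follows from Propositions~\ref{composition} and~\ref{homotopy}. You have merely unpacked the details (bijectivity via Subsection~\ref{deftm}, finiteness via Corollary~\ref{corfiniteness}, and the $\Delta$-preservation via Proposition~\ref{uniquePath}) that the paper leaves implicit.
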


\begin{proof}
This follows from Propositions~\ref{composition} and~\ref{homotopy}.
\end{proof}

\begin{definition}
The image of the group homomorphism $T^f$ will be called the \emph{persistent monodromy group} of the filtering function $f$ with respect to $U$.
\end{definition}

In the introduction of this paper we recalled a basic example of filtering function $f:X=\R^2\to\R^2$ associated with a nontrivial persistent monodromy group. We observe that it is easy to adapt that example and obtain a normal filtering function $\hat f:M\to\R^2$ still associated with a nontrivial persistent monodromy group with respect to a suitable open set $U$, where $M$ is a smooth closed manifold.

\begin{proposition}\label{generators}
If the set $\{[\pi_j]\}_{j\in J}$ of homotopy classes of loops based at a point $(a,b)\in U$ is a set of generators for the fundamental group of $U$ at $(a,b)$, then the persistent monodromy group of $f$ with respect to $U$ is generated by the permutations $T^f_{\pi_j}$.
\end{proposition}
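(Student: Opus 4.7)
The statement is essentially an immediate consequence of the fact that $T^f$ is already known to be a group homomorphism, combined with the standard algebraic fact that under a group homomorphism, a generating set of the domain maps to a generating set of the image. So the plan is simply to assemble these ingredients and verify that they apply in the present setting.

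More concretely, my first step would be to recall, via Corollary~\ref{loops}, that $T^f$ is a well-defined homomorphism from $\pi_1(U,(a,b))$ into the group of permutations of $\dgm\left(f_{(a,b)}^*\right)$, and that by definition the persistent monodromy group of $f$ with respect to $U$ is precisely the image $T^f\bigl(\pi_1(U,(a,b))\bigr)$. Next, given an arbitrary element $\tau$ of this image, I would pick a loop $\pi$ based at $(a,b)$ with $T^f_\pi=\tau$ and, using the hypothesis that $\{[\pi_j]\}_{j\in J}$ generates $\pi_1(U,(a,b))$, write
\[
[\pi]\;=\;[\pi_{j_1}]^{\varepsilon_1}\cdot[\pi_{j_2}]^{\varepsilon_2}\cdots [\pi_{j_n}]^{\varepsilon_n}
\]
for suitable indices $j_1,\ldots,j_n\in J$ and signs $\varepsilon_i\in\{-1,+1\}$.

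Finally, I would apply the homomorphism $T^f$ to both sides of this factorization. Proposition~\ref{composition} guarantees that transport respects composition of paths, and the identity $T^f_{\pi^{-1}}=\bigl(T^f_\pi\bigr)^{-1}$ (which follows from the discussion in Subsection~\ref{deftm}, where $T^f_\pi$ is noted to be a bijection with inverse $T^f_{\pi^{-1}}$) ensures that it also respects inversion. Therefore
\[
\tau\;=\;T^f_\pi\;=\;\bigl(T^f_{\pi_{j_1}}\bigr)^{\varepsilon_1}\bigl(T^f_{\pi_{j_2}}\bigr)^{\varepsilon_2}\cdots \bigl(T^f_{\pi_{j_n}}\bigr)^{\varepsilon_n},
\]
exhibiting $\tau$ as a word in the $T^f_{\pi_j}$ and their inverses. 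Since $\tau$ was arbitrary, this proves the claim. There is really no obstacle here, since all the nontrivial work, namely showing that $T^f$ is well-defined on homotopy classes and respects composition, has already been done in Propositions~\ref{composition} and~\ref{homotopy} and Corollary~\ref{loops}; the present statement is just the routine translation of ``generators map to generators of the image'' for the homomorphism $T^f$.
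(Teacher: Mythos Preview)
Your proposal is correct and takes essentially the same approach as the paper, which simply writes ``This follows from Corollary~\ref{loops}.'' You have just spelled out in detail the standard fact that the image of a generating set under a group homomorphism generates the image subgroup, which is exactly what the paper leaves implicit.
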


\begin{proof}
This follows from Corollary~\ref{loops}.
\end{proof}

The following interesting property holds.

\begin{proposition}\label{squareloop}
Let $\bar\pi:[0,1]\to U$ be a loop turning once around exactly one singular pair $\left(\bar a,\bar b\right)$ for $f$. Then $T^f_{\bar\pi}$ is either a transposition or the identity.
\end{proposition}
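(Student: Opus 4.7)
The plan is to exploit the covering-space intuition behind the monodromy action: the correspondence $(a,b)\mapsto \dgm(f^*_{(a,b)})$ defines a local covering of the regular locus, and turning once around a singular pair can only scramble those sheets that collide at that pair. First I would invoke Proposition~\ref{homotopy} to replace $\bar\pi$ by an arbitrarily small circle around $(\bar a,\bar b)$ that still winds once. For a proper singular pair this is legitimate because Corollary~\ref{corsp}(1) guarantees that such pairs are isolated in $\mathrm{Sing}(f)$; for an improper singular pair one can take a small loop transverse to the singular segment at $(\bar a,\bar b)$.

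Next, by the Stability Theorem~\ref{stability} together with Proposition~\ref{uniquePath}, for every $X\in\dgm(f^*_{\bar\pi(0)})\setminus\{\Delta\}$ the transported path $\tau\mapsto T^f_{\bar\pi_\tau}(X)$ stays within an arbitrarily small neighborhood of some limit point $Y\in \dgm(f^*_{(\bar a,\bar b)})$ as the radius of the loop shrinks. Corollaries~\ref{cordpp} and~\ref{cordip} tell us that $\dgm(f^*_{(\bar a,\bar b)})$ contains at most one proper multiple point (of multiplicity $2$) and at most two improper multiple points (each of multiplicity $2$). The hypothesis that $(\bar a,\bar b)$ is the unique singular pair encircled by $\bar\pi$ means that exactly one of these multiple points is responsible for the nontrivial monodromy, so at most two elements of $\dgm(f^*_{\bar\pi(0)})$ can be non-fixed by $T^f_{\bar\pi}$.

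For a simple limit point $Y$ (multiplicity $1$), I would argue that in a small neighborhood of $Y$ the diagram contains exactly one point for each $(a,b)$ near $(\bar a,\bar b)$, thanks to Corollary~\ref{corfiniteness} and the Stability Theorem; this yields a single-valued continuous section, so $T^f_{\bar\pi}(X)=X$. For the two points $X_1,X_2 \in \dgm(f^*_{\bar\pi(0)})$ whose common limit is the relevant double point $D$, the local picture of $\dgm(f^*_{(a,b)})$ near $D$ is a $2$-fold cover of a punctured neighborhood of $(\bar a,\bar b)$; going once around either trivially returns $X_1,X_2$ to themselves or swaps them. Combining these observations, $T^f_{\bar\pi}$ fixes every element of $\dgm(f^*_{\bar\pi(0)})$ outside $\{X_1,X_2\}$ and restricts to the identity or the transposition on $\{X_1,X_2\}$, proving the claim.

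The principal obstacle I expect is making rigorous the local-covering description near the double point, in particular showing that in a suitably small punctured neighborhood of $(\bar a,\bar b)$ the two points of $\dgm(f^*_{(a,b)})$ approaching $D$ form either a connected or a disconnected $2$-fold cover. Here I would lean on the pairing of contour-arcs with respect to $r_{(\bar a,\bar b)}$ introduced in Subsection~\ref{pairing} together with the Position Theorem~\ref{GT*}: these express $X_1$ and $X_2$ in terms of intersections of $r_{(a,b)}$ with two pairs of contour-arcs of $\Gamma(f)$ near the colliding point of $\Gamma(f)$, reducing the monodromy question to a local combinatorial analysis of how these intersections exchange as $(a,b)$ traverses the small loop around $(\bar a,\bar b)$.
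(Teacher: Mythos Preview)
Your overall strategy---shrink the loop to a small circle around $(\bar a,\bar b)$, observe that only two diagram points can collide there, and conclude the monodromy is at most a transposition---is exactly the paper's strategy. Two points deserve correction, however.

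First, your treatment of the improper case is a red herring. The loop $\bar\pi$ lies in $U\subseteq\mathrm{Reg}(f)$. If $(\bar a,\bar b)$ carried an improper multiple point, Corollary~\ref{corsp}(2) would place it on a segment of singular pairs running from $a=0$ to $a=1$; any loop with nonzero winding number about $(\bar a,\bar b)$ would then have to cross that segment and hence leave $\mathrm{Reg}(f)$, contradicting $\bar\pi([0,1])\subseteq U$. So the hypothesis already forces the singular pair to be of the proper (isolated) type, and there is no ``transverse small loop'' case to handle. The paper makes this observation its first step.

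Second, there is a genuine technical gap in your shrinking step. You invoke Proposition~\ref{homotopy} to replace $\bar\pi$ by an arbitrarily small circle about $(\bar a,\bar b)$, but that proposition requires the homotopy to stay in $U$, and nothing guarantees that $U$ contains a full punctured neighbourhood of $(\bar a,\bar b)$ (recall $(\bar a,\bar b)\notin U$, and $U$ could, for instance, approach $(\bar a,\bar b)$ only along a thin finger). The paper resolves this by passing to a larger connected open set $U_{r'/2}\supseteq U$ built from the whole regular component $V$ with small balls around the singular pairs removed; one then chooses $c'>0$ with $f\in\mathcal{F}_{U_{r'/2},c'}$ so that the transport machinery (Propositions~\ref{uniquePath} and~\ref{transcontinuity}) is available there. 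In $U_{r'/2}$ one writes $\bar\pi\simeq\alpha\ast\beta\ast\alpha^{-1}$ with $\beta$ a small loop in $B((\bar a,\bar b),r')\cap U_{r'/2}$, so that $T^f_{\bar\pi}=(T^f_{\alpha})^{-1}\circ T^f_{\beta}\circ T^f_{\alpha}$, and then compares $T^f_\beta$ with the transport along the constant path via Proposition~\ref{transcontinuity} to force $T^f_\beta(X)$ arbitrarily close to $X$. This is precisely the rigorous substitute for your covering-space heuristic, and without it your appeal to ``single-valued continuous sections near simple limit points'' is not justified, since those sections live over a neighbourhood of $(\bar a,\bar b)$ that need not lie in $U$.

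Once these two points are addressed your argument coincides with the paper's; the local combinatorial analysis of contour-arc intersections you anticipate as the ``principal obstacle'' is in fact not needed, because Proposition~\ref{transcontinuity} plus Corollary~\ref{cordpp} already pin down $T^f_\beta$ as the identity or the swap of the two nearby proper points.
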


\begin{proof}
Since the image of $\bar\pi$ cannot contain singular pairs for $f$, Statement~(2) in Corollary~\ref{corsp}
implies that $\dgm\left(f^*_{\left(\bar a,\bar b\right)}\right)$ does not contain improper multiple points. From Statement~(1) in Corollary~\ref{corsp} it follows that $\left(\bar a,\bar b\right)$ is an isolated singular pair, and hence $B\left(\left(\bar a,\bar b\right),r\right)\cap \mathrm{Sing}(f)=\{\left(\bar a,\bar b\right)\}$ for every sufficiently small $r$, where $B(P,r)$ is the open ball of center $P$ and radius $r$ with respect to the Euclidean distance.
Let $V$ be the connected component of $\mathrm{Reg}(f)$ containing $U$.
Statement~(2) in Corollary~\ref{corsp} guarantees that the boundary of $V$ in ${\mathcal {P}}(\Lambda^+)$ is the union of a finite set and a (possibly empty) finite union of segments, whose points are singular pairs for $f$.
For every $r>0$, let us set $U_r:=V\setminus \bigcup_{P\in \mathrm{Sing}(f)} \overline{B(P,r)}$. Let us consider two values $r',r''$ with $0<r'< r''$, and the sets $U_{r'/2}$, $U_{r'}$, $U_{r''}$.
By definition of the set $\mathcal{F}_{U,c}$, we have that $\partial U$ does not contain any point of $\mathrm{Sing}(f)$. Therefore, we can choose $r',r''$ so small that $U\subseteq U_{r''}\subseteq U_{r'}\subseteq U_{r'/2}$ and the open sets $U_{r'/2},U_{r'},U_{r''}$ are connected.
Let us choose a $c'>0$ such that $f\in \mathcal{F}_{U_{r'/2},c'}$.
With reference to Proposition~\ref{transcontinuity}, let us choose a positive $\eta<\min_{t\in [0,1]}\min (a(t),1-a(t))$
for every path $\pi=(a,b):[0,1]\to B\left(\left(\bar a,\bar b\right),r''\right)\cap U_{r'/2}$. This value does not depend on $r'$.
Now, let us take a continuous path $\alpha:[0,1]\to U_{r'/2}$ such that $\alpha(0)=\bar\pi(0)$ and $\alpha(1)\in B\left(\left(\bar a,\bar b\right),r'\right)$. We also take a loop $\beta:[0,1]\to B\left(\left(\bar a,\bar b\right),r'\right)\cap U_{r'/2}$ such that $\beta(0)=\beta(1)=\alpha(1)$ and the loop $\gamma:=\alpha\ast\beta\ast\alpha^{-1}$ is homotopic to $\bar \pi$ in $U_{r'/2}$
(see Figure~\ref{alphaandbeta}). This loop $\beta$ exists because the open set bounded by the image of $\bar \pi$ does not contain singular pairs different from $\left(\bar a,\bar b\right)$.
Propositions~\ref{homotopy} and~\ref{composition} imply that $T^f_{\bar\pi}= T^f_{\gamma}
=\left({T^f_{\alpha}}\right)^{-1}\circ T^f_{\beta}\circ{T^f_{\alpha}}$.
Proposition~\ref{transcontinuity}
guarantees the existence of a constant $k$ such that if $r'$ is small enough, then for every loop $\beta':[0,1]\to B\left(\left(\bar a,\bar b\right),r'\right)\cap U_{r'/2}$
with $\beta'(0)=\beta(0)$
the value $\left\|T^f_{\beta}(X)-T^f_{\beta'}(X)\right\|_\infty$
is not greater than $kr'$, for every $X\in \dgm\left(f_{\left(\bar a,\bar b\right)}^*\right)\setminus\{\Delta\}$. We observe that $k$ does not depend on $r'$ and
Proposition~\ref{transcontinuity} does not require that the loops $\beta$ and $\beta'$ be homotopic in $U_{r'/2}$.
If we take $\beta'$
equal to the constant path having $\beta(0)$ as its image,
it follows that $\left\|T^f_{\beta}(X)-X\right\|_\infty=\left\|T^f_{\beta}(X)-T^f_{\beta'}(X)\right\|_\infty$ can be made arbitrarily small for every $X\in \dgm\left(f_{\beta(0)}^*\right)\setminus \{\Delta\}$, provided that $r'$ is small enough.
Since Corollary~\ref{cordpp} and the Stability Theorem~\ref{stability} imply that for $(a',b')\in B\left(\left(\bar a,\bar b\right),r'\right)\cap U_{r'/2}$ and $r'$ small enough the set $\dgm\left(f_{(a',b')}^*\right)\setminus \{\Delta\}$ contains only two proper points $P_{(a',b')}^1,P_{(a',b')}^2$ that are close to each other,
the fact that $T^f_{\beta}(X)$ must be close to $X$
guarantees that $T^f_{\beta}:\dgm\left(f_{\beta(0)}^*\right)\to \dgm\left(f_{\beta(0)}^*\right)$ is either the identity or the transposition exchanging $P_{\beta(0)}^1$ with $P_{\beta(0)}^2$.
Therefore, given that $T^f_{\alpha}:\dgm\left(f_{\alpha(0)}^*\right)\to \dgm\left(f_{\alpha(1)}^*\right)$ is a bijection, $T^f_{\bar\pi}=\left({T^f_{\alpha}}\right)^{-1}\circ T^f_{\beta}\circ{T^f_{\alpha}}$ is either the identity or a transposition.
\end{proof}

\begin{remark}\label{remgenerators}
Let $\left(\bar a,\bar b\right)$ be a regular point for $f$.
Let $\mathrm{Imp}(f)$ be the set of points $(a,b)\in{\mathcal {P}}(\Lambda^+)$ such that the persistence diagram $\dgm\left(f_{(a,b)}^*\right)$ contains at least one improper multiple point.
Let us consider the connected component $\widetilde{V}$ of $\left(\bar a,\bar b\right)$ in ${\mathcal {P}}(\Lambda^+)\setminus \mathrm{Imp}(f)$ and assume that $\widetilde{V}\cap \mathrm{Sing}(f)\neq \emptyset$. Corollary~\ref{corsp} implies that $\widetilde{V}\cap \mathrm{Sing}(f)$ is a finite set $\{(a_1,b_1),\ldots,(a_q,b_q)\}$. Let us take $r$ so small that the closed balls $\overline{B((a_j,b_j),2r)}$ are disjoint from each other and do not meet the boundary of $\widetilde{V}$ in ${\mathcal {P}}(\Lambda^+)$. We also require that $r$ is so small that $\left(\bar a,\bar b\right)\in U:=\widetilde{V}\setminus \bigcup_{P\in \mathrm{Sing}(f)} \overline{B(P,r)}$.
Then in $U$ for every singular pair $(a_j,b_j)$ we can find a loop $\pi_j$ based at $\left(\bar a,\bar b\right)$ that turns once around exactly $(a_j,b_j)$ and no other singular pair $(a_i,b_i)$.
The set of homotopy classes $\{[\pi_1],\ldots,[\pi_q]\}$ is a set of generators for the fundamental group of $U$ at $\left(\bar a,\bar b\right)$, so that
Propositions~\ref{generators} and \ref{squareloop} implicitly give a method to compute the persistent monodromy group of $f$ with respect to $U$. Furthermore, we know that if $G$ is a subgroup of the symmetric group $S_n$ and $G$ is generated by $m$ transpositions, then $|G|\le (m+1)!$. It follows that the cardinality of the image of $T^f$ is bounded by $(q+1)!$.
\end{remark}

\begin{figure}[ht]
\begin{tikzpicture}[scale=0.7]
\coordinate (A) at (-6,-5.5);
\coordinate (A1) at ($(A) + (10,-1)$);
\coordinate (A2) at ($(A1) + (3,8)$);
\coordinate (A3) at ($(A2) + (-7,6)$);
\coordinate (A4) at ($(A3) + (-8,-4)$);

\coordinate (B) at (-1.2,-1.7);
\coordinate (B1) at (1.3,-1.9);
\coordinate (B2) at (2.4,1);
\coordinate (B3) at (0.5,2.6);
\coordinate (B4) at (-1.4,2.0);
\coordinate (B5) at (-2.1,0);

\coordinate (AB) at ($(A)!0.5!(B)$);

\fill [gray!30] (-10,-8) rectangle (10,8);
\draw (0,0) circle (5cm);
\draw (0,0) circle (3cm);
\draw [black,fill=white] (0,0) circle (1.5cm);
\draw (0,0) node {$\bullet$};

\draw (0,5.35) node {{\large $\displaystyle{B\left(\left(\bar a,\bar b\right),r''\right)}$}};
\draw (0,3.35) node {{\large $\displaystyle{B\left(\left(\bar a,\bar b\right),r'\right)}$}};
\draw (0,0.50) node {{\large $\displaystyle{\left(\bar a,\bar b\right)}$}};
\draw (-8.5,7) node {{\Huge $U_{r'/2}$}};

\draw (A) node {$\bullet$};
\draw plot [smooth,tension=0.6] coordinates {(A) (A1) (A2) (A3) (A4) (A)};
\draw (A1) -- ($(A1) + (-0.2,0.07)$);
\draw (A1) -- ($(A1) + (-0.1,-0.2)$);
\draw ($(A1) + (-0.6,-0.5)$) node {{\large $\bar \pi$}};

\draw (B) node {$\bullet$};
\draw (A) .. controls ($(AB) + (0.7,-0.7)$) .. (B);

\coordinate (X) at ($(A) + (2.35,1)$);
\draw (X) -- ($(X) + (-0.2,0.05)$);
\draw (X) -- ($(X) + (-0.05,-0.2)$);
\draw ($(X) + (-0.65,-0.05)$) node {{\large $\alpha$}};

\draw plot [smooth,tension=0.9] coordinates {(B) (B1) (B2) (B3) (B4) (B5) (B)};
\draw (B4) -- ($(B4) + (0,0.25)$);
\draw (B4) -- ($(B4) + (0.25,0)$);
\draw ($(B4) + (0.8,0.3)$) node {{\large $\beta$}};
\end{tikzpicture}
\caption{The path $\alpha:[0,1]\to U_{r'/2}$ and the loop $\beta:[0,1]\to B\left(\left(\bar a,\bar b\right),r'\right)\cap U_{r'/2}$ used in the proof of Proposition~\ref{squareloop}.}
\label{alphaandbeta}
\end{figure}

Before proceeding we recall that the sets $U$ and $\mathcal{F}_{U,c}$ have been chosen, as described at the end of Section~\ref{preparing}.

The next result implies that the transport
along a path in $U$ is continuous with respect to changes in the filtering function.

\begin{proposition}\label{stability_of_transport}
Let $f,g\in \mathcal{F}_{U,c}$ with $\|f-g\|_\infty< c$. If $\pi:[0,1]\to U$ is a continuous path, and $X\in \dgm\left(f_{\pi(0)}^*\right)$, $Y\in \dgm\left(g_{\pi(0)}^*\right)$ are two points whose distance is less than or equal to $\|f-g\|_\infty$, then $\|T^g_\pi(Y)-T^f_\pi(X)\|\le \|f-g\|_\infty$.
\end{proposition}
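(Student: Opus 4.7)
\medskip

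The plan is to fix $\pi:[0,1]\to U$ and the given starting points $X,Y$, define the induced paths $P(\tau):=T^f_{\pi_\tau}(X)$ and $Q(\tau):=T^g_{\pi_\tau}(Y)$ given by Proposition~\ref{uniquePath} (where $\pi_\tau$ denotes the restriction of $\pi$ to $[0,\tau]$), and then prove by a connectedness argument that the set
\[
T:=\bigl\{\tau\in[0,1]\;:\;\|P(\tau)-Q(\tau)\|_\infty\le\|f-g\|_\infty\bigr\}
\]
coincides with $[0,1]$. The case $X=\Delta$ can be dispatched first: since $d(X,\Delta)>2c>\|f-g\|_\infty$ would contradict $d(X,Y)\le\|f-g\|_\infty$, one forces $Y=\Delta$, and then $P,Q$ are constantly equal to $\Delta$. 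So I may assume $X,Y\neq\Delta$, whence by Proposition~\ref{uniquePath} the paths $P,Q$ avoid $\Delta$ altogether, and the condition $\|P(\tau)-\Delta\|_\infty,\|Q(\tau)-\Delta\|_\infty>2c$ holds throughout. A routine check using the form of the extended metric $d$ shows that under these separation conditions, any bound $d(\cdot,\cdot)<c$ between two such points collapses to the sup-norm bound $\|\cdot-\cdot\|_\infty<c$; in particular the hypothesis $d(X,Y)\le\|f-g\|_\infty$ actually gives $\|X-Y\|_\infty\le\|f-g\|_\infty$, so $0\in T$.

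The set $T$ is closed in $[0,1]$ by continuity of $P$ and $Q$. The heart of the argument is to show $T$ is also open, which will give $T=[0,1]$ by connectedness. Fix $\tau_0\in T$. By Lemma~\ref{lemmafab} we have $\|f^*_{\pi(\tau)}-g^*_{\pi(\tau)}\|_\infty\le\|f-g\|_\infty$ for every $\tau$, so by the Stability Theorem~\ref{stability}
\[
d_B\bigl(\dgm(f^*_{\pi(\tau)}),\dgm(g^*_{\pi(\tau)})\bigr)\le\|f-g\|_\infty.
\]
Hence there is an optimal matching $\sigma_\tau$ whose value on $P(\tau)$ is some $Q'(\tau)$ with $d(P(\tau),Q'(\tau))\le\|f-g\|_\infty<c$. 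Since $d(P(\tau),\Delta)>2c$, the triangle-style estimate on the extended metric forces $Q'(\tau)\neq\Delta$, $\|P(\tau)-Q'(\tau)\|_\infty\le\|f-g\|_\infty$, and in addition $d(Q'(\tau),\Delta)>c$.

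The key step is now to argue that $Q'(\tau)=Q(\tau)$ for all $\tau$ sufficiently close to $\tau_0$. By continuity of $P$ and $Q$, for such $\tau$ we have $\|P(\tau)-P(\tau_0)\|_\infty<\varepsilon$ and $\|Q(\tau)-Q(\tau_0)\|_\infty<\varepsilon$ for an arbitrarily small $\varepsilon>0$, whence
\[
\|Q'(\tau)-Q(\tau)\|_\infty\le\|Q'(\tau)-P(\tau)\|_\infty+\|P(\tau)-P(\tau_0)\|_\infty+\|P(\tau_0)-Q(\tau_0)\|_\infty+\|Q(\tau_0)-Q(\tau)\|_\infty\le 2\|f-g\|_\infty+2\varepsilon.
\]
Since $\|f-g\|_\infty<c$, for $\varepsilon$ small this bound is strictly less than $2c$. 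But $g\in\mathcal{F}_{U,c}$ forces any two distinct points of $\dgm(g^*_{\pi(\tau)})\setminus\{\Delta\}$ to have sup-norm distance strictly greater than $2c$ (again using the collapse of $d$ to $\|\cdot\|_\infty$ for points far from $\Delta$). Therefore $Q'(\tau)=Q(\tau)$, and consequently $\|P(\tau)-Q(\tau)\|_\infty\le\|f-g\|_\infty$, i.e.\ $\tau\in T$. This proves that $T$ is open, hence $T=[0,1]$, and evaluating at $\tau=1$ yields the claim.

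The main obstacle I expect is precisely the passage from the extended metric $d$ used in the bottleneck distance to the sup-norm $\|\cdot\|_\infty$ used in the statement, and then verifying that the $2c$-separation built into $\mathcal{F}_{U,c}$ strictly dominates the $2\|f-g\|_\infty+2\varepsilon$ bound. The hypothesis $\|f-g\|_\infty<c$ (as opposed to $\le c$) is exactly what provides the margin needed to close this gap.
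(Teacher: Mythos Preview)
Your proof is correct and follows essentially the same approach as the paper: both use a connectedness argument on $[0,1]$, combining the Stability Theorem with the $2c$-separation built into $\mathcal{F}_{U,c}$ to show that the ``nearest-point-in-$\dgm(g^*_{\pi(\tau)})$'' tracking of $P(\tau)$ must agree with the transport $Q(\tau)$ throughout. The paper phrases this as ``$\max S=1$'' where $S$ is the set of times at which the two agree, while you phrase it as ``$T$ is closed and open''; these are standard equivalent idioms for the same argument, and your treatment of the $d$ versus $\|\cdot\|_\infty$ collapse is slightly more explicit than the paper's.
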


\begin{proof}\label{proof_stability_of_transport}
We know that
$\left\|f_{(a,b)}^*- g_{(a,b)}^*\right\|_\infty\le \|f-g\|_\infty$ for every $(a,b)\in {\mathcal {P}}(\Lambda^+)$
(Lemma~\ref{lemmafab}).
For every $t\in [0,1]$, the Stability Theorem~\ref{stability} implies that
for each point $X_t\in \dgm\left(f_{\pi(t)}^*\right)$ there is a unique point $Y_t(X_t)\in \dgm\left(g_{\pi(t)}^*\right)$ having distance from $X_t$ less than or equal to $\|f-g\|_\infty< c$. We observe that if this point were not unique, two points of $\dgm\left(g_{\pi(t)}^*\right)$ would exist, with a distance from each other less than $2c$, against the definition of the set $\mathcal{F}_{U,c}$.
Obviously, $Y_0(X)=Y$.

For every $\tau\in [0,1]$, let $\pi_\tau:[0,1]\to U$ be the path defined by setting $\pi_\tau(t)=\pi(\tau t)$. Let us consider the set $S$ of the values
$\tau$ such that $T^g_{\pi_\tau}(Y_0(X))= Y_\tau(T^f_{\pi_\tau}(X))$.
Since $T^g_{\pi_0}$ and $T^f_{\pi_0}$ are identity maps, we observe that $T^g_{\pi_0}(Y_0(X))=Y_0(X)=Y_0(T^f_{\pi_0}(X))$, so that $0\in S$.
We can consider the number $s:=\max S$.
If $s <1$, we can find an arbitrarily small $\delta>0$ such that the inequality $T^g_{\pi_{s +\delta}}(Y_0(X))\neq Y_{s +\delta}(T^f_{\pi_{s +\delta}}(X))$ holds. Since
$\|T^g_{\pi_{s }}(Y_0(X))-T^f_{\pi_{s }}(X)\|=\|Y_s (T^f_{\pi_{s }}(X))-T^f_{\pi_{s }}(X)\|\le \|f-g\|_\infty< c$,
 we have that
$\|T^g_{\pi_{s +\delta}}(Y_0(X))-T^f_{\pi_{s +\delta}}(X)\|< c$, provided that $\delta$ is small enough. Moreover, we have that $\|Y_{s +\delta}(T^f_{\pi_{s +\delta}}(X))-T^f_{\pi_{s +\delta}}(X)\|\le \|f-g\|_\infty< c$. It follows that
$\|Y_{s +\delta}(T^f_{\pi_{s +\delta}}(X))-T^g_{\pi_{s +\delta}}(Y_0(X))\|_\infty< 2c$ with
 $T^g_{\pi_{s +\delta}}(Y_0(X))\neq Y_{s +\delta}(T^f_{\pi_{s +\delta}}(X))$ and
$T^g_{\pi_{s +\delta}}(Y_0(X)),Y_{s +\delta}(T^f_{\pi_{s +\delta}}(X))\in \dgm\left(g_{\pi_{s +\delta}(1)}^*\right)$. This fact contradicts our assumption that
$g\in \mathcal{F}_{U,c}$. Therefore $s =1$, so that
$\|T^g_{\pi}(Y)-T^f_{\pi}(X)\|=\|T^g_{\pi}(Y_0(X))-T^f_{\pi}(X)\|=\|Y_{1}(T^f_{\pi}(X))-T^f_{\pi}(X)\|\le \|f-g\|_\infty$.
\end{proof}

We conclude this section by observing that the transport operator $T^f_{\pi}$ cannot exchange the positions of improper points, under the assumptions $f\in \mathcal{F}_{U,c}$ and $\pi:[0,1]\to U$. This is due to the fact that $T^f_{\pi}$ moves the points at infinity along a line, so that in order to exchange their positions those points should collide. As a consequence, the path $\pi$ should meet the set $\mathrm{Sing}(f)$, against our assumptions. In plain words, we could say that the phenomenon of monodromy concerns only the proper points of persistence diagrams.


\subsection{The definition of the coherent matching distance}
\label{secdefCMD}

\begin{definition}
\label{cohcost}
Let $\Pi_{(a,b)}(U)$ be the set of all continuous paths $\pi:[0,1]\to U$ with $\pi(0)=(a,b)$.
If $\sigma_{(a, b)}\in\Sigma_{(a,b)}^{(f,g)}$, the \emph{coherent cost} of $\sigma_{(a, b)}$ is the value
$${\mathrm{cohcost}_U}\left(\sigma_{(a,b)}\right):=\sup_{\pi\in \Pi_{(a,b)}(U)}\mathrm{cost}\left(T^{(f,g)}_{\pi}\left(\sigma_{(a,b)}\right)\right).$$
\end{definition}

The following proposition states that the function ${\mathrm{cohcost}_U}$ is invariant under transport.

\begin{proposition}
\label{Cohinvariance}
Let $\sigma_{(a, b)}:\dgm\left(f_{(a,b)}^*\right)\to\dgm\left(g_{(a,b)}^*\right)$ be a matching, with $(a,b)\in U$.
If $\pi':[0,1]\to U$ is a continuous path with $\pi'(0)=(a,b)$, then
${\mathrm{cohcost}_U}\left(T^{(f,g)}_{\pi'}\left(\sigma_{(a,b)}\right)\right)={\mathrm{cohcost}_U}\left(\sigma_{(a,b)}\right)$.
\end{proposition}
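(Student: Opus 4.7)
The plan is to show the two inequalities $\mathrm{cohcost}_U(\sigma') \le \mathrm{cohcost}_U(\sigma_{(a,b)})$ and $\mathrm{cohcost}_U(\sigma_{(a,b)}) \le \mathrm{cohcost}_U(\sigma')$, where $\sigma':=T^{(f,g)}_{\pi'}(\sigma_{(a,b)})$ lives at $\pi'(1)$. The tool for both directions is Proposition~\ref{composition} (functoriality of transport under concatenation), together with Proposition~\ref{homotopy} to handle the cancellation of $\pi'$ against $\pi'^{-1}$.

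For the first inequality, I would take an arbitrary path $\pi''\in\Pi_{\pi'(1)}(U)$, form the concatenation $\pi'\ast\pi''\in\Pi_{(a,b)}(U)$, and apply Proposition~\ref{composition} to obtain $T^{(f,g)}_{\pi''}(\sigma')=T^{(f,g)}_{\pi'\ast\pi''}(\sigma_{(a,b)})$. Taking the supremum over $\pi''$ and noting that only a subset of $\Pi_{(a,b)}(U)$ is parameterized this way still produces costs bounded by $\mathrm{cohcost}_U(\sigma_{(a,b)})$, which yields the desired inequality.

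For the reverse, given $\pi\in\Pi_{(a,b)}(U)$, I would form the path $\pi'^{-1}\ast\pi\in\Pi_{\pi'(1)}(U)$ (well-defined because $\pi'^{-1}$ runs from $\pi'(1)$ to $(a,b)=\pi(0)$ inside $U$). The key identity to establish is $T^{(f,g)}_{\pi'^{-1}\ast\pi}(\sigma')=T^{(f,g)}_{\pi}(\sigma_{(a,b)})$. Using Proposition~\ref{composition} twice, the left-hand side equals $T^{(f,g)}_{\pi}\circ T^{(f,g)}_{\pi'^{-1}}\circ T^{(f,g)}_{\pi'}(\sigma_{(a,b)})$, so what is needed is that $T^{(f,g)}_{\pi'^{-1}}\circ T^{(f,g)}_{\pi'}$ is the identity, equivalently that $T^{(f,g)}_{\pi'\ast\pi'^{-1}}$ is the identity. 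The loop $\pi'\ast\pi'^{-1}$ is homotopic (rel endpoints) to the constant loop at $(a,b)$ via the standard homotopy $H(s,t)=\pi'(s\cdot\min\{2t,2-2t\})$, whose image lies entirely in $\pi'([0,1])\subseteq U$; Proposition~\ref{homotopy} (applied separately to $f$ and $g$) then gives $T^{(f,g)}_{\pi'\ast\pi'^{-1}}=\mathrm{id}$. Taking the supremum over $\pi$ yields $\mathrm{cohcost}_U(\sigma_{(a,b)})\le\mathrm{cohcost}_U(\sigma')$.

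There is no real obstacle here; the only point requiring a little care is checking that the rewriting of $T^{(f,g)}_{\pi}$ as $T^{(f,g)}_{\pi'^{-1}\ast\pi}\circ T^{(f,g)}_{\pi'}$ is legitimate, which reduces to the homotopy $\pi'\ast\pi'^{-1}\simeq\mathrm{const}_{(a,b)}$ staying inside $U$, as observed. Combining the two inequalities gives the equality asserted by Proposition~\ref{Cohinvariance}.
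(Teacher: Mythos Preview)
Your argument is correct and is essentially the paper's proof unpacked into two inequalities; the paper presents the same reasoning as a single chain of equalities, with the passage from $\sup_{\pi\in\Pi_{\pi'(1)}(U)}$ to $\sup_{\pi\in\Pi_{(a,b)}(U)}$ implicitly relying on exactly the concatenation-and-inverse argument you spell out. One minor simplification: the identity $T^{(f,g)}_{\pi'^{-1}}\circ T^{(f,g)}_{\pi'}=\mathrm{id}$ follows directly from the observation in Section~\ref{deftm} that $\left(T^f_\pi\right)^{-1}=T^f_{\pi^{-1}}$, so you need not invoke Proposition~\ref{homotopy} for this step.
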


\begin{proof}
By recalling Proposition~\ref{composition} we have
\begin{align*}
{\mathrm{cohcost}_U}\left(T^{(f,g)}_{\pi'}\left(\sigma_{(a,b)}\right)\right) &=
\sup_{\pi\in \Pi_{\pi'(1)}(U)}\mathrm{cost}
\left(
T^{(f,g)}_{\pi}\left(T^{(f,g)}_{\pi'}\left(\sigma_{(a,b)}\right)\right)
\right)\\
&= \sup_{\pi\in \Pi_{\pi'(1)}(U)}\mathrm{cost}
\left(
T^{(f,g)}_{\pi'\ast\pi}\left(\sigma_{(a,b)}\right)
\right)\\
&= \sup_{\pi\in \Pi_{(a,b)}(U)}\mathrm{cost}
\left(
T^{(f,g)}_{\pi}\left(\sigma_{(a,b)}\right)
\right)\\
&= {\mathrm{cohcost}_U}\left(\sigma_{(a,b)}\right).
\end{align*}
\end{proof}

The set $\Sigma_{(a, b)}^{(f,g)}$ is finite because of Corollary~\ref{corfiniteness}. Therefore, we can give the following definition.

\begin{definition}
\label{CD}
Let $(a,b)\in U$.
The \emph{coherent 2-dimensional matching distance} between $f$ and $g$ is defined as
\[
CD_U(f,g) = \min_{\sigma_{(a, b)}\in\Sigma_{(a, b)}^{(f,g)}} {\mathrm{cohcost}_U}\left(\sigma_{(a,b)}\right).
\]
\end{definition}

\begin{proposition}\label{prop_CDU_dnd}
$CD_U(f,g)$ does not depend on the basepoint $(a,b)$.
\end{proposition}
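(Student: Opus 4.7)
The plan is to deduce the statement directly from Proposition~\ref{Cohinvariance} together with the path-connectedness of $U$ and the bijectivity of the transport operator on the set of matchings. Let $(a,b),(a',b')\in U$ be two basepoints. Since $U$ is a connected open subset of the parameter space ${\mathcal {P}}(\Lambda^+)\subseteq\R^2$, it is path-connected, so we can choose a continuous path $\pi':[0,1]\to U$ with $\pi'(0)=(a,b)$ and $\pi'(1)=(a',b')$.

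Next, I would observe that the transport operator $T^{(f,g)}_{\pi'}\colon\Sigma_{(a,b)}^{(f,g)}\to\Sigma_{(a',b')}^{(f,g)}$ is a bijection. Indeed, by the definition in Section~\ref{deftm}, $T^{(f,g)}_{\pi'}(\sigma):=T^g_{\pi'}\circ\sigma\circ\bigl(T^f_{\pi'}\bigr)^{-1}$, and both $T^f_{\pi'}$ and $T^g_{\pi'}$ are bijections whose inverses are $T^f_{(\pi')^{-1}}$ and $T^g_{(\pi')^{-1}}$ (consequence of Proposition~\ref{composition} applied to $\pi'\ast(\pi')^{-1}$). It follows that $T^{(f,g)}_{\pi'}$ admits $T^{(f,g)}_{(\pi')^{-1}}$ as two-sided inverse, hence is a bijection between the (finite, by Corollary~\ref{corfiniteness}) sets of matchings over the two basepoints.

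Now, by Proposition~\ref{Cohinvariance}, for every $\sigma_{(a,b)}\in\Sigma_{(a,b)}^{(f,g)}$ one has
\[
{\mathrm{cohcost}_U}\left(T^{(f,g)}_{\pi'}\left(\sigma_{(a,b)}\right)\right)={\mathrm{cohcost}_U}\left(\sigma_{(a,b)}\right).
\]
As $\sigma_{(a,b)}$ ranges over $\Sigma_{(a,b)}^{(f,g)}$, its image $T^{(f,g)}_{\pi'}(\sigma_{(a,b)})$ ranges bijectively over $\Sigma_{(a',b')}^{(f,g)}$, and the two matchings carry the same coherent cost. Therefore
\[
\min_{\sigma_{(a,b)}\in\Sigma_{(a,b)}^{(f,g)}}{\mathrm{cohcost}_U}\!\left(\sigma_{(a,b)}\right)
=\min_{\sigma_{(a',b')}\in\Sigma_{(a',b')}^{(f,g)}}{\mathrm{cohcost}_U}\!\left(\sigma_{(a',b')}\right),
\]
which is exactly the claim that the value of $CD_U(f,g)$ is the same whether it is computed from $(a,b)$ or from $(a',b')$.

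The only nontrivial ingredient is Proposition~\ref{Cohinvariance}, which has already been established; beyond that, the proof reduces to the two routine observations that $U$ is path-connected and that the transport of matchings inherits bijectivity from the bijectivity of the individual point-transport operators $T^f_{\pi'}$, $T^g_{\pi'}$. I therefore do not anticipate any genuine obstacle: the statement is essentially a formal corollary of Proposition~\ref{Cohinvariance}.
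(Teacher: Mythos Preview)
Your proof is correct and follows the same approach as the paper, which simply states that the result ``immediately follows from Proposition~\ref{Cohinvariance}.'' You have carefully spelled out the two routine details the paper leaves implicit---the path-connectedness of $U$ and the bijectivity of $T^{(f,g)}_{\pi'}$ on matchings---but the essential idea is identical.
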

\begin{proof}\label{proof_prop_CDU_dnd}
This immediately follows from Proposition~\ref{Cohinvariance}.
\end{proof}

\begin{proposition}\label{prop_CDU_pd}
$CD_U(f,g)$ is a pseudo-distance.
\end{proposition}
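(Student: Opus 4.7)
The plan is to verify the three defining properties of a pseudo-distance (symmetry, vanishing on the diagonal, and triangle inequality; non-negativity is immediate since the underlying cost is a maximum of non-negative terms). The basepoint independence from Proposition~\ref{prop_CDU_dnd} will be used freely, since it lets me evaluate $CD_U(f,g)$, $CD_U(g,h)$ and $CD_U(f,h)$ at a common $(a,b)\in U$ so that matchings can be composed pointwise.

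For $CD_U(f,f)=0$, I would choose the identity matching $\mathrm{id}_{(a,b)}$ of $\dgm\left(f^*_{(a,b)}\right)$. The key observation is that $T^{(f,f)}_{\pi}(\mathrm{id}_{(a,b)}) = T^f_\pi\circ \mathrm{id}_{(a,b)}\circ (T^f_\pi)^{-1}$ is again the identity on $\dgm\left(f^*_{\pi(1)}\right)$, so $\mathrm{cost}\left(T^{(f,f)}_\pi(\mathrm{id}_{(a,b)})\right)=0$ for every $\pi\in\Pi_{(a,b)}(U)$, giving $\mathrm{cohcost}_U(\mathrm{id}_{(a,b)})=0$. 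For symmetry, I would map each $\sigma\in\Sigma_{(a,b)}^{(f,g)}$ to its inverse $\sigma^{-1}\in\Sigma_{(a,b)}^{(g,f)}$. A short calculation using the definition in Section~\ref{deftm} gives
\[
T^{(g,f)}_\pi(\sigma^{-1})=T^f_\pi\circ\sigma^{-1}\circ(T^g_\pi)^{-1}=\left(T^{(f,g)}_\pi(\sigma)\right)^{-1},
\]
and since the metric $d$ of (\ref{deltaDistance}) is symmetric, $\mathrm{cost}(\tau)=\mathrm{cost}(\tau^{-1})$ for every bijection $\tau$. Thus $\mathrm{cohcost}_U(\sigma)=\mathrm{cohcost}_U(\sigma^{-1})$, and taking the minimum over $\Sigma^{(f,g)}_{(a,b)}$ respectively $\Sigma^{(g,f)}_{(a,b)}$ yields $CD_U(f,g)=CD_U(g,f)$.

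The main work is the triangle inequality $CD_U(f,h)\leq CD_U(f,g)+CD_U(g,h)$, and this is the step I expect to require the most care. Fix a common basepoint $(a,b)\in U$, choose matchings $\sigma\in\Sigma^{(f,g)}_{(a,b)}$ and $\tau\in\Sigma^{(g,h)}_{(a,b)}$ realizing the two coherent matching distances (Definition~\ref{CD} gives a minimum since $\Sigma^{(f,g)}_{(a,b)}$ is finite by Corollary~\ref{corfiniteness}), and form the composition $\tau\circ\sigma\in\Sigma^{(f,h)}_{(a,b)}$. The central identity to establish is
\[
T^{(f,h)}_\pi(\tau\circ\sigma)=T^{(g,h)}_\pi(\tau)\,\circ\,T^{(f,g)}_\pi(\sigma),
\]
which follows directly from the definition by inserting $(T^g_\pi)^{-1}\circ T^g_\pi$ in the middle. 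Once this is in place, the triangle inequality for the extended metric $d$ applied pointwise to each $X\in\dgm\left(f^*_{\pi(1)}\right)$ gives
\[
\mathrm{cost}\left(T^{(f,h)}_\pi(\tau\circ\sigma)\right)\leq \mathrm{cost}\left(T^{(f,g)}_\pi(\sigma)\right)+\mathrm{cost}\left(T^{(g,h)}_\pi(\tau)\right).
\]
Taking the supremum over $\pi\in\Pi_{(a,b)}(U)$ of the left-hand side (and bounding each right-hand term by the corresponding $\mathrm{cohcost}_U$) yields $\mathrm{cohcost}_U(\tau\circ\sigma)\leq CD_U(f,g)+CD_U(g,h)$, so $CD_U(f,h)\leq CD_U(f,g)+CD_U(g,h)$ by Definition~\ref{CD}.

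The subtle point, and the one worth double-checking, is that $d$ is genuinely a (pseudo)metric on $\Delta^*\cup\{\Delta\}$ with the conventions on infinity fixed in Section~\ref{PBN}: the triangle inequality must be verified carefully when some coordinates are $\infty$ or when the ``kill to diagonal'' option in the min of (\ref{deltaDistance}) is active for one leg but not the other. If, however, that is taken as standard (it is implicit in \cite{CeDi*13,CoEdHa07}), the argument above is complete.
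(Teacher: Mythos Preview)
Your proof is correct and follows essentially the same route as the paper: identity matching for $CD_U(f,f)=0$, the identity $T^{(g,f)}_\pi(\sigma^{-1})=\bigl(T^{(f,g)}_\pi(\sigma)\bigr)^{-1}$ together with $\mathrm{cost}(\tau)=\mathrm{cost}(\tau^{-1})$ for symmetry, and the insertion of $(T^g_\pi)^{-1}\circ T^g_\pi$ to obtain $T^{(f,h)}_\pi(\tau\circ\sigma)=T^{(g,h)}_\pi(\tau)\circ T^{(f,g)}_\pi(\sigma)$ plus subadditivity of $\mathrm{cost}$ for the triangle inequality. The only cosmetic difference is that the paper first proves $\mathrm{cohcost}_U(\tau\circ\sigma)\le\mathrm{cohcost}_U(\tau)+\mathrm{cohcost}_U(\sigma)$ for arbitrary $\sigma,\tau$ and then minimizes, whereas you pick minimizers first; the content is the same.
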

\begin{proof}\label{proof_prop_CDU_pd}
For every $(a,b)\in U$, let ${\mathrm{id}}_{(a,b)}\in \Sigma_{(a, b)}^{(f,f)}$ be the identity matching. The property $CD_U(f,f)=0$ follows from the fact that $T^{(f,f)}_{\pi}\left({\mathrm{id}}_{\pi(0)}\right)={\mathrm{id}}_{\pi(1)}$ for every
continuous path $\pi:[0,1]\to U$, implying the equality ${\mathrm{cohcost}_U}\left({\mathrm{id}}_{(a,b)}\right)=0$.
To show symmetry, we observe that
$T^{(f,g)}_{\pi}\left(\sigma_{\pi(0)}\right)\circ
T^{(g,f)}_{\pi}\left(\sigma_{\pi(0)}^{-1}\right)=
T^g_\pi\circ\sigma_{\pi(0)}\circ \left(T^f_\pi\right)^{-1}\circ
T^f_\pi\circ\left(\sigma_{\pi(0)}\right)^{-1}\circ \left(T^g_\pi\right)^{-1}={\mathrm{id}}_{\pi(1)}$ for every
continuous path $\pi:[0,1]\to U$.
It follows that
$T^{(g,f)}_{\pi}\left(\sigma_{\pi(0)}^{-1}\right)=
\left(T^{(f,g)}_{\pi}\left(\sigma_{\pi(0)}\right)\right)^{-1}$.
By recalling that $\mathrm{cost}\left(\sigma_{(a,b)}^{-1}\right)=\mathrm{cost}\left(\sigma_{(a,b)}\right)$ for every $(a,b)\in U$ and every $\sigma_{(a, b)}\in\Sigma_{(a, b)}^{(f,g)}$, we have that
${\mathrm{cohcost}_U}\left(\sigma_{(a,b)}^{-1}\right)=
{\mathrm{cohcost}_U}\left(\sigma_{(a,b)}\right)$, and so $CD_U(f,g) = CD_U(g,f)$.
As for the triangle inequality, let $f,g,h\in {\mathcal{F}}_{U,c}$. If $\sigma_{(a, b)}\in\Sigma_{(a, b)}^{(f,g)}$ and $\tau_{(a, b)}\in\Sigma_{(a, b)}^{(g,h)}$, then
\begin{align*}
{\mathrm{cohcost}_U}\left(\tau_{(a,b)}\circ\sigma_{(a,b)}\right) &= \sup_{\pi\in \Pi_{(a,b)}(U)}\mathrm{cost}\left(T^{(f,h)}_{\pi}\left(\tau_{(a,b)}\circ\sigma_{(a,b)}\right)\right)\\
&= \sup_{\pi\in \Pi_{(a,b)}(U)}\mathrm{cost}\left(T^h_\pi\circ\tau_{(a,b)}\circ\sigma_{(a,b)}\circ \left(T^f_\pi\right)^{-1}\right)\\
&= \sup_{\pi\in \Pi_{(a,b)}(U)}\mathrm{cost}\left(T^h_\pi\circ\tau_{(a,b)}
\circ \left(T^g_\pi\right)^{-1}\circ T^g_\pi\circ\sigma_{(a,b)}\circ \left(T^f_\pi\right)^{-1}\right)\\
&\le \sup_{\pi\in \Pi_{(a,b)}(U)}\left(\mathrm{cost}\left(T^h_\pi\circ\tau_{(a,b)}
\circ \left(T^g_\pi\right)^{-1}\right)\right.\\
&\qquad \left.+
\mathrm{cost}\left(T^g_\pi\circ\sigma_{(a,b)}\circ \left(T^f_\pi\right)^{-1}\right)\right)\\
&\le \sup_{\pi\in \Pi_{(a,b)}(U)}\mathrm{cost}\left(T^h_\pi\circ\tau_{(a,b)}
\circ \left(T^g_\pi\right)^{-1}\right)\\
&\qquad +
\sup_{\pi\in \Pi_{(a,b)}(U)}\mathrm{cost}\left(T^g_\pi\circ\sigma_{(a,b)}\circ \left(T^f_\pi\right)^{-1}\right)\\
&= {\mathrm{cohcost}_U}\left(\tau_{(a,b)}\right)+
{\mathrm{cohcost}_U}\left(\sigma_{(a,b)}\right).
\end{align*}
Therefore, if $\sigma_{(a, b)}\in\Sigma_{(a, b)}^{(f,g)}$ and $\tau_{(a, b)}\in\Sigma_{(a, b)}^{(g,h)}$ the inequality
$${\mathrm{cohcost}_U}\left(\tau_{(a,b)}\circ\sigma_{(a,b)}\right)\le
{\mathrm{cohcost}_U}\left(\tau_{(a,b)}\right)+
{\mathrm{cohcost}_U}\left(\sigma_{(a,b)}\right)$$ holds.
Since $\tau_{(a,b)}\circ\sigma_{(a,b)}\in\Sigma_{(a, b)}^{(f,h)}$, it follows that
$CD_U(f,h)\le
{\mathrm{cohcost}_U}\left(\tau_{(a,b)}\right)+
{\mathrm{cohcost}_U}\left(\sigma_{(a,b)}\right)$ for every $\sigma_{(a, b)}\in\Sigma_{(a, b)}^{(f,g)}$ and for every $\tau_{(a, b)}\in\Sigma_{(a, b)}^{(g,h)}$. Hence
$CD_U(f,h)\le CD_U(g,h)+CD_U(f,g)$.
\end{proof}

The next result shows that the coherent 2-dimensional matching distance is stable, in a suitable sense.

\begin{theorem}\label{thmstability}
If $f,g\in \mathcal{F}_{U,c}$ and $\|f-g\|_\infty< c$, then $CD_U(f,g)\le\|f-g\|_{\infty}$.
\end{theorem}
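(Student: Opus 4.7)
The strategy is to exhibit a single matching at an arbitrary base point whose coherent cost is already bounded by $\|f-g\|_\infty$, so that $CD_U(f,g)$, being defined as a minimum, is automatically bounded. Fix any $(a,b)\in U$ (allowed by Proposition~\ref{prop_CDU_dnd}). By Lemma~\ref{lemmafab} we have $\|f^*_{(a,b)}-g^*_{(a,b)}\|_\infty \le \|f-g\|_\infty < c$, and the 1D Stability Theorem~\ref{stability} yields $d_B\!\left(\dgm(f^*_{(a,b)}),\dgm(g^*_{(a,b)})\right)\le \|f-g\|_\infty$.

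I would then construct the distinguished matching $\sigma_{(a,b)} \in \Sigma_{(a,b)}^{(f,g)}$ as follows. For each proper $X \in \dgm(f^*_{(a,b)})\setminus\{\Delta\}$, the bottleneck bound produces some point of $\dgm(g^*_{(a,b)})$ within distance $\|f-g\|_\infty < c$ from $X$. Since $f,g\in\mathcal{F}_{U,c}$, distinct points of $\dgm(g^*_{(a,b)})$ (including $\Delta$) are separated by more than $2c$, so this close point is unique, and it must be proper because the distance from any proper point to $\Delta$ exceeds $2c > \|f-g\|_\infty$. The symmetric argument shows the proper points of the two diagrams correspond bijectively, so setting $\sigma_{(a,b)}(X):=Y$ on proper points and $\sigma_{(a,b)}(\Delta):=\Delta$ gives a well-defined matching, whose cost is at most $\|f-g\|_\infty$ by construction.

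Finally, I would verify $\mathrm{cohcost}_U(\sigma_{(a,b)})\le\|f-g\|_\infty$. Given any path $\pi\in\Pi_{(a,b)}(U)$ and any proper pair $X,Y$ with $\sigma_{(a,b)}(X)=Y$, we have $d(X,Y)\le\|f-g\|_\infty<c$, so Proposition~\ref{stability_of_transport} applies and gives $d\!\left(T^f_\pi(X),T^g_\pi(Y)\right)\le \|f-g\|_\infty$. Since by the definition in Subsection~\ref{deftm} we have $T^{(f,g)}_\pi(\sigma_{(a,b)})(T^f_\pi(X))=T^g_\pi(Y)$, every proper contribution to $\cost\!\left(T^{(f,g)}_\pi(\sigma_{(a,b)})\right)$ is at most $\|f-g\|_\infty$; the contribution from $\Delta\mapsto\Delta$ is zero. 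Taking the supremum over $\pi$ yields $\mathrm{cohcost}_U(\sigma_{(a,b)})\le\|f-g\|_\infty$, and the conclusion follows from Definition~\ref{CD}.

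The only mildly delicate step is the construction of $\sigma_{(a,b)}$ as a genuine bijection on proper points, which relies crucially on the separation built into the definition of $\mathcal{F}_{U,c}$ together with the strict inequality $\|f-g\|_\infty<c$. Once that is in place, the rest is essentially a direct application of the stability of transport established in Proposition~\ref{stability_of_transport}, so no serious obstacle is expected.
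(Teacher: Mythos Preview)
Your proof is correct and follows essentially the same route as the paper: fix a basepoint, build the ``nearest-neighbor'' matching using the separation hypothesis in $\mathcal{F}_{U,c}$, and invoke Proposition~\ref{stability_of_transport} to bound the cost after transport along any path. The paper's argument is terser (it does not spell out bijectivity), while you supply that detail; one small caution is that your wording restricts to ``proper'' points, but the same nearest-neighbor argument applies verbatim to improper points $(u,\infty)$, so nothing is missing.
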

\begin{proof}\label{proofprop3}
Fix $\left(\bar a,\bar b\right)\in U$ and take the matching $\sigma_{\left(\bar a,\bar b\right)}:\dgm\left(f_{\left(\bar a,\bar b\right)}^*\right)\to \dgm\left(g_{\left(\bar a,\bar b\right)}^*\right)$ obtained by
taking each point $X\in \dgm\left(f_{\left(\bar a,\bar b\right)}^*\right)$ to the unique point in $\dgm\left(g_{(a,b)}^*\right)$ having distance from $X$ less than or equal to $\|f-g\|_\infty< c$. Since
$\mathrm{cost}\left(\sigma_{\left(\bar a,\bar b\right)}\right)\le \|f-g\|_\infty$, Proposition~\ref{stability_of_transport} implies that
${\mathrm{cohcost}_U}\left(\sigma_{\left(\bar a,\bar b\right)}\right)\le \|f-g\|_\infty$.
\end{proof}

\begin{remark}\label{remcov}
The definition of our coherent matching distance could be easily expressed by means of the concept of universal covering $\mathcal{C}$ of $U$. In fact, each homotopy class of paths based at $\left(\bar a,\bar b\right)\in U$ that is relative to their endpoints corresponds to a point in $\mathcal{C}$. If $U$ is replaced by $\mathcal{C}$ in our construction, we have that any matching defined at a point of $\mathcal{C}$ can be transported in a unique way to any other point of $\mathcal{C}$. The replacement of the parameter set $U$ with its universal covering would naturally lead to an equivalent definition of coherent matching distance. In the present exposition, we preferred to maintain the set $U$ for the sake of simplicity.
\end{remark}

\subsubsection{Computational aspects}\label{compasp}
Let us consider the open set $U$ and the finite set $\{(a_1,b_1),\ldots,(a_q,b_q)\}$ of singular pairs for $f$ defined in Remark~\ref{remgenerators}, with reference to a basepoint $\left(\bar a,\bar b\right)\in U$. Let us choose another point $(a,b)\in U$ and fix a continuous path $\pi_{(a,b)}:[0,1]\to U$ with $\pi_{(a,b)}(0)=\left(\bar a,\bar b\right)$ and $\pi_{(a,b)}(1)=(a,b)$.
Let $\Pi_{\left(\bar a,\bar b\right)\leadsto (a,b)}(U)$ and $L_{\left(\bar a,\bar b\right)}(U)$ be the sets of all continuous paths in $U$ from $\left(\bar a,\bar b\right)$ to $(a,b)$ and of loops $\pi:[0,1]\to U$ with $\pi(0)=\left(\bar a,\bar b\right)$, respectively.
We observe that each  $\pi\in \Pi_{\left(\bar a,\bar b\right)\leadsto (a,b)}(U)$
is homotopic relatively to its extrema $\left(\bar a,\bar b\right)$, $(a,b)$ to the path $\pi'\ast \pi_{(a,b)}$,
where $\pi':=\pi\ast \pi_{(a,b)}^{-1}\in L_{\left(\bar a,\bar b\right)}(U)$.
From Propositions~\ref{composition} and~\ref{homotopy} it follows that
\begin{align*}
\sup_{\pi\in \Pi_{\left(\bar a,\bar b\right)\leadsto (a,b)}(U)}\mathrm{cost}\left(T^{(f,g)}_{\pi}\left(\sigma_{(a,b)}\right)\right) &= \sup_{\pi'\in L_{\left(\bar a,\bar b\right)}(U)}\mathrm{cost}\left(T^{(f,g)}_{\pi'\ast \pi_{(a,b)}}\left(\sigma_{(a,b)}\right)\right)\\
&= \sup_{\pi'\in L_{\left(\bar a,\bar b\right)}(U)}\mathrm{cost}
\left(
T^{(f,g)}_{\pi_{(a,b)}}\left(T^{(f,g)}_{\pi'}\left(\sigma_{(a,b)}\right)\right)
\right).
\end{align*}
For every index $j\in \{1,\ldots,q\}$, we can choose
a loop $\pi_j:[0,1]\to U$ starting at the regular point $\left(\bar a,\bar b\right)$ and turning once around $(a_j,b_j)$ but not around other singular pairs for $f$. Then the
set of matchings $\left\{T^{(f,g)}_{\pi'}\left(\sigma_{\left(\bar a,\bar b\right)}\right):\pi'\in L_{\left(\bar a,\bar b\right)}(U)\right\}$
equals the set of all matchings that can be written as
$T^g_{\pi'}\circ\sigma_{\left(\bar a,\bar b\right)}\circ \left(T^f_{\pi'}\right)^{-1}$ with $\pi'\in L_{\left(\bar a,\bar b\right)}(U)$, i.e. as
$$T^g_{\pi_{j_r}}\circ\cdots \circ T^g_{\pi_{j_1}}\circ\sigma_{\left(\bar a,\bar b\right)}\circ
\left(T^f_{\pi_{j_1}}\right)^{-1}\circ\cdots \circ \left(T^f_{\pi_{j_r}}\right)^{-1}$$ with $j_1,\ldots,j_r\in \{1,\ldots,q\}$, because of Proposition~\ref{generators}.
It follows that the computation of $\sup_{\pi\in \Pi_{\left(\bar a,\bar b\right)\leadsto (a,b)}(U)}\mathrm{cost}\left(T^{(f,g)}_{\pi}\left(\sigma_{(a,b)}\right)\right)$ just requires to manage the finite set of paths $\{\pi_1,\ldots,\pi_q,\pi_{(a,b)}\}$.

\section{A maximum principle for the coherent transport}\label{proofconjecture}

We are now ready to prove the most important property of the coherent transport.
Let us consider the set $\overline{\Pi}$ of all paths in $U$ starting at a fixed pair $\left(\bar a,\bar b\right)$ and ending at a variable pair with abscissa different from $\bar a$. In this section we show that the value $\mathrm{cost}\left(T_{\pi}^{(f,g)}\left(\sigma_{\left(\bar a,\bar b\right)}\right)\right)$ involved in the definitions of ${\mathrm{cohcost}_U}$ and $CD_U$ satisfies a sort of maximum principle as a function in the variable $\pi\in\overline{\Pi}\cup\{\bar\pi\}$, where $\bar\pi$ is the constant path at $\left(\bar a,\bar b\right)$. Indeed, we are going to prove that if $\bar\pi$ is a point of strict local maximum for the function
$\mathrm{cost}\left(T_{\pi}^{(f,g)}\left(\sigma_{\left(\bar a,\bar b\right)}\right)\right)$ varying
$\pi$ in $\overline{\Pi}\cup\{\bar\pi\}$ (up to homotopies of $\pi$ relative to its endpoints), then $\bar a$ must equal $\frac{1}{2}$ (Theorem~\ref{maxprinc}). As a consequence of this statement, we will prove the main result of this section (Theorem~\ref{finalth}), casting new light on the question presented at the beginning of this paper.
Before proceeding, we recall that in this paper the symbol $\Sigma_{(a, b)}^{(f,g)}$ denotes the set of all matchings from $\dgm\left(f_{(a,b)}^*\right)$ to $\dgm\left(g_{(a,b)}^*\right)$, with $(a,b)\in U$ and $f,g\in \mathcal{F}_{U,c}$,
while $\Pi_{\left(\bar a,\bar b\right)\leadsto (a,b)}(U')$ is the set of all continuous paths in $U'$ from $\left(\bar a,\bar b\right)$ to $(a,b)$, for an open set $U'$ of ${\mathcal {P}}(\Lambda^+)$ containing both $\left(\bar a,\bar b\right)$ and $(a,b)$.

\begin{theorem}\label{a<>1/2}
 Let $f,g\in \mathcal{F}_{U,c}$ and $\left(\bar a,\bar b\right)\in U$. If $\sigma_{\left(\bar a,\bar b\right)}\in\Sigma_{\left(\bar a,\bar b\right)}^{(f,g)}$ with $\mathrm{cost}\left(\sigma_{\left(\bar a,\bar b\right)}\right) < \infty$ and $V$ is an open subset of $U$ containing $\left(\bar a,\bar b\right)$, then these two properties hold:
\begin{enumerate}
  \item If $\bar a<\frac{1}{2}$ then there exist a point $(a',b')\in V$ with $\bar a<a'<\frac{1}{2}$ and a path $\pi'\in \Pi_{\left(\bar a,\bar b\right)\leadsto (a',b')}(V)$ such that
$
\mathrm{cost}\left(T_{\pi'}^{(f,g)}\left(\sigma_{\left(\bar a,\bar b\right)}\right)\right)\ge \mathrm{cost}\left(\sigma_{\left(\bar a,\bar b\right)}\right)
$.
  \item If $\bar a>\frac{1}{2}$ then there exist a point $(a',b')\in V$ with $\frac{1}{2}<a'<\bar a$ and a path $\pi'\in \Pi_{\left(\bar a,\bar b\right)\leadsto (a',b')}(V)$ such that
$
\mathrm{cost}\left(T_{\pi'}^{(f,g)}\left(\sigma_{(\bar a,\bar b)}\right)\right)\ge \mathrm{cost}\left(\sigma_{\left(\bar a,\bar b\right)}\right)
$.
\end{enumerate}
\end{theorem}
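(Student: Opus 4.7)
\emph{Paragraph 1 (setup).} By symmetry I focus on case~(1) and assume $\bar a < 1/2$. Pick a matched pair $\bar X \in \dgm(f^*_{(\bar a,\bar b)})$ and $\bar Y := \sigma_{(\bar a,\bar b)}(\bar X)$ achieving $d(\bar X,\bar Y) = \mathrm{cost}(\sigma_{(\bar a,\bar b)}) =: c_0$. Since $c_0 < \infty$ this pair is either both proper, both improper, or proper-to-$\Delta$. The plan is to construct a short path $\pi':[0,1]\to V$ with $\pi'(0) = (\bar a,\bar b)$ and $\pi'(1) = (a',b')$ lying in $\{\bar a < a < 1/2\}$, such that $d\bigl(T^f_{\pi'}(\bar X),\,T^g_{\pi'}(\bar Y)\bigr) \ge c_0$. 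Because the cost of any matching dominates the distance of any single matched pair, this immediately yields the inequality claimed.

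\emph{Paragraph 2 (reformulation via $t$-parameters).} The key reformulation uses the Position Theorem~\ref{GT*}: every finite coordinate $w$ of a non-diagonal point of $\dgm(h^*_{(a,b)})$, $h\in\{f,g\}$, arises from a grid intersection $P=(x_P,y_P)\in r_{(a,b)}\cap\Gamma(h)$ as $w = \min\{a,1-a\}\cdot t_P$, where $t_P := x_P + y_P$. Observe that $t_P$ is intrinsic to $P$ because every point of $r_{(a,b)}$ satisfies $x+y = t$. Consequently $d(X,Y)$ for a matched pair factors as $\min\{a,1-a\}\cdot \min\{\tilde A,\tilde B\}$, where $\tilde A$ and $\tilde B$ are the matching and diagonal costs rewritten in $t$-parameters of birth and death. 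Since $\bar a < 1/2$, the prefactor locally equals $a$ and is \emph{strictly increasing} in $a$.

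\emph{Paragraph 3 (motion of $t$-parameters and the key identity).} Next I track how each intersection's $t$-parameter evolves. For a contour arc $\gamma : s \mapsto (x(s),y(s))$, the implicit relation $(1-a)x(s) - a y(s) = b$ yields, in any direction $(\dot a,\dot b)$,
\[
\dot t \;=\; \kappa(s)\,\bigl(\dot b + t\,\dot a\bigr), \qquad \kappa(s) := \frac{x'(s)+y'(s)}{(1-a)x'(s)-ay'(s)}.
\]
A one-line manipulation gives the pivotal identity
\[
1 + a\,\kappa(s) \;=\; \frac{x'(s)}{(1-a)x'(s)-ay'(s)}.
\]
Assumption~$(iv)$ forces $x'(s)$ and $y'(s)$ to be of opposite signs along every proper arc, so numerator and denominator above share a sign and $1 + a\kappa(s) > 0$. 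The same inequality holds on horizontal improper contours (where $\kappa = 1/(1-a)$), while vertical improper contours collapse to $1+a\kappa = 0$, a degeneracy handled separately.

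\emph{Paragraph 4 (construction of $\pi'$ and main obstacle).} To finish, take a direction $(\dot a,\dot b)$ with $\dot a > 0$. In the diagonal-dominated case $d(\bar X,\bar Y) = a(t_1^d - t_1^b)/2$ (which covers $\bar Y = \Delta$), set $\dot b := -t_1^b\dot a$ to freeze $t_1^b$; then the derivative of the distance is $\tfrac{\dot a}{2}(t_1^d - t_1^b)(1+a\kappa_d) \ge 0$ by Paragraph~3. In the matching-dominated case $d = a|t_1 - t_2|$ (one of the two coordinatewise differences, or the improper-improper case), the same prescription $\dot b = -t_1\dot a$ gives $\dot a(t_1-t_2)(1+a\kappa_2)$, again nonnegative; if the index-$2$ arc is vertical one swaps roles and fixes $t_2$ instead, and only when \emph{both} involved arcs are vertical does the degeneracy occur --- but then $d$ is easily seen to be independent of $(a,b)$ and the required inequality is trivial. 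Simultaneous ties among several cost-realizing pairs are resolved by intersecting the (open) admissible directional cones supplied by each pair; each contains a ray with $\dot a > 0$, so their intersection does as well. Integrating such a direction inside the open set $V$ and stopping before $a = 1/2$ produces the required $(a',b')$ and $\pi'$. The main obstacle is managing uniformly the many geometric configurations (proper vs.\ improper, matching vs.\ diagonal cost, single vs.\ tied argmax, vertical-contour degeneracies), but every one of them funnels through the single sign inequality $1+a\kappa(s) > 0$ of Paragraph~3, which is ultimately what prevents the cost from admitting a strict local maximum at any $(\bar a,\bar b)$ with $\bar a \ne 1/2$.
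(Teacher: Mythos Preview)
Your approach is essentially the paper's, recast in differential language. Your prescription ``freeze $t_1$ via $\dot b=-t_1\dot a$'' is literally the paper's ``rotate $r_{(a,b)}$ around the grid point whose $t$-parameter is $t_1$'' (check: the point $t_1(a,1-a)+(b,-b)$ is fixed under $(a,b)\mapsto(a+\epsilon,b-\epsilon t_1)$), and your sign identity $1+a\kappa(s)>0$ is the infinitesimal form of the geometric fact the paper uses directly: when the line pivots about a fixed grid point with slope decreasing, every other grid intersection on a proper or horizontal contour moves to larger abscissa. The paper works with abscissas (using $u=x-b$ when $a<1/2$) and writes down a finite inequality $|x_{A'}-x_{B'}|\ge|x_A-x_B|$, $\xi_{A'}-x_{A'}\ge\xi_A-x_A$ rather than a derivative; your computation integrates to the same thing for a short enough move.

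The one real gap is your tie-handling sentence. The assertion that the admissible directional cones ``each contain a ray with $\dot a>0$, so their intersection does as well'' is not a valid general principle in a two-dimensional direction space: two closed half-planes through the origin can each contain rays with $\dot a>0$ yet intersect only where $\dot a\le 0$. What actually rescues the argument is that a \emph{single} pivot choice handles both the diagonal and matching branches at once. Take index~$1$ to be the partner farther from $\Delta$ and assume WLOG the birth difference dominates the death difference in the matching cost; then freezing $t_1^b$ makes the diagonal term $a(t_1^d-t_1^b)/2$ nondecreasing (factor $1+a\kappa_1^d\ge 0$) \emph{and} the dominant matching term $a|t_1^b-t_2^b|$ nondecreasing (factor $1+a\kappa_2^b\ge 0$) simultaneously, by your own formulas. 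Since these were the two maxima inside the $\min$, the $\min$ cannot drop. This is exactly the paper's explicit choice. Replace the cone-intersection sentence with this observation and your sketch goes through.
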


\begin{proof}
Let us prove $(1)$, since the proof of $(2)$ is completely analogous.

The value $\mathrm{cost}\left(\sigma_{\left(\bar a,\bar b\right)}\right)$ is given by the distance $d(A,B)$ between a point $A\in\dgm\left(f_{\left(\bar a,\bar b\right)}^*\right)$ and a point $B\in\dgm\left(g_{\left(\bar a,\bar b\right)}^*\right)$.
By possibly exchanging the roles of $A$ and $B$, we can assume $A$ not closer than $B$ to $\Delta$.

We first treat the case $A, B\neq\Delta$, so that we can write $A=(u_A,v_A)$ and $B=(u_B,v_B)$, with $u_A<v_A$, $u_B<v_B$, $u_A<\infty$, $u_B<\infty$ and $v_A,v_B\le \infty$. In particular, note that $\mathrm{cost}\left(\sigma_{\left(\bar a,\bar b\right)}\right) < \infty$ implies that either $v_A,v_B < \infty$ or $v_A,v_B = \infty$. If $v_A,v_B < \infty$, by the Position Theorem~\ref{GT*}, and recalling that $\bar a<\frac{1}{2}$, we know that four points $(x_A,y_A),(\xi_A,\eta_A)\in r_{\left(\bar a,\bar b\right)}\cap\Gamma(f)$ and $(x_B,y_B),(\xi_B,\eta_B)\in r_{\left(\bar a,\bar b\right)}\cap\Gamma(g)$ exist, for which
\begin{eqnarray}\label{pippo}
u_A = x_A - \bar b,\ v_A  = \xi_A - \bar b,\ u_B = x_B - \bar b,\ v_B = \xi_B - \bar b.
\end{eqnarray}
Since $A$ is not closer than $B$ to $\Delta$, $v_A-u_A  \geq v_B-u_B$ (i.e. $\xi_A - x_A\geq \xi_B - x_B$).
Moreover, it is not restrictive to assume that $|u_A-u_B|\ge |v_A-v_B|$ (i.e. $|x_A-x_B|\ge |\xi_A-\xi_B|$), as the proof works analogously if  $|v_A-v_B|\ge |u_A-u_B|$ (i.e. $|\xi_A-\xi_B|\ge |x_A-x_B|$).
We also observe that $x_A<\xi_A$ and $x_B<\xi_B$, since $u_A<v_A$ and $u_B<v_B$.

We can find an open ball $W$ entirely contained in $V$ and centered at $\left(\bar a,\bar b\right)$.
Let us take a pair $(a',b')\in W$ such that $(x_A,y_A) \in r_{(a',b')}$ and $\bar a<a'<\frac{1}{2}$. Let
$\pi':[0,1]\to W$ be the straight path from $\left(\bar a,\bar b\right)$ to $(a',b')$, parameterized by $s\in [0,1]$ with equation
\begin{equation}\label{path}
\pi'(s) = (1-s) \cdot \left(\bar a,\bar b\right) + s\cdot (a',b').
\end{equation}
Following the path $\pi'$ defined in equation~(\ref{path}), the admissible line $r_{(a',b')}$ is thus obtained by rotating $r_{\left(\bar a,\bar b\right)}$ around the point $(x_A,y_A)$, in a way that the slope of $r_{\left(\bar a,\bar b\right)}$ progressively decreases while approaching $r_{(a',b')}$.
(Analogously, in the case $|v_A-v_B|\ge |u_A-u_B|$ we should take a pair $(a',b')\in W$ such that $(\xi_A,\eta_A) \in r_{(a',b')}$ and $\bar a<a'<\frac{1}{2}$, and rotate the line $r_{\left(\bar a,\bar b\right)}$ around the point $(\xi_A,\eta_A)$ in a way that the slope of $r_{\left(\bar a,\bar b\right)}$ progressively decreases while approaching $r_{(a',b')}$.)
By definition of transported matching (Section~\ref{deftm}), the matching $T^{(f,g)}_{\pi'}\left(\sigma_{\left(\bar a,\bar b\right)}\right): \dgm\left(f^*_{(a',b')}\right)\to\dgm\left(g^*_{(a',b')}\right)$ must match $A':=T_{\pi'}^f(A)$ to $B':=T_{\pi'}^g(B)$. Obviously, $\pi'\in \Pi_{\left(\bar a,\bar b\right)\leadsto (a',b')}(V)$ and $\mathrm{cost}\left(T^{(f,g)}_{\pi'}\left(\sigma_{\left(\bar a,\bar b\right)}\right)\right)\geq d(A',B')$.
Let us denote $A'$ and $B'$ by $(u_{A'},v_{A'})$ and $(u_{B'},v_{B'})$, respectively.

We need to show that $d(A',B')\geq d(A,B)$, so proving that the inequality
$\mathrm{cost}\left(T_{\pi'}^{(f,g)}\left(\sigma_{\left(\bar a,\bar b\right)}\right)\right)
\ge \mathrm{cost}\left(\sigma_{\left(\bar a,\bar b\right)}\right)
$
holds in case $A, B\neq\Delta$ and $v_A,v_B < \infty$. To do this, we only consider the case $d(A,B)>0$, as the case $d(A,B) = 0$ is trivial.

Recall now the transport of $A$ and $B$ induced by the same path $\pi'$. The Position Theorem~\ref{GT*} implies that four points $(x_{A'},y_{A'})$, $(\xi_{A'},\eta_{A'})\in r_{(a',b')}\cap\Gamma(f)$ and $(x_{B'},y_{B'})$, $(\xi_{B'},\eta_{B'})\in r_{(a',b')}\cap\Gamma(g)$ exist such that
$$
u_{A'} = x_{A'} - b',\ v_{A'}  = \xi_{A'} - b',\ u_{B'} = x_{B'} - b',\ v_{B'} = \xi_{B'}- b'.
$$
Note that $x_{A'} = x_A$, by construction of $\pi'$. Now, if $x_A < x_B$ it necessarily follows that $x_{B'} \ge x_{B}$: Indeed, this is a direct consequence of Proposition~\ref{uniquePath}, the Position Theorem~\ref{GT*} and the structure of $\Gamma(g)$ (see Figure~\ref{figmainproof}). In particular, $x_{B} = x_{B'}$ if and only if both $(x_{B},y_{B})$ and $(x_{B'},y_{B'})$ belong to the same vertical, improper contour of $g$. Analogously, if $x_A > x_B$ it follows that $x_{B'} \leq x_{B}$. Therefore, in all cases we have $|x_A-x_B| \leq |x_{A'}-x_{B'}|$, so that
{\setlength\arraycolsep{2pt}
$$
\begin{array}{ccccc}
\max\left\{|x_A - x_B|, |\xi_A-\xi_B|\right\} &   =   & |x_A-x_B| & &\vspace*{1mm}\\
& \leq & |x_{A'}-x_{B'}| & \leq & \max\left\{|x_{A'} - x_{B'}|, |\xi_{A'}-\xi_{B'}|\right\}.
\end{array}
$$
A similar reasoning holds for the relation between $x_A=x_{A'}, \xi_A$ and $\xi_{A'}$. Precisely, from $x_A < \xi_A$ it necessarily follows that $\xi_{A'} \geq \xi_{A}$. Thus $\xi_{A}-x_A\leq\xi_{A'}-x_{A'}$.
By recalling that $\xi_A - x_A\ge \xi_B - x_B$, we can write}
{\setlength\arraycolsep{2pt}
$$
\begin{array}{ccccc}
\max\left\{\dfrac{\xi_A - x_A}{2}, \dfrac{\xi_B - x_B}{2}\right\} &   =   & \dfrac{\xi_A - x_A}{2} & &\vspace*{1mm}\\
& \leq & \dfrac{\xi_{A'} - x_{A'}}{2} & \leq & \max\left\{\dfrac{\xi_{A'} - x_{A'}}{2}, \dfrac{\xi_{B'} - x_{B'}}{2}\right\}.
\end{array}
$$
The definition of $d$ (cf.~(\ref{deltaDistance})) and our assumptions state that
\begin{align}
d(A,B)&= d\left(\left(u_A,v_A\right),\left(u_B,v_B)\right)\right) \\
      &:=\min\left\{\max\left\{\left|u_A-u_B\right|,\left|v_A-v_B\right|\right\},\max\left\{\frac{v_A-u_A}{2},\frac{v_B-u_B}{2}\right\}\right\} \nonumber\\
      &=\min\left\{\max\left\{\left|x_A-x_B\right|,\left|\xi_A-\xi_B\right|\right\},\max\left\{\frac{\xi_A-x_A}{2},\frac{\xi_B-x_B}{2}\right\}\right\}, \nonumber
\label{eq:1}
\end{align}
\begin{align}
d(A',B')&= d\left(\left(u_{A'},v_{A'}\right),\left(u_{B'},v_{B'})\right)\right) \\
      &:=\min\left\{\max\left\{\left|u_{A'}-u_{B'}\right|,\left|v_{A'}-v_{B'}\right|\right\},\max\left\{\frac{v_{A'}-u_{A'}}{2},\frac{v_{B'}-u_{B'}}{2}\right\}\right\} \nonumber\\
      &=\min\left\{\max\left\{\left|x_{A'}-x_{B'}\right|,\left|\xi_{A'}-\xi_{B'}\right|\right\},\max\left\{\frac{\xi_{A'}-x_{A'}}{2},\frac{\xi_{B'}-x_{B'}}{2}\right\}\right\}. \nonumber
\label{eq:2}
\end{align}
Therefore, $d(A',B') \geq d(A,B)$ for $A, B\neq\Delta$ and $v_A,v_B < \infty$.

The case when $A, B\neq\Delta$ and $v_A=v_B= \infty$ can be treated analogously, after setting $\xi_A=\xi_{A'}=\xi_B=\xi_{B'}=\infty$
and observing that $d(A,B)=|u_A-u_B|$. Also in this case we get $d(A',B') \geq d(A,B)$.

Suppose now that $A\neq\Delta$ and $B=\Delta$, so that $d(A,B)=d(A,\Delta)$ and $B'=T_{\pi'}^g(B) = \Delta$ by choosing $(a',b')$ and $\pi'$ as we did above. It is easy to see that
$d(A',\Delta) =\frac{\xi_{A'} - x_{A'}}{2}\geq \frac{\xi_A - x_A}{2}=d(A,\Delta)$ with $x_{A}=x_{A'}$, i.e. $d(A',B') \geq d(A,B)$.

If $A=B=\Delta$ then $A=A'=B=B'=\Delta$, so that $d(A',B') = d(A,B)$.

(In these steps, the meaning of the symbols is the same established in the previous part of the proof.)

In all cases $d(A',B') \ge d(A,B)$, so that
$\mathrm{cost}\left(T_{\pi'}^{(f,g)}\left(\sigma_{\left(\bar a,\bar b\right)}\right)\right)
\ge
\mathrm{cost}\left(\sigma_{\left(\bar a,\bar b\right)}\right)$, since
$\mathrm{cost}\left(T_{\pi'}^{(f,g)}\left(\sigma_{\left(\bar a,\bar b\right)}\right)\right)\ge d(A',B')$ and
$d(A,B) =\mathrm{cost}\left(\sigma_{\left(\bar a,\bar b\right)}\right)$.
}
\end{proof}

\begin{figure}
\begin{center}
\begin{tikzpicture}[scale=1.0]
\draw [red,line width=1.2] (4.25,-1.55) arc (40:58:11);
\draw [red,line width=1.2] (4.25,0.46) arc (30:70:5);
\draw [->,line width=0.8] (2.73,2.8) arc (70:58:3);

\draw [red,line width=1] (1.65,1) -- (6,1);
\draw [red,line width=1] (3.5,-1.55) -- (3.5,6.45);

\draw [blue,line width=1.2] (2,0.5) -- (3.7,6.45);
\draw [blue,line width=1.2] (2,0.5) -- (5.9,6.02);
\draw (2.38,3.7) node {$r_{\left(\bar a,\bar b\right)}$};
\draw (4.7,3.2) node {$r_{(a',b')}$};

\draw [line width=0.1,black,fill=white] (2,0.5) circle (0.08);
\draw (2.8,0.55) node {$(x_A,y_A)$};

\draw [line width=0.1,green,fill=green] (2.13,1) circle (0.08);
\draw [line width=0.1,green,fill=green] (2.35,1) circle (0.08);

\draw [line width=0.1,black,fill=black] (2.5,2.25) circle (0.08);
\draw [line width=0.1,black,fill=black] (3,1.9) circle (0.08);

\draw [line width=0.1,yellow,fill=yellow] (3.5,2.65) circle (0.08);
\draw [line width=0.1,yellow,fill=yellow] (3.5,5.8) circle (0.08);

\end{tikzpicture}
\end{center}\caption{A passage in the proof of Theorem~\ref{a<>1/2} (case $\bar a<\frac{1}{2}$, $v_A,v_B<\infty$, $|x_A-x_B|\ge |\xi_A-\xi_B|$, $x_A< x_B$, with $A,B\neq \Delta$ and $A$ not closer than $B$ to $\Delta$): While rotating $r_{(a,b)}$ toward $r_{(a',b')}$, the abscissa of the intersection of $r_{(a,b)}$ with $\Gamma(g)$ cannot decrease, i.e. $x_{B'}\ge x_{B}$. Here the extended Pareto grid $\Gamma(g)$ is represented in red. The green, black and yellow pairs of points refer to the possible locations of $(x_B,y_B)$ and $(x_{B'},y_{B'})$.}
\label{figmainproof}
\end{figure}
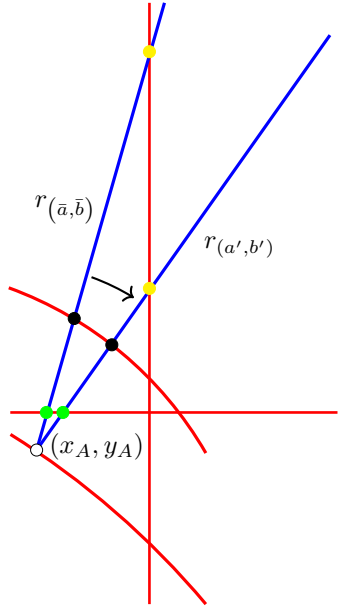

As a direct consequence of Theorem~\ref{a<>1/2}, we have the following result.

\begin{theorem}[Maximum Principle]\label{maxprinc}
Let $f,g\in \mathcal{F}_{U,c}$, $\left(\bar a,\bar b\right)\in U$ and
$\sigma_{\left(\bar a,\bar b\right)}\in\Sigma_{\left(\bar a,\bar b\right)}^{(f,g)}$ with $\mathrm{cost}\left(\sigma_{\left(\bar a,\bar b\right)}\right) < \infty$. If an open set $W\subseteq U$ exists, such that $\left(\bar a,\bar b\right)\in W$ and for all $(a,b)\in W$ with $a\neq \bar a$ the inequality
$
\mathrm{cost}\left(\sigma_{\left(\bar a,\bar b\right)}\right) > \mathrm{cost}\left(T_{\pi}^{(f,g)}\left(\sigma_{\left(\bar a,\bar b\right)}\right)\right)
$
holds for every path $\pi\in \Pi_{\left(\bar a,\bar b\right)\leadsto (a,b)}(W)$, then $\bar a=\frac{1}{2}$.
\end{theorem}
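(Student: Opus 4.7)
The plan is to prove this by contraposition, directly invoking Theorem~\ref{a<>1/2}. I would assume $\bar a \neq \tfrac{1}{2}$ and then produce a pair $(a',b') \in W$ with $a' \neq \bar a$ together with a path $\pi' \in \Pi_{(\bar a,\bar b)\leadsto (a',b')}(W)$ for which
\[
\mathrm{cost}\bigl(T_{\pi'}^{(f,g)}(\sigma_{(\bar a,\bar b)})\bigr) \;\geq\; \mathrm{cost}\bigl(\sigma_{(\bar a,\bar b)}\bigr),
\]
which directly contradicts the strict inequality hypothesised in the statement of the Maximum Principle.

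The first step is to notice that the set $W\subseteq U$ provided by the hypothesis is an open neighbourhood of $(\bar a,\bar b)$ and that the finite-cost assumption $\mathrm{cost}(\sigma_{(\bar a,\bar b)})<\infty$ is shared with Theorem~\ref{a<>1/2}. One may therefore apply Theorem~\ref{a<>1/2} with the choice $V:=W$. Splitting into the two cases: if $\bar a<\tfrac12$, part~(1) of Theorem~\ref{a<>1/2} supplies a point $(a',b')\in W$ with $\bar a<a'<\tfrac12$ together with a path $\pi'\in \Pi_{(\bar a,\bar b)\leadsto (a',b')}(W)$ satisfying the non-strict inequality on costs above; if $\bar a>\tfrac12$, part~(2) supplies a point $(a',b')\in W$ with $\tfrac12<a'<\bar a$ and an analogous path. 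In either case the strict inequalities on the abscissae guarantee $a'\neq \bar a$, so that the pair $(a',b')$ and the path $\pi'$ fall squarely within the scope of the hypothesis of the Maximum Principle, forcing $\mathrm{cost}(\sigma_{(\bar a,\bar b)}) > \mathrm{cost}(T_{\pi'}^{(f,g)}(\sigma_{(\bar a,\bar b)}))$. This contradicts the conclusion of Theorem~\ref{a<>1/2}, hence $\bar a=\tfrac12$.

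There is essentially no substantive obstacle to overcome, since the geometric content of the Maximum Principle has been packaged into Theorem~\ref{a<>1/2}: the construction of the rotation of $r_{(\bar a,\bar b)}$ about a suitable intersection with the extended Pareto grid, the control via the Position Theorem~\ref{GT*} of how the abscissae and ordinates of the transported endpoints evolve, and the case analysis over proper/improper points and over whether $A$ or $B$ equals $\Delta$ are all carried out in the proof of Theorem~\ref{a<>1/2}. The only care required here is the bookkeeping observation that the points $(a',b')$ produced by both parts of Theorem~\ref{a<>1/2} genuinely have $a'\neq\bar a$, so that the hypothesis of Theorem~\ref{maxprinc} is legitimately applicable to them.
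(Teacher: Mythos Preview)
Your proposal is correct and matches the paper's approach exactly: the paper states that Theorem~\ref{maxprinc} is a direct consequence of Theorem~\ref{a<>1/2}, and you have spelled out precisely the contrapositive argument that makes this immediate. The only observation needed---that the $(a',b')$ supplied by Theorem~\ref{a<>1/2} satisfies $a'\neq\bar a$---is exactly the one you identify.
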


\begin{remark}\label{remstraightlines}
If we consider a convex open set $V\subseteq U$ and assume $\left(\bar a,\bar b\right),(a',b')\in V$, then any
path $\pi\in \Pi_{\left(\bar a,\bar b\right)\leadsto (a',b')}(V)$ and the straight path $\pi'(s) = (1-s) \cdot \left(\bar a,\bar b\right) + s\cdot (a',b')$ for $s\in[0,1]$ are homotopic to each other relatively to their common extrema. Therefore, $T^f_{\pi}\equiv T^f_{\pi'}$
(Proposition~\ref{homotopy}). As a consequence, the statements of Theorem~\ref{a<>1/2} and Theorem~\ref{maxprinc} could be reformulated in terms of coherent transport along straight lines. We preferred not to do that for the sake of simplicity in our exposition.
\end{remark}

Now we can answer the main question presented at the beginning of this paper, provided that $D_{\mathrm{match}}$ is replaced with $CD_U$.

\begin{theorem}\label{finalth}
Let $f,g\in \mathcal{F}_{U,c}$ and $\left(\bar a,\bar b\right)\in U$.
Assume that the closure $\overline{U}$ of $U$ in $\mathbb{R}^2$ is a compact set contained in the open set ${\mathcal {P}}(\Lambda^+)=\,]0,1[\,\times\R$, and that $\partial U$ is a $C^1$-submanifold of $\mathbb{R}^2$.
Then a matching $\sigma_{\left(\bar a,\bar b\right)}\in\Sigma_{\left(\bar a,\bar b\right)}^{(f,g)}$, a point $\left(\hat a,\hat b\right)\in \overline{U}$ and a continuous path $\hat \pi:[0,1]\to\overline{U}$ from $\left(\bar a,\bar b\right)$ to $\left(\hat a,\hat b\right)$ exist such that
\begin{enumerate}
\item $\hat\pi([0,1[)\subseteq U$;
\item $\mathrm{cost}\left(T_{\hat \pi}^{(f,g)}\left(\sigma_{\left(\bar a,\bar b\right)}\right)\right)=
\mathrm{cohcost}_U\left(\sigma_{\left(\bar a,\bar b\right)}\right)=
CD_U(f,g)$;
\item $\left(\hat a,\hat b\right)\in \partial U$ or $\hat a =\frac{1}{2}$.
\end{enumerate}
\end{theorem}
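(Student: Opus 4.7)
The strategy is to lift the problem to a finite-sheeted covering of $U$ on which the transport of a single matching becomes a well-defined continuous function, and then combine compactness with the maximum principle of Theorem~\ref{a<>1/2}.

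\textbf{Choice of $\sigma_{\left(\bar a,\bar b\right)}$ and finite cover.} By Corollary~\ref{corfiniteness} the set $\Sigma_{\left(\bar a,\bar b\right)}^{(f,g)}$ is finite, so Definition~\ref{CD} lets us pick $\sigma_{\left(\bar a,\bar b\right)}$ realising $\mathrm{cohcost}_U(\sigma_{\left(\bar a,\bar b\right)})=CD_U(f,g)$. Since every transport operator preserves the partition of a persistence diagram into proper and improper points (end of Subsection~\ref{spott}), we may additionally choose $\sigma_{\left(\bar a,\bar b\right)}$ with finite cost, which guarantees that every $T^{(f,g)}_\pi(\sigma_{\left(\bar a,\bar b\right)})$ also has finite cost. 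Corollary~\ref{loops} gives a homomorphism $\pi_1(U,\left(\bar a,\bar b\right))\to\mathrm{Sym}(\Sigma_{\left(\bar a,\bar b\right)}^{(f,g)})$; the stabiliser $H$ of $\sigma_{\left(\bar a,\bar b\right)}$ therefore has finite index, and produces a connected finite-sheeted covering $p:U^{\ast}\to U$ with basepoint $\tilde p_0$ above $\left(\bar a,\bar b\right)$. For every $\tilde p\in U^{\ast}$ and every path $\tilde\pi$ in $U^{\ast}$ from $\tilde p_0$ to $\tilde p$, Proposition~\ref{homotopy} makes
$\sigma(\tilde p):=T^{(f,g)}_{p\circ\tilde\pi}(\sigma_{\left(\bar a,\bar b\right)})$
independent of the choice of $\tilde\pi$.

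\textbf{Continuous extension and attainment.} Define $D:U^{\ast}\to[0,\infty)$ by $D(\tilde p):=\mathrm{cost}(\sigma(\tilde p))$; continuity follows from Proposition~\ref{transcontinuity} applied to each matched pair and from the continuity of the distance $d$. The compactness of $\overline U$, together with the assumption that $\partial U$ is a $C^1$-submanifold of $\mathbb{R}^2$, allows us to extend $p$ via the collar neighbourhood theorem to a finite-sheeted continuous surjection $\bar p:\overline{U^{\ast}}\to\overline U$, with $\overline{U^{\ast}}$ compact. Since $\overline U\subseteq {\mathcal P}(\Lambda^+)$, the functions $f^{\ast}_{(a,b)},g^{\ast}_{(a,b)}$ depend continuously on $(a,b)\in\overline U$ in the sup-norm, so the Stability Theorem~\ref{stability} implies that each matched pair of $\sigma(\tilde p)$ has a well-defined limit as $\tilde p\to\tilde q\in\overline{U^{\ast}}$; hence $D$ extends continuously to $\overline{U^{\ast}}$. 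As every $\pi\in\Pi_{\left(\bar a,\bar b\right)}(U)$ lifts to a path in $U^{\ast}$ starting at $\tilde p_0$, Propositions~\ref{composition} and~\ref{homotopy} yield
\[
CD_U(f,g)=\mathrm{cohcost}_U(\sigma_{\left(\bar a,\bar b\right)})=\sup_{\tilde p\in U^{\ast}}D(\tilde p)=\max_{\tilde p\in\overline{U^{\ast}}}D(\tilde p)=:M^{\ast},
\]
attained at some $\tilde p^{\ast}\in\overline{U^{\ast}}$ by compactness.

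\textbf{Locating $\tilde p^{\ast}$ via the maximum principle.} Consider the closed nonempty set $K^{\ast}:=D^{-1}(M^{\ast})\subseteq\overline{U^{\ast}}$. Suppose, for contradiction, that $K^{\ast}\cap(\partial U^{\ast}\cup\bar p^{-1}(\{a=1/2\}))=\emptyset$; then $K^{\ast}\subseteq U^{\ast}$ is compact and disjoint from $\bar p^{-1}(\{a=1/2\})$, so it splits into two compact pieces $K^{\ast}_<\subseteq\bar p^{-1}(\{a<1/2\})$ and $K^{\ast}_>\subseteq\bar p^{-1}(\{a>1/2\})$, at least one of which is nonempty. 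Assume $K^{\ast}_<\neq\emptyset$ (the other case is symmetric). Set $a^{\ast}_+:=\max\{a(\bar p(\tilde p)):\tilde p\in K^{\ast}_<\}<1/2$, attained at $\tilde p_{\ast}$, and write $\left(\hat a,\hat b\right):=\bar p(\tilde p_{\ast})$. Pick an open ball $V\subseteq U$ around $\left(\hat a,\hat b\right)$ which is evenly covered by $p$, and lift it to an open neighbourhood $\tilde V$ of $\tilde p_{\ast}$ in $U^{\ast}$. By Theorem~\ref{a<>1/2}(1) applied to $\sigma(\tilde p_{\ast})$ in $V$ (which has finite cost), there exist $(a',b')\in V$ with $\hat a<a'<1/2$ and a path $\pi'\in\Pi_{\left(\hat a,\hat b\right)\leadsto(a',b')}(V)$ satisfying $\mathrm{cost}(T^{(f,g)}_{\pi'}(\sigma(\tilde p_{\ast})))\ge M^{\ast}$. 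Lifting $\pi'$ to $\tilde V$ starting at $\tilde p_{\ast}$ gives $\tilde p'\in U^{\ast}$ and, by Proposition~\ref{composition}, $\sigma(\tilde p')=T^{(f,g)}_{\pi'}(\sigma(\tilde p_{\ast}))$; thus $D(\tilde p')\ge M^{\ast}$, forcing $D(\tilde p')=M^{\ast}$ and $\tilde p'\in K^{\ast}_<$ with $a(\bar p(\tilde p'))>a^{\ast}_+$, contradicting the maximality of $a^{\ast}_+$. The case $K^{\ast}_>\neq\emptyset$ is handled identically with Theorem~\ref{a<>1/2}(2). Therefore $K^{\ast}$ must meet $\partial U^{\ast}\cup\bar p^{-1}(\{a=1/2\})$, and we may select $\tilde p^{\ast}$ in this intersection. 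Setting $\left(\hat a,\hat b\right):=\bar p(\tilde p^{\ast})$ and letting $\hat\pi:[0,1]\to\overline U$ be the projection under $\bar p$ of any path in $\overline{U^{\ast}}$ from $\tilde p_0$ to $\tilde p^{\ast}$ whose restriction to $[0,1[$ lies in $U^{\ast}$ (which exists by the collar description of $\overline{U^{\ast}}$ at $\partial U^{\ast}$) yields the three required properties. The main technical hurdle is the construction and continuous extension to $\overline U$ of both the finite cover and the function $D$; once this is in place, the rest of the argument is a compactness argument coupled with Theorem~\ref{a<>1/2} to force the maximum onto the boundary or onto the axis $a=1/2$.
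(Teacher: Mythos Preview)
Your covering-space approach is genuinely different from the paper's and, once the technical details are in place, arguably cleaner. The paper never builds a cover: it takes a sequence of paths $\pi_i$ with $\mathrm{cost}\bigl(T^{(f,g)}_{\pi_i}(\sigma_{(\bar a,\bar b)})\bigr)\to\mathrm{cohcost}_U(\sigma_{(\bar a,\bar b)})$, extracts a subsequence of endpoints converging to some $(a^\sharp,b^\sharp)\in\overline U$, and prolongs each $\pi_i$ slightly so that all paths end at $(a^\sharp,b^\sharp)$; finiteness of $\Sigma^{(f,g)}_{(a^\sharp,b^\sharp)}$ then forces the cost to actually equal the supremum along a subsequence. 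The set $Z$ of points where the supremum is attained is shown to be nonempty and compact by repeating this argument, and one picks in $Z$ the point closest to the line $a=\tfrac12$; Theorem~\ref{a<>1/2} then gives the contradiction exactly as you do. Your lift to $U^\ast$ replaces the two sequence extractions by a single continuity-plus-compactness step, which is a real simplification.

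There is one technical point you gloss over that the paper handles explicitly. To make the transport (and hence $D$) continuous up to $\partial U$ you invoke only the Stability Theorem, but Propositions~\ref{uniquePath} and~\ref{transcontinuity} require the path to lie in an open set on which the $2c$-separation of diagram points holds. The paper deals with this by choosing an auxiliary open $U'$ with $\overline U\subseteq U'\subseteq\mathcal P(\Lambda^+)$ and $f,g\in\mathcal F_{U',c'}$ for some $c'>0$ (this exists because the distance from $U$ to $\mathrm{Sing}(f)\cup\mathrm{Sing}(g)$ is positive), and applies Proposition~\ref{transcontinuity} inside $U'$. You need the same device both to extend $D$ continuously to $\overline{U^\ast}$ and to justify that your compactified cover $\bar p:\overline{U^\ast}\to\overline U$ carries a well-defined transported matching at boundary points. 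With that addition your argument goes through.
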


\begin{proof}
Let $\sigma_{\left(\bar a,\bar b\right)}\in\Sigma_{\left(\bar a,\bar b\right)}^{(f,g)}$ with
$CD_U(f,g) = {\mathrm{cohcost}_U}\left(\sigma_{\left(\bar a,\bar b\right)}\right)$.
For every $i\in \N$, let us consider a pair $(a_i,b_i)\in U$ and a continuous path $\pi_i\in \Pi_{\left(\bar a,\bar b\right)\leadsto (a_i,b_i)}(U)$, such that
$\lim_{i\to \infty}
\mathrm{cost}\left(T_{\pi_i}^{(f,g)}\left(\sigma_{\left(\bar a,\bar b\right)}\right)\right)={\mathrm{cohcost}_U}\left(\sigma_{\left(\bar a,\bar b\right)}\right)$.
Since $\overline{U}$ is compact, we can find a subsequence of $\left((a_i,b_i)\right)$ converging to a point $\left(a^\sharp,b^\sharp\right)\in\overline{U}$.
Because of the assumption that $\partial U$ is a $C^1$-submanifold of $\mathbb{R}^2$, for every index $i$ we can find
a continuous path $\pi'_i:[0,1]\to \overline{U}$ from $(a_i,b_i)$ to $\left(a^\sharp,b^\sharp\right)$ with $\pi'_i\left([0,1[\right)\subseteq U$, such that the maximum distance between the points in the set $\pi'_i\left([0,1]\right)$ and the point $\left(a^\sharp,b^\sharp\right)$ tends to $0$ when $i$ tends to infinity.
Let us now take another open set $U'\subseteq {\mathcal {P}}(\Lambda^+)$ and a $c'>0$ such that $f,g\in \mathcal{F}_{U',c'}$ and $\overline{U}\subseteq U'\subseteq\overline{U'}\subseteq{\mathcal {P}}(\Lambda^+)$, where $\overline{U'}$ is the closure of $U'$ in $\mathbb{R}^2$.
The uniform continuity of the transport along a path in $U'$ with respect to changes in the path (Proposition~\ref{transcontinuity}) implies that
$\lim_{i\to \infty} \left|\mathrm{cost}\left(T_{\pi_i\ast\pi'_i}^{(f,g)}\left(\sigma_{\left(\bar a,\bar b\right)}\right)\right)-
\mathrm{cost}\left(T_{\pi_i}^{(f,g)}\left(\sigma_{\left(\bar a,\bar b\right)}\right)\right)\right|=0$.
Hence, the equality $\lim_{i\to \infty} \mathrm{cost}\left(T_{\pi_i\ast\pi'_i}^{(f,g)}\left(\sigma_{\left(\bar a,\bar b\right)}\right)\right)=
{\mathrm{cohcost}_U}\left(\sigma_{\left(\bar a,\bar b\right)}\right)$ holds.
Since the set $\Sigma_{\left(a^\sharp,b^\sharp\right)}^{(f,g)}$ is finite,
by possibly extracting a subsequence we can assume that all the matchings
$T_{\pi_i\ast\pi'_i}^{(f,g)}\left(\sigma_{\left(\bar a,\bar b\right)}\right)$ coincide.
It follows that for every index $i$ the path $\pi_i\ast\pi'_i$ is a continuous path in $\overline{U}$ from $\left(\bar a,\bar b\right)$ to $\left(a^\sharp,b^\sharp\right)$ with $\pi_i\ast\pi'_i([0,1[)\subseteq U$, such that
$\mathrm{cost}\left(T_{\pi_i\ast\pi'_i}^{(f,g)}\left(\sigma_{\left(\bar a,\bar b\right)}\right)\right)={\mathrm{cohcost}_U}\left(\sigma_{\left(\bar a,\bar b\right)}\right)$.

Let us now consider the set $Z$ of all pairs $(a,b)\in \overline{U}$ for which
a continuous path $\pi:[0,1]\to\overline{U}$ from $\left(\bar a,\bar b\right)$ to $(a,b)$ exists, such that $\pi\left([0,1[\right)\subseteq U$ and $\mathrm{cost}\left(T_{\pi}^{(f,g)}\left(\sigma_{\left(\bar a,\bar b\right)}\right)\right)={\mathrm{cohcost}_U}\left(\sigma_{\left(\bar a,\bar b\right)}\right)$.
We have just seen that $\left(a^\sharp,b^\sharp\right)\in Z$, so that $Z\neq\emptyset$.
Once again because of the assumption that $\partial U$ is a $C^1$-submanifold of $\mathbb{R}^2$ and Proposition~\ref{transcontinuity}, $Z$ is compact.
This can be proved by means of the same reasoning we have previously seen. Indeed, let us take a sequence $\left(\left(a^\sharp_i,b^\sharp_i\right)\right)$ in $Z$ and a sequence $\left(\pi^\sharp_i\right)$ of continuous paths $\pi^\sharp_i:[0,1]\to\overline{U}$ from $\left(\bar a,\bar b\right)$ to $\left(a^\sharp_i,b^\sharp_i\right)$, such that $\pi^\sharp_i\left([0,1[\right)\subseteq U$ and $\mathrm{cost}\left(T_{\pi^\sharp_i}^{(f,g)}\left(\sigma_{\left(\bar a,\bar b\right)}\right)\right)={\mathrm{cohcost}_U}\left(\sigma_{\left(\bar a,\bar b\right)}\right)$.
Since $\overline{U}$ is compact, we can assume that the sequence $\left(\left(a^\sharp_i,b^\sharp_i\right)\right)$ converges to a point $\left(a^\star,b^\star\right)\in \overline{U}$.
Starting from the previous sequences, by applying Proposition~\ref{transcontinuity} to the set $U'$ and recalling
the assumption that $\partial U$ is a $C^1$-submanifold of $\mathbb{R}^2$
it is easy to construct a sequence $\left(\pi^\flat_i\right)$ of continuous paths $\pi^\flat_i:[0,1]\to\overline{U}$ from $\left(\bar a,\bar b\right)$ to $\left(a^\star,b^\star\right)$, such that $\pi^\flat_i\left([0,1[\right)\subseteq U$, and $\lim_{i\to\infty}\mathrm{cost}\left(T_{\pi^\flat_i}^{(f,g)}\left(\sigma_{\left(\bar a,\bar b\right)}\right)\right)={\mathrm{cohcost}_U}\left(\sigma_{\left(\bar a,\bar b\right)}\right)$.
Since the set $\Sigma_{\left(a^\star,b^\star\right)}^{(f,g)}$ is finite,
by possibly extracting a subsequence from $\left(\pi^\flat_i\right)$ we can assume that all the matchings
$T_{\pi^\flat_i}^{(f,g)}\left(\sigma_{\left(\bar a,\bar b\right)}\right)$ coincide.
It follows that
$\mathrm{cost}\left(T_{\pi^\flat_i}^{(f,g)}\left(\sigma_{\left(\bar a,\bar b\right)}\right)\right)={\mathrm{cohcost}_U}\left(\sigma_{\left(\bar a,\bar b\right)}\right)$ for every index $i$.
Therefore, $\left(a^\star,b^\star\right)\in Z$ and hence $Z$ is compact.

Let us now take a pair $\left(\hat a,\hat b\right)\in Z$ that minimizes the distance from the line $a=\frac{1}{2}$,
and a continuous path $\hat \pi$ in $\overline{U}$ from $\left(\bar a,\bar b\right)$ to $\left(\hat a,\hat b\right)$
such that $\hat \pi([0,1[)\subseteq U$ and $\mathrm{cost}\left(T_{\hat\pi}^{(f,g)}\left(\sigma_{\left(\bar a,\bar b\right)}\right)\right)={\mathrm{cohcost}_U}\left(\sigma_{\left(\bar a,\bar b\right)}\right)$.
If $\left(\hat a, \hat b\right)\in\partial U$, our proof is concluded.
If $\left(\hat a,\hat b\right)\in U$ and $\hat a \neq \frac{1}{2}$, Theorem~\ref{a<>1/2} implies
the existence of a pair $(a',b')\in U$ with $\left|a'-\frac{1}{2}\right|<\left|\hat a-\frac{1}{2}\right|$ and a continuous path $\pi'\in \Pi_{\left(\hat a,\hat b\right)\leadsto (a',b')}(U)$
such that
${\mathrm{cohcost}_U}\left(\sigma_{\left(\bar a,\bar b\right)}\right)=\mathrm{cost}\left(T_{\hat\pi}^{(f,g)}\left(\sigma_{\left(\bar a,\bar b\right)}\right)\right) \le \mathrm{cost}\left(T_{\pi'}^{(f,g)}\left(T_{\hat \pi}^{(f,g)}\left(\sigma_{\left(\bar a,\bar b\right)}\right)\right)\right)$.
Since $\hat \pi\ast\pi'\in \Pi_{\left(\bar a,\bar b\right)\leadsto (a',b')}(U)$,
$\mathrm{cost}\left(T_{\pi'}^{(f,g)}\left(T_{\hat\pi}^{(f,g)}\left(\sigma_{\left(\bar a,\bar b\right)}\right)\right)\right)\le{\mathrm{cohcost}_U}\left(\sigma_{\left(\bar a,\bar b\right)}\right)$ because of the definition of $\mathrm{cohcost}_U$, and hence
$\mathrm{cost}\left(T_{\pi'}^{(f,g)}\left(T_{\hat\pi}^{(f,g)}\left(\sigma_{\left(\bar a,\bar b\right)}\right)\right)\right)=
{\mathrm{cohcost}_U}\left(\sigma_{\left(\bar a,\bar b\right)}\right)$. The\-re\-fore $(a',b')\in Z$ and the distance of
$(a',b')$ from the line $a=\frac{1}{2}$ is strictly less than the distance of
$\left(\hat a,\hat b\right)$ from the same line, against the choice of $\left(\hat a,\hat b\right)$. Hence, if $\left(\hat a,\hat b\right)\in U$ then the equality $\hat a =\frac{1}{2}$ must hold.
\end{proof}

\begin{remark}\label{finalremark}
After proving the maximum principle for the coherent transport, one could think that the use of the 2-dimensional parameter space ${\mathcal {P}}(\Lambda^+)$ is useless, and that the study of 2D persistence diagrams should be introduced by means of a 1-dimensional parameter space from the very beginning, only taking account of lines of slope $1$. This opinion is not correct, because the use of a 1-dimensional parameter space would hide the phenomenon of monodromy, since in a 1-dimensional space it would not be possible to turn around a singular point.
\end{remark}

\section{Relation between the coherent matching distance and the classical matching distance}
\label{relCUDmatch}

In this section we want to explore some relations between the coherent matching distance and the classical matching distance.
We start by stating the following simple result. 

\begin{proposition}
\label{simpleprop}
Let $c>0$.
Let $\mathcal{U}:=\{U_i\}$ be a finite family of open and connected subsets of ${\mathcal {P}}(\Lambda^+)$. Then the function $CD_{\mathcal{U}}:=\max _iCD_{U_i}$ is a pseudo-metric on the set $\bigcap_i \mathcal{F}_{U_i,c}$.
\end{proposition}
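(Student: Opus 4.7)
The plan is to reduce the statement to the already-established Proposition~\ref{prop_CDU_pd}, which asserts that each $CD_{U_i}$ is a pseudo-distance on $\mathcal{F}_{U_i,c}$, and then invoke the elementary fact that the pointwise maximum of finitely many pseudo-metrics on a common domain is again a pseudo-metric. So the very first step is to observe that for every pair $f,g\in\bigcap_i\mathcal{F}_{U_i,c}$ and every index $i$, the quantity $CD_{U_i}(f,g)$ is defined (because $f,g\in\mathcal{F}_{U_i,c}$) and nonnegative, so $CD_{\mathcal{U}}(f,g)=\max_i CD_{U_i}(f,g)$ is a well-defined nonnegative real number; the finiteness of $\mathcal{U}$ is what lets us use a maximum rather than a supremum.

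Next I would verify the three defining properties in turn, each time exploiting the corresponding property of the individual $CD_{U_i}$. For reflexivity, $CD_{\mathcal{U}}(f,f)=\max_i CD_{U_i}(f,f)=\max_i 0=0$ by Proposition~\ref{prop_CDU_pd}. For symmetry, $CD_{\mathcal{U}}(f,g)=\max_i CD_{U_i}(f,g)=\max_i CD_{U_i}(g,f)=CD_{\mathcal{U}}(g,f)$, again by the symmetry of each $CD_{U_i}$. For the triangle inequality, given $f,g,h\in\bigcap_i\mathcal{F}_{U_i,c}$, apply the triangle inequality of each $CD_{U_i}$ componentwise to get $CD_{U_i}(f,h)\le CD_{U_i}(f,g)+CD_{U_i}(g,h)$ for every $i$, then take the maximum on the left and use the subadditivity of the maximum over a sum,
\[
\max_i\bigl(CD_{U_i}(f,g)+CD_{U_i}(g,h)\bigr)\le \max_i CD_{U_i}(f,g)+\max_i CD_{U_i}(g,h),
\]
to conclude $CD_{\mathcal{U}}(f,h)\le CD_{\mathcal{U}}(f,g)+CD_{\mathcal{U}}(g,h)$.

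There is essentially no obstacle here: the only non-routine point is to make sure every term is defined, which is exactly what the intersection $\bigcap_i\mathcal{F}_{U_i,c}$ guarantees, and to note that we only obtain a \emph{pseudo}-metric (not a metric) because each $CD_{U_i}$ is itself only a pseudo-distance, so $CD_{\mathcal{U}}(f,g)=0$ does not in general force $f=g$.
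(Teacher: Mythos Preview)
Your proof is correct and follows exactly the same approach as the paper, which simply remarks that the result is a trivial consequence of the fact that the maximum of pseudo-distances is a pseudo-distance. Your version just spells out the verification of the three axioms explicitly.
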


\begin{proof}
It is a trivial consequence of the fact that the maximum of pseudo-distances is a pseudo-distance.
\end{proof}

For every finite family $\mathcal{U}:=\{U_i\}$ of disjoint open and connected subsets of ${\mathcal {P}}(\Lambda^+)$, the classical counterpart of the pseudo-metric $CD_{\mathcal{U}}$
is the pseudo-metric ${D_{\mathrm{match},}}_\mathcal{U}$ defined by setting
$$
{D_{\mathrm{match},}}_\mathcal{U}(f,g)=\sup_{(a,b)\in \bigcup_i U_i}d_B\left(\dgm\left(f_{(a,b)}^*\right),\dgm\left(g_{(a,b)}^*\right)\right)
.$$

The following statement holds.

\begin{proposition}
\label{relationCDDmatch}
Let $c>0$.
Let $\mathcal{U}:=\{U_i\}$ be a finite family of disjoint open and connected subsets of ${\mathcal {P}}(\Lambda^+)$.
If $f,g\in \bigcap_i \mathcal{F}_{U_i,c}$, then ${D_{\mathrm{match},}}_\mathcal{U}(f,g)\le CD_{\mathcal{U}}(f,g)$.
\end{proposition}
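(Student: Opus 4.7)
The plan is to show, for each index $i$ and each $(a,b)\in U_i$, the pointwise inequality
\[
d_B\!\left(\dgm\!\left(f^*_{(a,b)}\right),\dgm\!\left(g^*_{(a,b)}\right)\right)\le CD_{U_i}(f,g),
\]
and then to take the supremum over $(a,b)\in\bigcup_i U_i$ and the maximum over $i$.

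First I would exploit the constant path $\pi_0:[0,1]\to U_i$ defined by $\pi_0(t):=(a,b)$ for all $t$, which belongs to $\Pi_{(a,b)}(U_i)$. By the uniqueness part of Proposition~\ref{uniquePath}, the induced path starting at any $X\in \dgm\!\left(f^*_{(a,b)}\right)$ is the constant path at $X$, so $T^f_{\pi_0}$ and $T^g_{\pi_0}$ are both identity maps. Hence, by the definition of transported matching (Section~\ref{deftm}),
\[
T^{(f,g)}_{\pi_0}\!\left(\sigma_{(a,b)}\right)=T^g_{\pi_0}\circ \sigma_{(a,b)}\circ \left(T^f_{\pi_0}\right)^{-1}=\sigma_{(a,b)}
\]
for every $\sigma_{(a,b)}\in\Sigma_{(a,b)}^{(f,g)}$.

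Next, I would use Definition~\ref{cohcost} to conclude
\[
\mathrm{cohcost}_{U_i}\!\left(\sigma_{(a,b)}\right)\ge \mathrm{cost}\!\left(T^{(f,g)}_{\pi_0}\!\left(\sigma_{(a,b)}\right)\right)=\mathrm{cost}\!\left(\sigma_{(a,b)}\right),
\]
and then take the minimum over $\sigma_{(a,b)}\in\Sigma_{(a,b)}^{(f,g)}$: since $\Sigma_{(a,b)}^{(f,g)}$ is finite (Corollary~\ref{corfiniteness}), Definition~\ref{CD} gives
\[
CD_{U_i}(f,g)=\min_{\sigma_{(a,b)}}\mathrm{cohcost}_{U_i}\!\left(\sigma_{(a,b)}\right)\ge \min_{\sigma_{(a,b)}}\mathrm{cost}\!\left(\sigma_{(a,b)}\right)=d_B\!\left(\dgm\!\left(f^*_{(a,b)}\right),\dgm\!\left(g^*_{(a,b)}\right)\right).
\]
By Proposition~\ref{prop_CDU_dnd} the left-hand side is independent of the basepoint $(a,b)\in U_i$, so taking the supremum over $(a,b)\in U_i$ yields $CD_{U_i}(f,g)\ge \sup_{(a,b)\in U_i}d_B\!\left(\dgm\!\left(f^*_{(a,b)}\right),\dgm\!\left(g^*_{(a,b)}\right)\right)$. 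Finally, taking the maximum over $i$ gives $CD_{\mathcal{U}}(f,g)\ge D_{\mathrm{match},\mathcal{U}}(f,g)$.

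There is essentially no hard step here: the whole argument reduces to the observation that the constant path at $(a,b)$ is admissible and transports matchings trivially, so the coherent cost dominates the ordinary cost. The disjointness hypothesis on $\mathcal{U}$ plays no role in the inequality itself, beyond ensuring that $CD_{\mathcal{U}}$ is well-defined as in Proposition~\ref{simpleprop}.
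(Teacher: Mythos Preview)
Your proof is correct and follows essentially the same approach as the paper: both hinge on the inequality $\mathrm{cohcost}_{U_i}(\sigma)\ge\mathrm{cost}(\sigma)$, which you justify explicitly via the constant path while the paper simply cites Definition~\ref{cohcost}. The only cosmetic difference is that the paper wraps this in a contradiction argument, whereas you proceed directly; your version is arguably cleaner.
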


\begin{proof}
Let us assume by contradiction that ${D_{\mathrm{match},}}_\mathcal{U}(f,g)> CD_{\mathcal{U}}(f,g)$. By definition of ${D_{\mathrm{match},}}_\mathcal{U}(f,g)$ we can find a real number $\epsilon>0$, an index $j$ and a point $(a,b)\in U_j$ such that $\mathrm{cost}(\sigma) \geq CD_{U_j}(f,g)+\epsilon$ for any matching $\sigma$ between $\dgm\left(f_{(a,b)}^*\right)$ and $\dgm\left(g_{(a,b)}^*\right)$. On the other hand, by Definition~\ref{cohcost} we have that ${\mathrm{cohcost}_{U_j}}\left(\sigma\right) \geq \mathrm{cost}(\sigma)$ for any such $\sigma$, implying that  $CD_{U_j}(f,g) \geq CD_{U_j}(f,g)+\epsilon$, thus getting a contradiction.
\end{proof}

\section{Conclusions}
In this paper we have presented a new theoretical framework for metric comparison in 2D persistent homology. In particular, we have illustrated the concept of coherent matching distance and studied some of its properties. In order to do that, we have also introduced the concepts of extended Pareto grid and transport of a matching, and we have shown their use to manage the phenomenon of monodromy. Finally, we have proved some theorems that make clear the importance of filtrations associated with lines of slope $1$ in 2D persistent homology.

In our opinion, many problems should deserve further research. First of all, it would be interesting to extend the presented concepts to filtering functions taking values in $\R^m$ with $m>2$. Secondly, the genericity of our assumptions concerning the extended Pareto grid should possibly be proved. Thirdly, the relation between the classical multidimensional matching distance $D_{\mathrm{match}}$ and the coherent matching distance $CD_U$ could be investigated further. Finally, methods for the efficient computation of the coherent matching distance should be developed.

We plan to devote further papers to these topics.

\section*{Acknowledgement}
Work carried out under the auspices of INdAM-GNSAGA. M.E. has been partially supported by the Toposys project FP7-ICT-318493-STREP, as well as an ESF Short Visit grant under the Applied and Computational Algebraic Topology networking programme. A.C. is partially supported by the FP7 Integrated Project IQmulus, FP7-ICT-2011–318787, and the H2020 Project Gravitate, H2020 - REFLECTIVE - 7 - 2014 - 665155. The authors are grateful to Claudia Landi for her valuable advice. This paper is dedicated to the memory of Ola Mouaffek Shihab Eddin, Naya Raslan and Abish Masih.

\bibliographystyle{amsplain}

\bibliography{amp_CD_bib}

\end{document}